\documentclass[a4paper, 11pt, DIV=12]{scrartcl}
\setlength{\emergencystretch}{1em}

%packages%%%%%%%%%%%%%%%%%%%%%%%%%%%%%%%%%%%%%%%%%%%%%%%%%%%%%%%%%%%%
%%%%%%%%%%%%%%%%%%%%%%%%%%%%%%%%%%%%%%%%%%%%%%%%%%%%%%%%%%%%%%%%%%%%%
\usepackage[utf8]{inputenc}
\usepackage[T1]{fontenc}
\usepackage[english]{babel}
\usepackage{lmodern}
\usepackage{graphicx}
\usepackage{xcolor}
\usepackage{booktabs}
\usepackage[shortlabels]{enumitem}
\usepackage{amsmath}
\usepackage{amsfonts}
\usepackage{amssymb}
\usepackage{amsthm}
\usepackage{bm}

\usepackage{comment}
\usepackage{tikz}
\usetikzlibrary{calc,shapes,decorations.pathmorphing, decorations.markings, patterns,backgrounds}

\usepackage[colorlinks=false, pdfborder={0 0 0}, pdfborderstyle={}]{hyperref}
\usepackage[nameinlink,capitalise,noabbrev]{cleveref}
\crefname{equation}{}{}

%newtheorems%%%%%%%%%%%%%%%%%%%%%%%%%%%%%%%%%%%%%%%%%%%%%%%%%%%%%%%
%%%%%%%%%%%%%%%%%%%%%%%%%%%%%%%%%%%%%%%%%%%%%%%%%%%%%%%%%%%%%%%%%%%
\numberwithin{equation}{section}

%line numbering%%%%%%%%%%%%%%%%%%%%%%%%%%%%%%%%%%%%%%%%%%%%%%%%%%%%%%%%
%%%%%%%%%%%%%%%%%%%%%%%%%%%%%%%%%%%%%%%%%%%%%%%%%%%%%%%%%%%%%%%%
\usepackage{lineno}
%\linenumbers
\newcommand*\patchAmsMathEnvironmentForLineno[1]{
  \expandafter\let\csname old#1\expandafter\endcsname\csname #1\endcsname
  \expandafter\let\csname oldend#1\expandafter\endcsname\csname end#1\endcsname
  \renewenvironment{#1}
  {\linenomath\csname old#1\endcsname}
  {\csname oldend#1\endcsname\endlinenomath}}
  \newcommand*\patchBothAmsMathEnvironmentsForLineno[1]{
  \patchAmsMathEnvironmentForLineno{#1}
  \patchAmsMathEnvironmentForLineno{#1*}}
  \AtBeginDocument{
  \patchBothAmsMathEnvironmentsForLineno{equation}
  \patchBothAmsMathEnvironmentsForLineno{align}
  \patchBothAmsMathEnvironmentsForLineno{flalign}
  \patchBothAmsMathEnvironmentsForLineno{alignat}
  \patchBothAmsMathEnvironmentsForLineno{gather}
  \patchBothAmsMathEnvironmentsForLineno{multline}
}

\theoremstyle{definition}

\crefname{question}{Question}{Questions}

\newtheorem{defi}{Definition}[section]
\crefname{defi}{Definition}{Definitions}

\crefname{ex}{Example}{Examples}

\newtheorem{alg}{Algorithm}
\crefname{alg}{Algorithm}{Algorithms}

\theoremstyle{plain}
\newtheorem{thm}[defi]{Theorem}
\crefname{thm}{Theorem}{Theorems}

\newtheorem{step}{Step}
\crefname{step}{Step}{Steps}

\newtheorem{conj}{Conjecture}[section]
\crefname{conj}{Conjecture}{Conjectures}

\newtheorem{lemma}[defi]{Lemma}
\crefname{lemma}{Lemma}{Lemmas}

\crefname{cor}{Corollary}{Corollaries}

\newtheorem{claim}{Claim}
\crefname{claim}{Claim}{Claims}

\newtheorem{prop}[defi]{Proposition}
\crefname{prop}{Proposition}{Propositions}

\crefname{obs}{Observation}{Observations}

\theoremstyle{remark}

\crefname{rmk}{Remark}{Remarks}

%newcommands%%%%%%%%%%%%%%%%%%%%%%%%%%%%%%%%%%%%%%%%%%%%%%%%%%%%%%%%%%
%%%%%%%%%%%%%%%%%%%%%%%%%%%%%%%%%%%%%%%%%%%%%%%%%%%%%%%%%%%%%%%%%%%%%%
\newcommand{\e}{\varepsilon}
\newcommand{\si}{\sigma}
\newcommand{\al}{\alpha}
\newcommand{\be}{\beta}
\newcommand{\de}{\delta}
\newcommand{\De}{\Delta}
\newcommand{\ga}{\gamma}

\newcommand{\N}{\mathbb{N}}

\newcommand{\cA}{\mathcal{A}}
\newcommand{\bA}{\mathbf{A}}
\newcommand{\cB}{\mathcal{B}}
\newcommand{\cC}{\mathcal{C}}
\newcommand{\cD}{\mathcal{D}}

\newcommand{\cF}{\mathcal{F}}

\newcommand{\cH}{\mathcal{H}}

\newcommand{\cJ}{\mathcal{J}}
\newcommand{\cM}{\mathcal{M}}
\newcommand{\cP}{\mathcal{P}}
\newcommand{\cQ}{\mathcal{Q}}
\newcommand{\cR}{\mathcal{R}}
\newcommand{\bR}{\mathbf{R}}

\newcommand{\card}[1]{\left| #1 \right|}
\DeclareMathOperator{\exn}{ex}

\author{Jan Corsten\thanks{Department of Mathematics, LSE, London WC2A 2AE, Email: \texttt{j.corsten@lse.ac.uk}.} \and Tuan Tran\thanks{Discrete Mathematics Group, Institute for Basic Science (IBS), Daejeon, Republic of Korea. E-mail: \texttt{tuantran@ibs.re.kr}. Supported by the Institute for Basic Science (IBS-R029-Y1). Much of the work was done while supported by the Czech Science Foundation, grant number GJ16-07822Y, and with institutional support RVO:67985807, while affiliated with the Institute of Computer Science, Czech Academy of Sciences.}
	}
\date{}

\title{\LARGE Balanced supersaturation for some degenerate hypergraphs}

\begin{document}
\maketitle

\begin{abstract}
  A classical theorem of Simonovits from the 1980s asserts that every graph $G$ satisfying ${e(G) \gg v(G)^{1+1/k}}$ must contain $\gtrsim \left(\frac{e(G)}{v(G)}\right)^{2k}$ copies of $C_{2k}$. Recently, Morris and Saxton established a {\em balanced} version of Simonovits' theorem, showing that such $G$ has $\gtrsim \left(\frac{e(G)}{v(G)}\right)^{2k}$ copies of $C_{2k}$, which are `uniformly distributed' over the edges of $G$. Moreover, they used this result to obtain a sharp bound on the number of $C_{2k}$-free graphs via the method of {\em hypergraph containers}. In this paper, we generalise Morris--Saxton's results for even cycles to $\Theta$-graphs. We also prove analogous results for complete $r$-partite $r$-graphs.
  
\noindent \textbf{Keywords:} Erd\H{o}s-Simonovits conjecture, balanced supersaturation, theta graph, complete $r$-partite $r$-graph, hypergraph containers.
\end{abstract}

\section{Introduction}
\subsection{Supersaturation theorems}
The \emph{Tur\'an number} $\exn_r(n,H)$ of an
$r$-graph $H$ is the maximum number of edges in an $n$-vertex $r$-graph which does not contain a copy of $H$.
The Erd\H os-Stone-Simonovits theorem \cite{Erdos1946,Erdos1965} asserts that
\[
\exn_2(n,H) = \left( 1 - \frac{1}{\chi(H) -1} \right)\binom{n}{2} + o(n^2)
\]
for every graph $H$ and therefore asymptotically determines the Tur\'an number of every non-bipartite graph $H$. For bipartite graphs, finding the Tur\'an number is usually very challenging and even their order of magnitude is unknown for most of them. Erd\H os \cite{Erdos-compl-upperbound} further proved that $\exn_r(n,H) = o(n^r)$ if and only if $H$ is an $r$-partite $r$-graph. Similarly as for graphs, not much is known for the Tur\'an number of $r$-partite $r$-graphs.
It is natural to ask now how many copies of $H$ a graph on $n$ vertices with more than $\exn_2(n,H)$ edges must contain. Erd\H os and Simonovits \cite{Erdos1983} observed that for
non-$r$-partite $r$-graphs a simple double-counting argument shows that once we pass the extremal number, we can already find a constant fraction of all copies of $H$ in the complete graph.
This fails in general for $r$-partite $r$-graphs with $r\ge 3$ (see \cite{Sidorenko93}), but Erd\H os and Simonovits \cite{ErdosSimonovits} conjectured that in the graph case (i.e. $r=2$) one can always find a constant fraction of the expected number of copies of $H$ in the random graph with the same number of edges. 

\begin{conj}\label{conj:supersat}
  For every bipartite graph $H$ with $v$ vertices and $e$ edges, there is some $C>0$ so that every graph $G$ with $n$ vertices and $m \geq C \cdot \exn_2 (n,H)$ edges contains
  $\Omega(m^e n^{v-2e})$ copies of $H$.
\end{conj}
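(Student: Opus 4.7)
The target bound $m^e n^{v-2e}$ equals, up to constants, the expected number of labelled copies of $H$ in the random graph $G(n,p)$ with $p\asymp m/n^2$, so my plan is to show that every dense host graph matches this ``random'' count.

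The first step is to pass to a near-regular subgraph $G'\subseteq G$ with $e(G')=\Omega(m)$ in which every vertex has degree $\Theta(d)$, where $d=m/n$. This can be achieved by iteratively deleting vertices of degree below $d/2$ and then dyadically grouping the remaining vertices by degree; the hypothesis $m\ge C\cdot\mathrm{ex}_2(n,H)$ ensures that the cleaned-up graph is still dense enough to force copies of $H$ and that constants lost in regularization remain affordable.

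Next, I would fix an ordering $v_1,\dots,v_v$ of $V(H)$ and for each $i\ge 2$ let $S_i\subseteq\{v_1,\dots,v_{i-1}\}$ be the set of back-neighbours of $v_i$, with $|S_i|=k_i$ and $\sum k_i=e$. An embedding $\phi:V(H)\hookrightarrow V(G')$ is then built greedily: having fixed $\phi(v_1),\dots,\phi(v_{i-1})$, we must place $\phi(v_i)$ in the common neighbourhood $N_{G'}(\phi(S_i))$. If the sizes of these common neighbourhoods matched their averages, Jensen's inequality applied to $\sum_u \binom{d(u)}{k_i}$ would produce about $n(d/n)^{k_i}$ choices at step $i$, and the product would telescope to
\[
n\cdot\prod_{i\ge 2}\frac{d^{k_i}}{n^{k_i-1}} \;=\; n^v\left(\frac{d}{n}\right)^e \;=\; \Omega\bigl(m^e n^{v-2e}\bigr),
\]
exactly the required count.

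The main obstacle---and the reason this conjecture remains open in general---is that at any given step the actual common neighbourhood $|N(\phi(S_i))|$ may be far below its average, even after regularization: many partial embeddings contribute negligibly while a few dominate. One must restrict to a sub-collection of partial embeddings on which \emph{all} extension counts stay close to their means simultaneously. Two standard mechanisms for this are \emph{dependent random choice}, which produces sets with uniformly large common neighbourhoods provided $G$ excludes a suitable $K_{s,t}$, and the iterative cleaning / container-style strategy of Morris and Saxton, extended in the present paper to $\Theta$-graphs and complete $r$-partite $r$-graphs. Designing such a clean-up that works uniformly for every bipartite $H$ is where the real difficulty lies, and explains why progress on the conjecture has so far been a case-by-case affair tracking specific families like even cycles, $\Theta$-graphs, and $K_{s,t}$ with $s$ small.
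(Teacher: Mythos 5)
The statement you were asked to prove is \cref{conj:supersat}, which the paper itself presents as an open conjecture of Erd\H{o}s and Simonovits, not as a theorem: the paper has no proof of it, and only confirms special cases (theta graphs in \cref{thm:supersat-normal-theta} and complete $r$-partite $r$-graphs in \cref{thm:supersat-normal-compl}, both derived from the balanced supersaturation \cref{thm:supersat-theta,thm:supersat-compl}). So there is nothing in the paper to compare against line by line; the correct reading of your submission is as a heuristic, and to your credit you say so explicitly.

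Your sketch is the standard ``random-like count'' heuristic and the gap you flag is exactly the one that makes the conjecture hard, but it is worth being precise about why the Jensen/telescoping step is not merely an imprecision but a real failure mode. Regularising to a near-$d$-regular $G'$ controls ordinary degrees, not common neighbourhoods of $k_i$-sets; Jensen applied to $\sum_u\binom{d_{G'}(u)}{k_i}$ only lower-bounds the \emph{number of $(k_i{+}1)$-stars} in $G'$, not the quantity you need, which is the number of partial embeddings whose image of $S_i$ has a large common neighbourhood. These can differ by polynomial factors: a dense $G'$ can pack almost all of its $k_i$-stars into a small vertex set, leaving generic $k_i$-subsets with essentially empty common neighbourhood, and the greedy extension then produces far fewer than $n(d/n)^{k_i}$ choices at step $i$. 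Dependent random choice fixes this only under structural hypotheses (e.g.\ $K_{s,t}$-freeness with matching Tur\'an bounds), and the Morris--Saxton machinery, which the paper extends, works by building the copies of $H$ one at a time and tracking a delicate ``saturated set'' bookkeeping rather than by a one-shot counting argument; it is engineered for specific families ($C_{2\ell}$, $\theta_{a,b}$, $K^{(r)}_{a_1,\dots,a_r}$) precisely because no uniform clean-up over all bipartite $H$ is known. So: no error in what you wrote, but also no proof, and you should not present the telescoping computation as if it goes through without the missing concentration statement, since that statement is the entire content of the conjecture.
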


So far this conjecture has only been verified for very few graphs. In an unpublished manuscript, Simonovits proved the conjecture for even cycles $C_{2\ell}$ provided that $\exn_2 (n,C_{2\ell}) = \Theta ( n^{1+1/\ell} )$, which is known to be true only for $\ell \in \{2,3,5\}$ (see \cite{KST,Benson,Singleton,BondySimonovits,Wenger}).
Recently, two extensions of this theorem were obtained. One by Morris and Saxton \cite{MorrisSaxton} who proved a balanced version of Simonovits' theorem, which (roughly speaking)  additionally 
guarantees the copies of $C_{2 \ell}$ to be uniformly distributed in the graph. Another one by Jiang and Yepremyan \cite{Jiang2017} who extended Simonovits' theorem to linear cycles in hypergraphs. 
Erd\H os and Simonovits further proved \cref{conj:supersat} for all complete bipartite graphs $K_{s,t}$
with $s \leq t$ and $\exn_2 (n,K_{s,t}) = \Theta(n^{2-1/s})$, which is known to be true if $t$ is large enough in terms of $s$ 
and conjectured to be true for all $t \geq s$ (see \cite{norm_graphs,proj_norm_graphs, KST}).
Morris and Saxton obtained a balanced strengthening of this result as well \cite{MorrisSaxton}.

In this paper we shall extend the results of Morris and Saxton to theta graphs ($\theta_{a,b}$ is the graph consisting of $a$ internally vertex-disjoint paths of length $b$, each with the same endpoints) and complete $r$-partite $r$-graphs. The following two supersaturation results are trivial consequences of our main results (see \cref{subsec:balanced} below).

\begin{thm}\label{thm:supersat-normal-theta}
  For all $a,b \geq 2$, there is some $C>0$ so that every graph $G$ with $n$ vertices and $m \geq C \cdot n^{1 + 1/b}$ edges contains 
  $\Omega(m^{ab} n^{2 -a(b+1)})$ copies of $\theta_{a,b}$.
\end{thm}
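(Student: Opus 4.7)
The plan is to mimic the Simonovits-style proof of supersaturation for even cycles: pass to a high-minimum-degree subgraph, establish a lower bound on the number of paths of length $b$, apply Jensen's inequality to count $a$-subsets of such paths between each pair of endpoints, and argue that the contribution of degenerate $a$-subsets (those in which some two paths share an internal vertex) is a lower-order term.

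In detail, by iteratively deleting vertices of degree less than $m/(2n)$ one obtains a subgraph $G' \subseteq G$ with $e(G') \geq m/2$ and $\delta(G') \geq d := m/(2n) \gtrsim n^{1/b}$; in particular $d \gg b$ once $C$ is chosen sufficiently large. Greedy extension of a path starting from any vertex $v \in V(G')$ produces at least $d(d-1)\cdots(d-b+1) \gtrsim d^b$ paths of length $b$ from $v$; summing over $v$ and dividing by $2$ gives that the total number of paths of length $b$ in $G'$ satisfies $P_b \gtrsim n d^b \gtrsim m^b n^{1-b}$. Writing $p(x,y)$ for the number of paths of length $b$ between the distinct pair $\{x,y\}$, we have $\sum_{\{x,y\}} p(x,y) = P_b$, and $P_b/\binom{n}{2} \geq a$ once $C$ is large. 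Jensen's inequality applied to the convex function $\binom{\cdot}{a}$ then gives
\[
\sum_{\{x,y\}} \binom{p(x,y)}{a} \;\geq\; \binom{n}{2}\binom{P_b/\binom{n}{2}}{a} \;\gtrsim\; \frac{P_b^a}{n^{2(a-1)}} \;\gtrsim\; m^{ab}\, n^{2-a(b+1)}.
\]

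Each $a$-subset of paths of length $b$ between $\{x,y\}$ that is pairwise internally vertex-disjoint is by definition a copy of $\theta_{a,b}$, so up to an overcount by $a!$ and the degenerate contribution the displayed sum lower-bounds the number of copies of $\theta_{a,b}$. The main obstacle is to show that the number of \emph{degenerate} $a$-subsets, in which at least two of the paths share an internal vertex, is $o(m^{ab} n^{2-a(b+1)})$. Each such subset corresponds to a homomorphism $\theta_{a,b} \to G'$ whose image is a proper homomorphic quotient of $\theta_{a,b}$ on at most $1 + a(b-1)$ vertices and with strictly larger edge-to-vertex ratio. I would enumerate the finitely many isomorphism types of such quotients and, for each type $H$, bound the number of copies of $H$ in $G'$ by a greedy vertex-by-vertex embedding that exploits $e(G') \leq m$ (rather than the crude $n^{v(H)}$ bound). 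Summing the resulting bounds and using $m \gg n^{1+1/b}$ shows the degenerate contribution is negligible, and dividing the remainder by $a!$ yields the stated lower bound on the number of copies of $\theta_{a,b}$.
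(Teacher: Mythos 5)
Your setup --- the min-degree cleaning, the lower bound $P_b \gtrsim m^b n^{1-b}$ on the number of length-$b$ paths, and Jensen applied to $\binom{p(x,y)}{a}$ --- is correct and does show that the number of $a$-subsets of length-$b$ paths with common endpoints is $\gtrsim m^{ab}n^{2-a(b+1)}$. The gap is the last step, which is in fact the entire difficulty of the theorem: showing the degenerate $a$-subsets are negligible. Your plan is to enumerate the proper quotients $H$ of $\theta_{a,b}$ and bound the number of copies of each by ``a greedy vertex-by-vertex embedding that exploits $e(G')\le m$''. No such bound depending only on $n$, $m$ and the minimum degree can work, because the number of copies of a fixed $H$ in an arbitrary graph with $n$ vertices and $m$ edges is not controlled by the $G(n,m)$ heuristic $m^{e_H}n^{v_H-2e_H}$ that your computation requires: already for $H$ a path with two edges, a star has $\Theta(mn)$ copies against the heuristic $m^2/n$, and the same phenomenon afflicts the quotients of $\theta_{a,b}$ --- any greedy embedding must at some point choose a neighbour of an already-embedded vertex, which costs up to $\Delta(G')$, potentially $n-1$, rather than $m/n$. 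This is precisely why \cref{conj:supersat} is open in general: if degenerate counts were always bounded by the random-graph prediction, the conjecture would follow for every bipartite graph from your Jensen computation. A secondary inaccuracy: the quotients need not have strictly larger edge-to-vertex ratio, since two of the $a$ paths may share edges as well as internal vertices; for $a=2$ the quotient in which the two paths share their first edge has ratio exactly $1$, the same as $C_{2b}$.

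The paper sidesteps this issue entirely: \cref{thm:supersat-normal-theta} is deduced from the balanced statement \cref{thm:supersat-theta}, which is proved by incrementally growing a good family via \cref{alg} inside a Morris--Saxton refined $t$-neighbourhood (\cref{thm:refined}). The point of that machinery is to locate a vertex $x$ and an expansion structure $(\cB,\cQ)$ in which paths are concatenated one vertex at a time while reuse of vertices (and saturated edge sets) is forbidden at each step, so degeneracy is excluded by construction rather than bounded globally; the case split over $t\in\{2,\dots,b\}$ accounts for the level at which expansion may stall. To rescue your route you would need genuinely new input controlling the pairwise intersections of the paths (degree distribution, codegrees, or an almost-regular substructure), not just $e(G')\le m$.
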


\begin{thm}\label{thm:supersat-normal-compl}
  For all $2 \leq a_1 \leq \ldots \leq a_r$, there is some $C>0$ so that every $r$-graph $G$ with $n$ vertices and $m \geq C \cdot n^{r - 1/a_1 \cdots a_{r-1}}$ edges contains 
  $\Omega(m^{a_1 \cdots a_r} n^{a_1 + \ldots + a_r -r \cdot a_1 \cdots a_r})$ copies of $K_{a_1, \ldots, a_r}^{(r)}$.
\end{thm}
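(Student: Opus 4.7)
The plan is to prove \cref{thm:supersat-normal-compl} by induction on $r$, following the Erd\H os–K\H ov\'ari–S\'os–Tur\'an double-counting template but with each Jensen application upgraded to a supersaturation inequality. The base case $r=2$ is the Erd\H os–Simonovits supersaturation theorem for $K_{a_1,a_2}$ recalled in the introduction. The one subtle choice is to induct by fixing the \emph{smallest} part (of size $a_1$), not the largest, because this is what makes the induction hypothesis fit the hypothesis $m \geq C n^{r - 1/(a_1 \cdots a_{r-1})}$ exactly.

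For the inductive step, assume the statement holds for $(r-1)$-graphs with constant $C'$, and let $G$ be an $r$-graph on $n$ vertices with $m \geq C n^{r - 1/(a_1 \cdots a_{r-1})}$. For each $A \in \binom{V(G)}{a_1}$, define the link $(r-1)$-graph $G_A$ on $V(G) \setminus A$ whose edges are the $(r-1)$-sets $S$ with $S \cup \{u\} \in E(G)$ for every $u \in A$. Each copy of $K^{(r-1)}_{a_2, \ldots, a_r}$ in $G_A$ pairs with $A$ to give a copy of $K^{(r)}_{a_1, \ldots, a_r}$ in $G$, and every copy of $K^{(r)}_{a_1, \ldots, a_r}$ arises this way at most $O(1)$ times (accounting for ties $a_i = a_1$). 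Double counting pairs $(S,A)$ with $A \subseteq N(S)$ and applying convexity of $\binom{\cdot}{a_1}$ to $\sum_S d(S) = rm$ gives
\[
\sum_A e(G_A) \;=\; \sum_{S \in \binom{V}{r-1}} \binom{d(S)}{a_1} \;\gtrsim\; \frac{m^{a_1}}{n^{(r-1)(a_1-1)}}.
\]

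Set the threshold $T := C' n^{(r-1) - 1/(a_2 \cdots a_{r-1})}$ required by the induction hypothesis on $(a_2, \ldots, a_r)$, and call $A$ \emph{good} if $e(G_A) \geq T$. Using the trivial identity $a_1/(a_1 \cdots a_{r-1}) = 1/(a_2 \cdots a_{r-1})$, both the lower bound above and the bad-contribution bound $\binom{n}{a_1} T$ have the same $n$-exponent $a_1 + r - 1 - 1/(a_2 \cdots a_{r-1})$; the main term is larger by a factor of order $C^{a_1}/C'$, so for $C$ sufficiently large the good $A$'s carry at least half of $\sum_A e(G_A)$. This exact cancellation is the reason to induct on the smallest part, since fixing any larger part leaves a strictly positive power of $n$ on the wrong side.

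Finally, apply the induction hypothesis to each good $G_A$ to get $\gtrsim e(G_A)^{a_2 \cdots a_r} n^{(a_2 + \cdots + a_r) - (r-1)(a_2 \cdots a_r)}$ copies of $K^{(r-1)}_{a_2, \ldots, a_r}$, sum using Jensen on $\sum_{\text{good}} e(G_A)^{a_2 \cdots a_r}$ with $|\{\text{good}\}| \leq \binom{n}{a_1}$, and collect exponents; a short computation yields the target bound $\gtrsim m^{a_1 \cdots a_r} n^{a_1 + \cdots + a_r - r(a_1 \cdots a_r)}$. The main obstacle, once the correct part to fix has been identified, is bookkeeping the constants $C, C'$ across the levels of the induction so that each dominates the threshold comparison at its level; after that the entire argument is routine convexity.
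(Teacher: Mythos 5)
Your proposal is correct, and it takes a genuinely different route from the paper. The paper obtains \cref{thm:supersat-normal-compl} as an immediate corollary of the balanced supersaturation theorem (\cref{thm:supersat-compl}, via \cref{thm:supersat-r-partite}): substituting $k=m\,n^{-(r-1/(a_1\cdots a_{r-1}))}$ into the bound $\card{\cH}\ge \de k^{a_1\cdots a_r}n^{a_1+\ldots+a_{r-1}}$ gives exactly $\Omega(m^{a_1\cdots a_r}n^{a_1+\ldots+a_r-r\cdot a_1\cdots a_r})$. That theorem is in turn proved by a greedy augmentation argument (\cref{prop:supersat-complete}) that maintains degree conditions on the family of copies, with an inner induction that builds the parts one vertex at a time while avoiding saturated tuples. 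Your argument dispenses with all of that machinery and proves only the unbalanced count directly, by induction on the uniformity: fix the smallest part $A$, pass to the common link $G_A$, and control $\sum_A e(G_A)=\sum_S\binom{d(S)}{a_1}$ by convexity. I checked the exponent bookkeeping: the main term and the bad-contribution bound both have $n$-exponent $a_1+r-1-1/(a_2\cdots a_{r-1})$ (this is where fixing the smallest part is essential — fixing a part of size $a_j$ with $a_j>a_1$ leaves a gap of $n^{a_j/(a_1\cdots a_{r-1})-1/(a_1\cdots\widehat{a_j}\cdots a_{r-1})}$ on the wrong side), and the final Jensen step does collapse to $(a_1+\ldots+a_r)-r\cdot a_1\cdots a_r$ in the exponent. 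The overcounting factor is at most $r$, and the base case is the unconditional Erd\H{o}s--Simonovits/K\H{o}v\'ari--S\'os--Tur\'an supersaturation for $K_{a_1,a_2}$. What your approach buys is a short, self-contained proof of the plain supersaturation statement; what it does not give is the balanced version (the uniform distribution of the copies over tuples $(S_1,\ldots,S_r)$), which is the whole point of the paper's construction and is indispensable for the container applications (\cref{thm:count-compl}). So your proof is a valid and more elementary derivation of \cref{thm:supersat-normal-compl} itself, but it could not replace \cref{thm:supersat-r-partite} in the paper's pipeline.
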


Note that $\exn_2(n,\theta_{a,b})=\Theta(n^{1+1/b})$ if $a$ is sufficiently large with respect to $b$ (see \cite{theta_up-bound, conlon-theta}), and that $\exn_r(n,K_{a_1, \ldots, a_r}^{(r)})=\Theta (n^{r-1/a_1 \cdots a_{r-1}})$ if $a_r$ is sufficiently large with respect to $a_1,\ldots, a_{r-1}$ (c.f. \cite{Erdos-compl-upperbound, exnumber-compl-hyper}).
Hence we confirm \cref{conj:supersat} for `most' theta graphs and `most' complete $r$-partite $r$-graphs.

\subsection{Counting \texorpdfstring{$H$}{H}-free subgraphs}
It is a central problem in extremal graph theory to determine the number, $F_r(n,H)$, of $H$-free $r$-graphs on $n$ vertices for a given fixed $r$-graph $H$ and a natural number $n$. We trivially have
\begin{equation}\label{eq:general}
2^{\exn_r(n,H)} \le F_r(n,H) \le \sum_{i \le \exn_r(n,H)}\binom{\binom{n}{r}}{i}= n^{O(\exn_r(n,H))}.
\end{equation}
and all existing results in the area seem to indicate that the lower bound in \eqref{eq:general} is closer to the truth.
The problem of estimating $F_r(n,H)$ is essentially solved for every non-$r$-partite $r$-graph $H$. Indeed, in the graph case, Erd\H os, Frankl and R\"odl \cite{ErdosFranklRodl86} showed
\begin{equation}\label{eq:strong}
F_2(n,H)=2^{(1+o(1))\exn_2(n,H)},
\end{equation}
using Szemer\'edi's regularity lemma. The corresponding result for $r$-graphs was proved by Nagle, R\"odl and Schacht \cite{hypergraph_non-deg} via the hypergraph regularity lemma.

For $r$-partite $r$-graphs on the other hand, the problem seems to be more challenging and much less is known. Morris and Saxton \cite{MorrisSaxton} showed that \cref{eq:strong} does not hold for $C_6$. 
Even the weaker bound $F_r(n,H)=2^{O(\exn_r(n,H))}$ (
a conjecture usually attributed to Erd\H os) has been proven in only a few special cases: for most complete bipartite graphs (see \cite{BaloghSamotijKmm,BaloghSamotijKst}), for cycles of length $\ell \in \{4,6,10\}$ (see \cite{KleitmanWinston,MorrisSaxton}), and for $r$-uniform linear cycles (see \cite{MubayiWang,linear-cycles}). In this paper, we confirm the weaker conjecture for most theta graphs and most complete $r$-partite $r$-graphs. 
More precisely we prove the following results.

\begin{thm}\label{thm:count-theta}
For every $a,b \ge 2$, there are at most $2^{O(n^{1+1/b})}$ $\theta_{a,b}$-free graphs on $n$ vertices and at most $2^{o(n^{1+1/b})}$ of them have $o(n^{1+1/b})$ edges.
\end{thm}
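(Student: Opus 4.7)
The approach is the hypergraph container method, with the crucial input being a balanced supersaturation theorem for $\theta_{a,b}$ that will be established as one of the main results of the paper. The plan is to prove such a balanced supersaturation, feed it into the Saxton--Thomason (or Balogh--Morris--Samotij) container theorem, and read off both claims from the resulting collection of containers.

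More concretely, I would first prove a balanced version of \cref{thm:supersat-normal-theta}: every $n$-vertex graph $G$ with $m \geq C n^{1+1/b}$ edges admits a family $\cF$ of $\theta_{a,b}$-copies in $G$ with $|\cF| \gtrsim m^{ab}n^{2-a(b+1)}$ satisfying codegree bounds, in the sense that, viewing $\cF$ as an $(ab)$-uniform hypergraph on the edge set $E(G)$, the degree of every $j$-element subset of $E(G)$ is suitably bounded relative to the average for every $2 \leq j \leq ab$. Applying the container theorem to the auxiliary hypergraph on $E(K_n)$ whose hyperedges are the $\theta_{a,b}$-copies --- the codegree bounds verify the required smoothness --- then produces a family $\cC$ of containers $C \subseteq E(K_n)$ with $|\cC| \leq 2^{o(n^{1+1/b})}$ and each $|C| \leq D \cdot n^{1+1/b}$ for some constant $D = D(a,b)$, such that every $\theta_{a,b}$-free graph on vertex set $[n]$ is edge-contained in some $C \in \cC$.

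The first claim then follows immediately from
\[
\#\{\theta_{a,b}\text{-free graphs on }[n]\} \leq \sum_{C \in \cC} 2^{|C|} \leq |\cC| \cdot 2^{Dn^{1+1/b}} = 2^{O(n^{1+1/b})}.
\]
For the second claim, note that whenever $m = o(n^{1+1/b})$ we have $m \log(Dn^{1+1/b}/m) = o(n^{1+1/b})$, hence $\binom{Dn^{1+1/b}}{\leq m} = 2^{o(n^{1+1/b})}$. Consequently the number of $\theta_{a,b}$-free graphs on $[n]$ with $o(n^{1+1/b})$ edges is at most $|\cC| \cdot 2^{o(n^{1+1/b})} = 2^{o(n^{1+1/b})}$.

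The principal obstacle is the balanced supersaturation theorem itself. A naive averaging yields a family $\cF$ whose codegrees concentrate on high-degree vertex pairs and on pairs of edges incident to high-degree vertices, which gives only a weak container bound. The expected strategy is to first regularise $G$ (replacing it by a nearly-regular subgraph of comparable edge count), then build $\cF$ by a careful (probabilistic or weighted) selection of internally vertex-disjoint $b$-paths between pairs of vertices, controlling higher codegrees via a K\H{o}v\'ari--S\'os--Tur\'an type bound applied inside common neighbourhoods. Achieving codegree control strong enough for the container theorem to yield $|\cC| = 2^{o(n^{1+1/b})}$, rather than merely $2^{O(n^{1+1/b})}$, is the crux and is what distinguishes this problem from more classical applications of the container method.
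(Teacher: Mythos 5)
Your overall strategy matches the paper's: prove a balanced supersaturation theorem for $\theta_{a,b}$ and feed it into the hypergraph container machinery. This is exactly what the paper does, with \cref{thm:supersat-theta} playing the role of your balanced supersaturation statement and \cref{prop:supersat-count} packaging the container step. The two final counting estimates you write down are also the ones the paper uses.

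There is, however, a genuine gap in the container step as you describe it. A single application of the Balogh--Morris--Samotij / Saxton--Thomason container theorem to the hypergraph of $\theta_{a,b}$-copies on vertex set $E(K_n)$ does \emph{not} produce containers of size $O(n^{1+1/b})$; it only shrinks each container by a constant factor $(1-\e)$, so you would be left with containers of size roughly $(1-\e)\binom{n}{2}$, and $\sum_{C}2^{|C|}$ would be vastly larger than $2^{O(n^{1+1/b})}$. To reach containers of size $O(n^{1+1/b})$ you must iterate: starting from $K_n$, at each stage you treat the current container as a graph $G$ with $e(G)=k\cdot n^{1+1/b}$, invoke balanced supersaturation at that density (note the quantitative codegree bounds in \cref{thm:supersat-theta} are calibrated to depend on $k$ precisely for this reason), apply the container lemma to shrink by $(1-\e)$, and repeat until $k$ drops below the constant $k_0$ of the supersaturation theorem. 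The total number of containers is then bounded by summing the exponents over the $O(\log n)$ iterations; this is what \cref{prop:supersat-count-base} and the proof of \cref{prop:supersat-count} carry out. Without this iteration the ``first claim then follows immediately'' line does not hold.

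As a secondary remark, your suggested route to the balanced supersaturation theorem itself (regularise $G$, then a probabilistic selection of internally disjoint $b$-paths with a K\H{o}v\'ari--S\'os--Tur\'an codegree control) is not what the paper does: the paper instead greedily adds $\theta_{a,b}$-copies one at a time to a ``good'' family (\cref{prop:addtheta}), building each new copy inside a Morris--Saxton refined $t$-neighbourhood (\cref{thm:refined}). You correctly identify this as the crux, but be aware that the naive probabilistic construction you sketch has not, to date, been made to work for this class of graphs, whereas the greedy ``add one copy at a time while tracking saturated edge-sets'' method is what succeeds.
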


\begin{thm}\label{thm:count-compl}
For all $2 \le a_1 \le \ldots \le a_r$, there are at most $2^{O\left(n^{r-1/(a_1 \cdots a_{r-1})}\right)}$ $K_{a_1, \ldots, a_r}^{(r)}$-free $r$-graphs on $n$ vertices and at most $2^{o\left(n^{r-1/(a_1 \cdots a_{r-1})}\right)}$ of them have $o\left(n^{r-1/(a_1 \cdots a_{r-1})}\right)$ edges.
\end{thm}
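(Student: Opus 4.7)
Write $K := K_{a_1,\ldots,a_r}^{(r)}$, $k := a_1 \cdots a_r$, and $\pi := r - 1/(a_1 \cdots a_{r-1})$. The plan is to combine the balanced supersaturation theorem for $K$ (the main result from which \cref{thm:supersat-normal-compl} is deduced) with the hypergraph container method of Saxton--Thomason and Balogh--Morris--Samotij in its standard iterative form. Concretely, the goal is to construct a family $\cC$ of $r$-graphs on $[n]$ such that every $K$-free $r$-graph on $[n]$ is a subgraph of some $C \in \cC$, with $\card{\cC} \le 2^{o(n^\pi)}$ and $e(C) \le B \cdot n^\pi$ for a suitable constant $B = B(a_1, \ldots, a_r)$.

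To build $\cC$, I would set up, for each $r$-graph $G$ on $[n]$ with $e(G) \gg n^\pi$, the $k$-uniform auxiliary hypergraph $\cH(G)$ on vertex set $E(G)$ whose hyperedges are the (edge sets of) copies of $K$ in $G$. The balanced supersaturation theorem produces a large subhypergraph of $\cH(G)$ whose maximum $j$-degree is roughly $(e(G)/n^\pi)^{-(j-1)} \bar d \cdot n^{-\e}$ for every $j \in \{1, \ldots, k\}$, where $\bar d$ denotes the average degree; these are precisely the hypotheses needed to apply the container lemma. One application then replaces $G$ by a family of at most $\exp\bigl(o(e(G) \cdot n^{-\e})\bigr)$ subgraphs, each of which still contains all $K$-free subgraphs of $G$ but misses a positive proportion of $E(G)$. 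Iterating $O(\log n)$ times starting from $G = K_n^{(r)}$ pushes the edge counts down to $O(n^\pi)$; a standard bookkeeping argument then shows that the resulting collection has at most $2^{o(n^\pi)}$ members.

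With $\cC$ in hand, the first assertion of \cref{thm:count-compl} follows at once from
$$F_r(n, K) \le \sum_{C \in \cC} 2^{e(C)} \le \card{\cC} \cdot 2^{B n^\pi} = 2^{O(n^\pi)}.$$
For the second, note that any $K$-free $r$-graph with $o(n^\pi)$ edges sits inside some $C \in \cC$, and the number of subgraphs of $C$ with at most $\alpha n^\pi$ edges is bounded by $\sum_{m \le \alpha n^\pi} \binom{Bn^\pi}{m} \le 2^{\alpha \log(eB/\alpha) n^\pi} = 2^{o(n^\pi)}$ as $\alpha \to 0$; multiplying by $\card{\cC}$ preserves this bound. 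The main obstacle is calibrating the balanced supersaturation theorem so that the codegree decay in $j$ is strong enough to feed the container lemma at every level $j \in \{1, \ldots, k\}$ simultaneously; this is exactly where the balanced strengthening of \cref{thm:supersat-normal-compl} (to be established in the body of the paper) is indispensable, and it is the technical heart of the argument.
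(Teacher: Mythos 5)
Your proposal is correct and follows the same route as the paper: the paper deduces \cref{thm:count-compl} by combining the balanced supersaturation theorem \cref{thm:supersat-compl} with \cref{prop:supersat-count}, whose proof in the appendix is precisely the iterated hypergraph-container argument you sketch. One point to tighten (writing $\pi = r - 1/(a_1\cdots a_{r-1})$ as you do): a single fixed run of the iteration produces $\card{\cC}\le 2^{cn^\pi}$ with a small but fixed constant $c>0$, not $2^{o(n^\pi)}$; for the second assertion you must, as the paper does, rerun the iteration with a $\delta$-dependent stopping threshold $k_0(\delta)$ chosen so that $c(k_0)<\delta$, and only then pick $\alpha$ small relative to the resulting container size $B(k_0)\,n^\pi$.
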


 In particular it follows for those $r$-graphs $H$ that there is a positive constant $c=c(H)$ such that asymptotically almost every $H$-free $r$-graph has at least $c \cdot \exn_r(n,H)$ edges. This confirms a special case of a conjecture of Balogh, Bollob\'as and Simonovits \cite{bal-bol-sim_conj} which states that this is true for all bipartite graphs $H$ containing a cycle.

\subsection{Balanced supersaturation theorems}
\label{subsec:balanced}
The hypergraph container method, developed independently by Balogh, Morris and Samotij \cite{BaloghMorrisSamotij_Container}, and Saxton and Thomason \cite{SaxtonThomason_Container}, is one of the most successful recent developments in extremal combinatorics.  In order to apply the method, we have to find a family of copies of $H$ in $G$ that are `evenly distributed' in the following sense.

\begin{defi}[{\cite[Definition~5.5]{MorrisSaxton}}]\label{def:Erdos--Simonovits-good}
Let $\al>0$. An $r$-graph $H$ is called {\em Erd\H{o}s-Simonovits $\al$-good for a function} $m=m(n)$ if there exist positive constants $C$ and $k_0$ such that the following holds. Let $k\ge k_0$, and suppose that $G$ is an $r$-graph with $n$ vertices and $k\cdot m(n)$ edges. Then there exists a non-empty collection $\cH$ of copies of $H$ in $G$, satisfying
	\[
	d_{\cH}(\si) \le \frac{C\cdot \card{\cH}}{k^{(1+\al)(\card{\si}-1)}e(G)} \quad \text{for every $\si\subset E(G)$ with $1\le \card{\si}\le e(H)$,}
	\]
	where $d_{\cH}(\si) := \card{\{ H' \in \cH: \si \subset H'\}} $ denotes the degree of $\sigma$ in $\cH$.
\end{defi}

Morris and Saxton \cite{MorrisSaxton} conjectured that every bipartite graph $H$ is Erd\H{o}s-Simonovits $\al$-good for $m(n)=\exn_2(n,H)$ and some $\al=\al(H) >0$ (the same statement is trivially true for non-bipartite graphs). Furthermore, they expect that the family $\cH$ can be chosen so that it contains (up to a multiplicative factor) as many copies of $H$ as the random graph $G(n,m)$ with $m = k \cdot \exn_2(n,H)$, which leads to a stronger form of \cref{conj:supersat}. Their motivation in making \cref{def:Erdos--Simonovits-good} is the following proposition.

\begin{prop}[{\cite[Proposition~5.6]{MorrisSaxton}}]\label{prop:supersat-count}
  Let $H$ be an $r$-graph and let $\al>0$. If $H$ is Erd\H{o}s-Simonovits $\al$-good for $m(n)$, then the following hold.
	\begin{enumerate}[\normalfont (1)]
		\item There are at most $2^{O(m(n))}$ $H$-free $r$-graphs on $n$ vertices,
		\item The number of $H$-free graphs with $n$ vertices and $o(m(n))$ edges is $2^{o(m(n))}$.
	\end{enumerate} 
\end{prop}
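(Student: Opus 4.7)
My plan is to apply the hypergraph container method of Balogh--Morris--Samotij and Saxton--Thomason, for which the $\al$-goodness condition is tailor-made. Consider the auxiliary $e(H)$-uniform hypergraph $\cA$ whose vertex set is $\binom{[n]}{r}$ (the potential edges of an $r$-graph on $[n]$) and whose hyperedges are the edge sets of copies of $H$ in the complete $r$-graph $K_n^{(r)}$; then the $H$-free $r$-graphs on $[n]$ are exactly the independent sets of $\cA$. Given any $G\subseteq K_n^{(r)}$ with $e(G)=k\cdot m(n)$ and $k\ge k_0$, the $\al$-goodness hypothesis supplies a family $\cH$ of copies of $H$ in $G$ satisfying
\[
d_{\cH}(\si) \le \frac{C\cdot \card{\cH}}{k^{(1+\al)(\card{\si}-1)}\cdot e(G)}
\]
for every $1\le\card{\si}\le e(H)$, which is exactly the codegree input required by a container theorem with parameter $\tau \asymp 1/k^{1+\al}$.

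A single container step then produces, for each such $G$, a family $\cF(G)$ of subgraphs of $G$ such that every $H$-free subgraph of $G$ lies in some member of $\cF(G)$, every member has at most $km(n)/2$ edges, and $\log\card{\cF(G)}\le C'\cdot m(n)/k^{\al}$ (up to logarithmic factors I suppress in this sketch). I would iterate this step starting from $K_n^{(r)}$, stopping as soon as a container has at most $T\cdot m(n)$ edges for a parameter $T\ge k_0$ chosen below; since each step halves the current edge count, the iteration terminates after $O(\log n)$ rounds and outputs a final family $\cC$ of containers, all of size at most $T\cdot m(n)$. The costs compound into a geometric sum
\[
\log\card{\cC} \le \sum_{i\ge 0} \frac{C'\cdot m(n)}{(2^i T)^{\al}} = O(m(n)/T^{\al}),
\]
which converges precisely because $\al>0$.

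For part (1), take $T=k_0$ constant: then $\log\card{\cC}=O(m(n))$ and each container has at most $k_0 m(n)$ edges, so the total number of $H$-free $r$-graphs is bounded by $\card{\cC}\cdot 2^{k_0 m(n)} = 2^{O(m(n))}$. For part (2), write the target $t=o(m(n))$ as $t=\omega(n)\cdot m(n)$ with $\omega(n)\to 0$, and choose a slowly diverging threshold $T=T(n)\to\infty$ satisfying $\omega(n)\log(T/\omega(n))\to 0$, for instance $T=1/\sqrt{\omega(n)}$. Every $H$-free $r$-graph with at most $t$ edges lies in some $C\in\cC$, so their total number is at most
\[
\card{\cC}\cdot\binom{T\cdot m(n)}{t} \le 2^{O(m(n)/T^{\al})}\cdot 2^{t\log(eT/\omega(n))} = 2^{o(m(n))}.
\]

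The main obstacle will be (i) verifying that the container theorem's codegree hypotheses are correctly matched with the $\al$-good bound at every iteration so that the halving scheme is valid from top to bottom, and (ii) balancing the two error terms in part (2), namely the container cost $O(m(n)/T^{\al})$ and the enumerative factor $\binom{Tm(n)}{t}$, through an appropriate choice of $T=T(n)$; the combinatorial content is essentially free once the $\al$-good input is in hand.
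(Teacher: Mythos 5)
Your plan matches the paper's proof almost step for step: iterated containers with $\tau \asymp k^{-(1+\al)}$, geometric convergence of the container cost because $\al>0$, a constant threshold $k_0$ for part~(1), and a slowly-diverging threshold balanced against the enumerative factor for part~(2). The one step you assert but do not justify is that each container has at most $km(n)/2$ edges (or, more generally, a constant-factor-fewer edges than $G$): the container theorem only guarantees that the number of copies of $H$ inside each container $U$ is at most $\e\,e(\cH)$, not that $e(U)$ itself shrinks. The paper bridges this gap by invoking the $j=1$ case of the $\al$-goodness bound, namely $\De_1(\cH)\le C\,\card{\cH}/e(G)$, which gives
\[
e(\cH)-e(\cH[U]) \;\le\; \card{V(\cH)\setminus U}\cdot \frac{C\,e(\cH)}{e(G)},
\]
and hence $\card{V(\cH)\setminus U}\ge (1-\e)e(G)/C$; you flagged this as a point needing verification, and this is precisely the argument that completes it. Everything else in your sketch is sound, and your choice of an explicit $n$-dependent stopping threshold $T(n)$ in part (2) is an equivalent (arguably cleaner) parametrisation of the paper's "for every $\de>0$ choose $k_0$ large" argument.
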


A proof-sketch for a similar result was given in \cite{MorrisSaxton}. For completeness, we provide a full proof of \cref{prop:supersat-count} in \cref{sec:appendix-A}. 

We will extend the ideas from \cite{MorrisSaxton} to prove the following %two
balanced supersaturation theorems, which are the main results of this paper.

\begin{thm}\label{thm:supersat-theta}
For all $ a,b \ge 2 $, there are positive constants $ C, \de$ and $k_0$ such that for all $ k \ge k_0 $ and all graphs $G$ with $n$ vertices and $ kn^{1+1/b}$ edges, there exists a family $ \cH$ of copies of $\theta_{a,b}$ in $G$ so that
	\begin{enumerate}[(i)]
		\item $|\cH| \ge \de k^{ab} n^2 $ and
		\item $d_{\cH}(\si) \le \frac{C \cdot |\cH|}{k^{(1+\al)(|\si|-1)}e(G)}$ for all $ \si \subset E(G)$ with $1 \le |\si| \leq ab$, where $\al=\frac{1}{ab-1}$.
	\end{enumerate} 
\end{thm}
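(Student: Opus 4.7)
The plan is to adapt the approach Morris and Saxton used for even cycles (the case $a=2$, since $\theta_{2,b} = C_{2b}$) to general $a \geq 2$, exploiting the fact that a $\theta_{a,b}$-copy is precisely a pair of vertices joined by $a$ internally disjoint paths of length $b$. The first step is a standard clean-up: by iteratively deleting vertices of degree below $\tfrac{1}{2}kn^{1/b}$, we pass to a subgraph $G'\subseteq G$ with $\delta(G')\ge \tfrac{1}{2}kn^{1/b}$ retaining at least $e(G)/2$ edges. Since the minimum degree is $\Omega(kn^{1/b})$, the number of $b$-walks from any vertex is $\Omega(k^b n)$ and a routine averaging argument gives $\Omega(n^2)$ pairs $(u,v)$ between which there are $\Omega(k^b)$ paths of length $b$. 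Call these pairs \emph{rich}.

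For each rich pair $(u,v)$, greedily select $a$ internally disjoint $b$-paths: since each chosen path blocks only $b-1$ interior vertices, and only an $O(b k^{b-1})$ fraction of the $\Omega(k^b)$ candidate paths can be killed by any single blocked vertex, one retains $\Omega(k^{ab})$ ordered $a$-tuples of internally disjoint $b$-paths per rich pair. Summing over pairs and dividing by the $a!$ symmetries produces an initial family $\mathcal{H}_0$ of size $\Omega(k^{ab}n^2)$, establishing (i) up to the final pruning.

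The heart of the argument is (ii). Fix $\sigma\subseteq E(G)$ with $s:=|\sigma|\in\{1,\dots,ab\}$. Any copy of $\theta_{a,b}$ in $\mathcal{H}_0$ containing $\sigma$ is specified by an assignment of each edge of $\sigma$ to one of the $a$ paths, together with completions of each path. For a fixed assignment putting $s_i$ edges (in a prescribed sub-structure) onto path $i$, the number of $b$-paths realising that structure between the forced endpoints is at most $O(k^{b-s_i})$ — with a slightly better bound when $\sigma|_{\text{path }i}$ is disconnected — by a path-extension lemma derived from the minimum-degree condition on $G'$ with $O(1)$ vertices removed (the removal handles the coupling coming from internal disjointness). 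The remaining paths contribute $O(k^b)$ completions each. Optimising over assignments, the worst case is $s_1=s$, $s_i=0$ for $i>1$, and this is exactly where the exponent $\alpha=1/(ab-1)$ is forced: matching the arithmetic gain $k^{-s}\cdot k^{b}\cdot n^{-(\text{correction})}$ against $|\mathcal{H}|/(k^{(1+\alpha)(s-1)}e(G))$ pins down that value of $\alpha$.

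The main obstacle I anticipate is handling the coupling between the $a$ paths imposed by internal disjointness when $\sigma$ is spread across several of them, since a naive product-of-marginals bound is too weak to absorb the $\alpha$ gain. I would circumvent this by proving the path-extension lemma in a "robust" form, valid after deleting up to $a(b-1)$ vertices from $G'$, and applying it to the paths in a fixed order while tracking the shrinking vertex set. Once the estimate on $d_{\mathcal{H}_0}(\sigma)$ is in hand, a final pruning removes any $\theta$-copy $T\in\mathcal{H}_0$ that is \emph{overloaded} at some $\sigma\subseteq E(T)$. A double-counting Markov argument, applied layer by layer over $s=1,\dots,ab$, shows that at most $|\mathcal{H}_0|/2$ copies are discarded, so the surviving family $\mathcal{H}$ simultaneously meets (i) and (ii).
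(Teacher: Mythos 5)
There are genuine gaps, and they occur exactly at the points where the paper has to work hardest. First, the claim that ``a routine averaging argument gives $\Omega(n^2)$ pairs $(u,v)$ between which there are $\Omega(k^b)$ paths of length $b$'' does not follow from the minimum-degree condition: averaging only controls the mean, and the distribution of $b$-walks over pairs can be badly skewed. This is precisely the obstruction that forces Morris and Saxton (and this paper) to introduce refined $t$-neighbourhoods with $t$ ranging over $\{2,\dots,b\}$: the neighbourhood expansion from a vertex $x$ may stall at some scale $t<b$, in which case the abundant paths from $x$ have length $t$ and terminate in a small set $B_t$, and one must splice in the zigzag paths $Z_1,\dots,Z_a$ between $B_{t-1}$ and $B_t$ to manufacture paths of length $b$ at all. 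Your reduction to ``rich pairs joined by many $b$-paths'' skips this case entirely.

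Second, the upper bounds you need -- that the number of $b$-paths between two fixed vertices extending a prescribed sub-structure $\sigma_i$ is $O(k^{b-s_i})$, and likewise that a blocked vertex kills only $O(k^{b-1})$ candidate paths -- cannot be ``derived from the minimum-degree condition'': minimum degree yields lower bounds on extensions, never upper bounds, and in an arbitrary graph with $kn^{1+1/b}$ edges these counts can be larger by polynomial factors in $n$ (a single vertex can lie on essentially all $u$--$v$ paths). The paper obtains such upper bounds only for the specially constructed path system $\cQ$ inside the refined $t$-neighbourhood (properties (P6) and (P7) of \cref{thm:refined}), not for all paths of $G$. Without valid a priori bounds on $d_{\cH_0}(\sigma)$, your final Markov/pruning step has nothing to sum, and discarding every copy meeting an overloaded $\sigma$ could destroy almost all of $\cH_0$. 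The paper avoids this by an entirely different mechanism: it builds $\cH$ incrementally (\cref{prop:addtheta}), maintaining the degree conditions at every step via saturated sets and their links, and shows the generating algorithm always produces a copy that can be added until $|\cH|\ge\de k^{ab}n^2$.
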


\begin{thm}\label{thm:supersat-compl}
For all $ 2 \le a_1 \le \ldots \le a_r $, there are positive constants $ C, \de$ and $ k_0$ such that for all $ k \ge k_0 $ and all $r$-graphs $G$ with $n$ vertices and $ kn^{r-1/(a_1 \cdots a_{r-1})}$ edges, there exists a family $ \cH$ of copies of $K_{a_1, \ldots, a_r}^{(r)}$ in $G$ so that
	\begin{enumerate}[(i)]
		\item $|\cH| \ge \de k^{a_1\cdots a_r}n^{a_1+\ldots + a_{r-1}} $ and
		\item $d_{\cH}(\si) \le \frac{C \cdot |\cH|}{k^{(1+\al)(|\si|-1)}e(G)}$ for all $ \si \subset E(G)$ with $1 \le |\si| \leq a_1 \cdots a_r$, where $\al=\frac{1}{a_1\cdots a_r-1}$.
	\end{enumerate} 
\end{thm}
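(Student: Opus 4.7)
The plan is to adapt the strategy Morris and Saxton developed for $K_{s,t}$ in \cite{MorrisSaxton} to the $r$-partite, $r$-uniform setting. Write $H = K_{a_1,\ldots,a_r}^{(r)}$ and let $p$ be such that $p\binom{n}{r} \sim kn^{r-1/(a_1\cdots a_{r-1})} = e(G)$. A direct expectation computation shows that the random $r$-graph $G^{(r)}(n,p)$ contains $\Theta(k^{a_1\cdots a_r} n^{a_1+\cdots+a_{r-1}})$ copies of $H$ on average, matching the lower bound in (i). The goal is therefore to produce a family $\cH$ which, with respect to the codegree statistics in (ii), mimics the family of copies of $H$ in this random $r$-graph.

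The first step would be a preprocessing of $G$: pass to a subgraph $G' \subseteq G$ with $e(G') \ge e(G)/2$ in which, for each $1 \le i \le r-1$ and each $i$-set $S \subset V(G')$, the codegree $d_{G'}(S)$ is at most a polylogarithmic factor times the value expected in a $K_{a_1,\ldots,a_r}^{(r)}$-extremal $r$-graph. Such a $G'$ is extracted by iteratively discarding edges incident to $i$-sets whose codegrees are too large, and the total number of discarded edges is controlled by an iterated K\H{o}v\'ari--S\'os--Tur\'an double-counting argument, as in the cleaning step of \cite{MorrisSaxton}.

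I would then take $\cH$ to be the set of labelled copies of $H$ in $G'$. To lower bound $\card{\cH}$, order the parts of $H$ as $A_1, \ldots, A_r$: for each ordered $a_1$-tuple $(v_1,\ldots,v_{a_1}) \in V(G')^{a_1}$, consider the $(r-1)$-uniform link $L(v_1,\ldots,v_{a_1})$ consisting of those $(r-1)$-sets $T$ with $T \cup \{v_j\} \in E(G')$ for each $j$. Jensen's inequality, applied to the codegree sequence of $G'$, yields a lower bound on the average size of $L$; iterating the argument over the $r$ parts (and lower bounding the number of copies of $K_{a_2,\ldots,a_r}^{(r-1)}$ in an average link by induction on $r$) gives $\card{\cH} \ge \de k^{a_1\cdots a_r} n^{a_1+\cdots+a_{r-1}}$, as required for (i).

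The main obstacle is verifying the codegree bound (ii). Given $\si = \{e_1, \ldots, e_s\} \subset E(G')$, any $H' \in \cH$ containing $\si$ is determined by choosing how the edges of $\si$ are assigned to the $a_1 \cdots a_r$ edge-slots of $H$ (a bounded number of choices) and then extending the resulting partial embedding. Each such choice pins down some $t_i \le a_i$ vertices in part $A_i$, after which the number of extensions is controlled via the codegree bounds from the preprocessing. The delicate point is that trivial bookkeeping provides only a bound of order $\card{\cH}/(k^{s-1} e(G))$, whereas (ii) demands an additional factor of $k^{\al(s-1)}$ with $\al = 1/(a_1\cdots a_r - 1)$; this extra saving must be squeezed out of the polylogarithmic codegree control secured in the preprocessing, by arguing that the "worst-case" shapes of $\si$ (those sharing as many vertices as possible across the $r$ parts, in a clique-like pattern) cannot occur too densely in $G'$. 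Handling this case analysis uniformly in $s$ and in the shape of $\si$ is where the bulk of the technical work concentrates.
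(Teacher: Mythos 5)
The proposal takes a genuinely different route from the paper, and it has a real gap where the student acknowledges the difficulty but does not resolve it.

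The paper first proves an intermediate result, \cref{thm:supersat-r-partite}, in which degrees are indexed by ordered tuples of vertex sets $(S_1,\ldots,S_r)$ and bounded by the explicit thresholds $D^{(\card{S_1},\ldots,\card{S_r})}(\de,k,n)$, and then derives \cref{thm:supersat-compl} from it by a translation step. The proof of \cref{thm:supersat-r-partite} is an \emph{iterative} construction built around \cref{prop:supersat-complete}: any ``good'' collection $\cH$ (meaning no subtuple of any member is saturated, with saturation defined by the thresholds $D^{(b_1,\ldots,b_r)}$) of size at most $\de k^{a_1\cdots a_r}n^{a_1+\ldots+a_{r-1}}$ can be extended by one more copy while remaining good. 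The new copy is constructed part by part (\cref{claim:good-tuples}), at each step choosing $u_j$ outside the bounded ``forbidden'' set $X_i$ controlled by \cref{claim:complete-saturated}; the codegree bound (ii) then falls out automatically from the saturation thresholds, since $\cH$ is good by construction at every stage.

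Your proposal instead cleans $G$ to a subgraph $G'$ with polylogarithmically controlled set-codegrees and takes $\cH$ to be \emph{all} labelled copies of $H$ in $G'$. The Jensen step for (i) has a counterpart in the paper (Subclaim 1 inside \cref{claim:good-tuples}), but there it is applied only to good tuples. The genuine gap is exactly where you say ``the bulk of the technical work concentrates,'' and it is not just technical: condition (ii) forces the degree of an edge-set $\si$ to drop by a factor $k^{1+\al}>k$ for each additional edge of $\si$, which is strictly stronger than what codegree cleaning plus ``take all copies'' can deliver. Controlling codegrees of vertex sets does not control degrees of edge sets: distinct edges of $\si$ can pin down the same vertices, and small dense substructures that survive your cleaning can carry edge-sets whose degree exceeds the bound by polynomial factors in $k$. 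Moreover, your cleaning step itself (forcing \emph{all} $i$-set codegrees down to roughly extremal values for all $i\le r-1$, while deleting at most half the edges) is not justified and is not what the paper does; the paper's preprocessing \eqref{eq:complete-good-edge} only deletes the (few) edges that are themselves saturated. The saturation-plus-incremental machinery is precisely what lets the paper sidestep these obstacles, and it is absent from the proposal.
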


We thus confirm Morris' and Saxton's conjecture for most theta graphs and the corresponding statements for hypergraphs for most complete $r$-partite $r$-graphs. 
\cref{thm:count-theta} and \cref{thm:count-compl} follow immediately from \cref{prop:supersat-count} combined with \cref{thm:supersat-theta} and \cref{thm:supersat-compl}.
The subsequent work of Ferber, McKinley and Samotji \cite{Ferber2017} establishes a weaker, but significantly easier to prove, supersaturation result that is still sufficiently strong to derive $F_r(n,H)=2^{\exn_r(n,H})$ for a much larger class of $r$-graphs $H$. However, the result of \cite{Ferber2017} is not strong enough to imply anything non-trivial for the Tur\'an problem in random hypergraphs.

We will prove \cref{thm:supersat-theta} in Section 2, \cref{thm:supersat-compl} in Section 3.
We will use in these sections the slightly informal notation $\e \ll \tilde \e$ if $\e \leq c \cdot \tilde \e$ for a sufficiently small constant $c>0$. 

%%%%%%%%%%%%%%%%%%%%%%%%%%%%%%%%%%%%%%%%%%%%%%%%%%%%%%%%%%%%%%%%%%%%%%%%%%%
%%%%%%%%%%%%%%%%%%%%%%%%%%%%%%%%%%%%%%%%%%%%%%%%%%%%%%%%%%%%%%%%%%%%%%%%%%%

\section{Theta graphs}
For $n,k,j \in \N $ and $\de >0$, let \[ \De^{(j)}(\de,k,n) := \frac{k^{ab-1} \cdot n^{1-1/b}}{\left( \de k^{b/(b-1)} \right)^{j-1}}. \]

\begin{defi}
	 Let $a,b,n,k \in \N$ with $a,b \geq 2$, let $\de >0$ and let $G$ be an $n$-vertex graph with $kn^{1+1/b}$ edges. A collection $ \cH $ of copies of $\theta_{a,b}$ in $G$  is \emph{good} for $(a,b,k,n,\de) $ (or simply good if the parameters are understood) if $d_{\cH}(\si) \le \De^{(\card{\si})}(\de,k,n)$ for every non-empty forest $\si \subset E(G)$.
\end{defi}

The aim of this section is to prove the following theorem.

\begin{thm}\label{thm:supersatcyclefree}
	For all $a,b \geq 2 $, there are some positive constants $k_0$ and $\de$, such that for all $ k \ge k_0$ and all graphs $G$ with $n$ vertices and $ kn^{1+1/b}$ edges, there exists a family $ \cH$ of copies of $\theta_{a,b}$ in $G$ of size $|\cH| \ge \de k^{ab} n^2 $ which is good for ($a,b,k,n,\de$).
\end{thm}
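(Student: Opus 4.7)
The plan is to follow the Morris--Saxton strategy for $C_{2\ell}$, replacing pairs of length-$\ell$ paths by $a$-tuples of internally-disjoint length-$b$ paths. First, by a standard iterative deletion of low-degree vertices, I would pass to a subgraph $G'\subseteq G$ with $e(G')\geq e(G)/2$ and minimum degree at least $d/4$, where $d := 2kn^{1/b}$; all subsequent work takes place inside $G'$.

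The core construction produces a collection $\cP$ of length-$b$ paths in $G'$ that is both large and well-spread in a sense tailored to the bound $\De^{(j)}(\de,k,n)$. I would build $\cP$ greedily, growing path-trees of depth $b$ from each vertex by breadth-first search, and at each depth $j\leq b$ imposing a threshold on the codegree of length-$j$ sub-paths (and, more generally, of $j$-edge sub-forests) in the current $\cP$. The thresholds are calibrated so that (i) only a constant fraction of length-$b$ paths are ever blocked, yielding $|\cP|\gtrsim k^{b}n^{2}$, and (ii) the resulting forest codegrees in $\cP$ push forward to the $\De^{(j)}(\de,k,n)$ bound on $\cH$. I would then form $\cH$ by selecting, for each pair $\{u,v\}$ with $|\cP(u,v)|\geq a$ (where $\cP(u,v)$ denotes the $u$--$v$ paths in $\cP$), every unordered $a$-tuple of internally-disjoint paths in $\cP(u,v)$. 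Combining the size bound on $\cP$ with Jensen's inequality --- together with a standard estimate to discard the $a$-tuples that share internal vertices --- gives $|\cH|\geq \de k^{ab}n^{2}$.

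To verify $d_{\cH}(\sigma)\le \De^{(|\sigma|)}(\de,k,n)$ for every non-empty forest $\sigma\subset E(G')$, I would decompose $\sigma$ according to how its edges are distributed among the $a$ length-$b$ paths of a candidate $\theta_{a,b}$: each such decomposition writes $\sigma$ as a disjoint union $\sigma_{1}\sqcup\cdots\sqcup\sigma_{a}$ with $\sum_i|\sigma_i|=j$ and $\sigma_i$ a sub-forest of the $i$-th path. The number of extensions of each $\sigma_i$ to a full path of $\cP$ is controlled by the codegree thresholds from the previous step; multiplying these bounds and summing over the (bounded) number of such decompositions then recovers $\De^{(j)}(\de,k,n)$, up to an absolute constant absorbed by shrinking $\de$.

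I expect the main obstacle to be calibrating the thresholds so that all three demands --- size of $\cP$, cleanup of internal-vertex collisions in the $a$-tuples, and the precise geometric rate $(\de k^{b/(b-1)})^{-(j-1)}$ in the codegree bound --- hold simultaneously. The non-obvious exponent $b/(b-1)$ emerges from balancing the number of length-$b$ extensions of a length-$j$ sub-path against the threshold imposed on those extensions, in the same way Morris and Saxton obtain their $k^{\ell/(\ell-1)}$ factor for $C_{2\ell}$. A further subtlety is the case in which $\sigma$ meets both degree-$a$ vertices of $\theta_{a,b}$, which forces neighbouring sub-paths $\sigma_i$ to share endpoints and so requires a more careful estimate.
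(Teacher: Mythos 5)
Your proposal diverges from the paper at two structural points, and the first of them is a genuine gap.

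The paper does not construct $\cH$ in one shot from a well-spread path family $\cP$. It proves \cref{thm:supersatcyclefree} by iterating \cref{prop:addtheta}: starting from $\cH=\emptyset$, it repeatedly produces a single new copy of $\theta_{a,b}$ whose addition preserves goodness, using as a black box the notion of \emph{saturated} edge-sets $\cF$ and the link bound of \cref{lemma:linkbound}. The advantage of the iterative strategy is that at every step one may delete the (few) edges whose $\cH$-degree has already become large and condition on all forests being unsaturated; trying to enforce the forest-codegree bound globally on a pre-built $\cP$ and then deduce the bound on $\cH$ after forming $a$-tuples is much harder, because many $a$-tuples can share a common path or a common endpoint pair, and nothing in the codegree constraints on $\cP$ prevents $|\cP(u,v)|$ from concentrating on a small set of pairs $\{u,v\}$; an edge incident to such a vertex then sits in far too many members of $\cH$.

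The more serious flaw is the assumption that one can grow a breadth-first tree to depth $b$ with the minimum degree $\asymp kn^{1/b}$ and extract length-$b$ paths. This only makes sense when $k$ is bounded: once $k$ is large, $(kn^{1/b})^t \ge n$ already for some $t < b$, so the BFS saturates at depth $t$ and cannot keep expanding. This is exactly the difficulty that Morris and Saxton's \emph{refined $t$-neighbourhoods} (\cref{thm:refined}) are designed to solve: one stops the forward BFS at the depth $t \in \{2,\dots,b\}$ at which expansion fails, observes that then every vertex of $B_t$ has many ``back-neighbours'' in $B_{t-1}$ (\cref{P:bwneighb}), and completes each branch of $\theta_{a,b}$ to length $b$ by a zigzag walk $Z_j$ alternating between $B_{t-1}$ and $B_t$ (Steps 2--3 of \cref{alg}). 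Your proposal never addresses the non-expanding regime at all, and without the zigzag mechanism there is simply no supply of length-$b$ paths to collect. The remaining counting in the paper (Claims 2--5, Steps 1--3) is then built entirely around this $P_j / Z_j$ decomposition and the forest structure of the saturated sets, none of which appears in your sketch.
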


\cref{thm:supersatcyclefree} easily implies \cref{thm:supersat-theta}. Indeed, for every $\sigma \subset E(G)$ with $1 \leq \card \sigma \leq ab$ and $d_\cH(\si) >0$, take a forest $\sigma' \subset \sigma$ of maximal size and note that 
\[d_{\cH}(\si) \le d_{\cH}(\sigma') \leq \De^{(\card{\si'})}(\de,k,n) \leq \frac{k^{ab-1} \cdot n^{1-1/b}}{\left( \de k^{b/(b-1)} \right)^{\card {\sigma'}-1}} \leq \frac{k^{ab-1} \cdot n^{1-1/b}}{\left( \de k^{1 + \al} \right)^{\card{\si}-1}},\] where $ \al = 1/(ab-1)$. 
We remark that the worst case for the last inequality is when $|\sigma|=ab$ and $|\sigma'|=ab-a+1$.
\cref{thm:supersatcyclefree} in turn is an immediate consequence of the following proposition.

\begin{prop}\label{prop:addtheta}
    For all $a,b \ge 2 $, there are some positive constants $k_0$ and $\de >0$ such that for all $ k \ge k_0$ and all graphs $G$ with $n$ vertices and $ kn^{1+1/b}$ edges, the following is true. If $ \cH $ is a collection of copies of $ \theta_{a,b} $ in $ G $ which is good for $(a,b,k,n,\de)$ and $ |\cH| \le \de k^{ab}n^2$, then there exists a copy $H \not \in \cH$ of $ \theta_{a,b} $ such that $ \cH \cup \{H\} $ is good for $(a,b,k,n,\de)$.
\end{prop}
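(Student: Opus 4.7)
I follow the iterative strategy of Morris-Saxton. Call a copy $H$ of $\theta_{a,b}$ in $G$ \emph{safe} if $H\notin\cH$ and every non-empty forest $\sigma\subset E(H)$ satisfies $d_\cH(\sigma)<\Delta^{(|\sigma|)}(\delta,k,n)$, so that appending $H$ preserves goodness (the only affected degrees are the $d_\cH(\sigma)$ with $\sigma\subset E(H)$, each of which grows by exactly one from a value strictly below its threshold). The proposition is the existence of a safe $H$. A copy is \emph{unsafe} iff $H\in\cH$ or $H$ contains a \emph{saturated} forest $\sigma\subset E(G)$ (one with $d_\cH(\sigma)=\Delta^{(|\sigma|)}$), so it suffices to prove $U<N$, where $U$ is the number of unsafe copies and $N$ is the total number of copies of $\theta_{a,b}$ in $G$.

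\textbf{Main steps.} First I would establish the supersaturation bound $N\ge c_1(a,b)\,k^{ab}n^{2}$ by a Jensen-type argument: convexity applied to the degree sequence of $G$ gives a lower bound on the number of length-$b$ walks, and convexity applied once more to $\sum_{u,v}\binom{p_b(u,v)}{a}$, where $p_b(u,v)$ denotes the number of $b$-paths from $u$ to $v$ in $G$, converts this into the required count of $\theta_{a,b}$-copies. Next, for each $j\in\{1,\ldots,ab\}$, double-counting the pairs (copy in $\cH$, size-$j$ forest subgraph) gives at most $F_j|\cH|/\Delta^{(j)}$ saturated forests of size $j$, where $F_j$ is the number of $j$-edge forests in $\theta_{a,b}$. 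The third, crucial ingredient is an \emph{extension lemma}: for every forest $\sigma\subset E(G)$ of size $j$, the number of copies of $\theta_{a,b}$ in $G$ containing $\sigma$ is at most some $M_j$ satisfying $F_jM_j/\Delta^{(j)}(\delta,k,n)\le C(a,b)$. Combining these gives
\[
U\;\le\;|\cH|+\sum_{j=1}^{ab}\frac{F_j|\cH|\,M_j}{\Delta^{(j)}}\;\le\;C'(a,b)\,|\cH|\;\le\;C'(a,b)\,\delta\,k^{ab}n^{2},
\]
which is strictly less than $c_1k^{ab}n^2\le N$ once $\delta=\delta(a,b)$ is chosen small enough and $k_0=k_0(a,b,\delta)$ is large enough.

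\textbf{Main obstacle.} The extension lemma is the crux of the proof. A forest $\sigma$ sitting inside some $\theta_{a,b}$-template decomposes as a disjoint union of sub-paths along the $a$ internally disjoint hub-to-hub paths, with at most one full path wholly contained in $\sigma$ (else $\sigma$ would contain a cycle); extending $\sigma$ to a full $\theta_{a,b}$ amounts to completing each partially-used path-slot by a path of the appropriate length and adding whole $b$-paths between the hubs for each unused slot. These completions are controlled by a path-count estimate of the form ``between any two vertices of $G$, the number of $\ell$-paths is $O(k^\ell)$'', which holds on average but typically fails at exceptional vertex pairs; as in Morris-Saxton one must therefore first pass to a subgraph $G'\subset G$ by deleting edges through heavy vertex pairs, which preserves all but a small fraction of $\theta_{a,b}$-copies while forcing the estimate to hold uniformly. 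The exponents in $\Delta^{(j)}=k^{ab-1}n^{1-1/b}/(\delta k^{b/(b-1)})^{j-1}$ are calibrated precisely so that $M_j$ decreases by the same factor as $\Delta^{(j)}$ when $j$ grows by one, which is what makes the bound on $F_jM_j/\Delta^{(j)}$ uniform in $j$ and permits the telescoping sum above.
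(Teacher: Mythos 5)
Your strategy---count all copies $N$ of $\theta_{a,b}$ in $G$, bound the number $U$ of ``unsafe'' copies via saturated forests and an extension lemma, and conclude $U<N$---is genuinely different from the paper's proof, and unfortunately it has a gap that I do not believe can be repaired along the lines you indicate.

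The crux, as you correctly identify, is the extension bound: you need that every $j$-edge forest $\sigma\subset E(G)$ lies in at most $M_j$ copies of $\theta_{a,b}$ with $M_j\lesssim\De^{(j)}(\de,k,n)$. This is false as a statement about arbitrary $G$, and the failure is not confined to a negligible fraction of edges. Take $G$ to be a clique $K_m$ together with $n-m$ isolated vertices, where $m\approx\sqrt{2kn^{1+1/b}}$, so that $e(G)=kn^{1+1/b}$. Every edge of $K_m$ lies in $\Theta\bigl(m^{a(b-1)}\bigr)$ copies of $\theta_{a,b}$, and one checks that $m^{a(b-1)}$ exceeds $\De^{(1)}=k^{ab-1}n^{1-1/b}$ by a polynomial factor in $n$ (the exponent of $n$ in $m^{a(b-1)}$ is $\tfrac{a(b-1)(b+1)}{2b}$, which is strictly larger than $\tfrac{b-1}{b}$ for all $a,b\ge 2$). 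Deleting ``heavy vertex pairs'' does not rescue this: \emph{every} pair inside $K_m$ is heavy, so the pruning would delete all edges and with them all $\theta_{a,b}$'s, making your lower bound $N\ge c_1k^{ab}n^2$ impossible to reinstate on the pruned graph.

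There is also a quantitative hint that something is off. The paper does not attempt to show that most of the $\Theta(k^{ab}n^2)$ copies of $\theta_{a,b}$ in $G$ are safe; indeed its constructed family $\Theta$ has size only $\Theta(k^{ab}n)$, which is typically much \emph{smaller} than $|\cH|\le\de k^{ab}n^2$. So a global comparison $U<N$ is both unnecessary and, in examples like the one above, false. Instead the paper fixes a vertex $x$, invokes \cref{thm:refined} to extract a refined $t$-neighbourhood of $x$, and runs \cref{alg}, which \emph{constructs} copies of $\theta_{a,b}$ that avoid every saturated forest by design (through the conditions involving $L_\cF^{(1)}(E)$ and $L_\cF(E)$ at each step). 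The resulting collection $\Theta$ lies entirely above a single edge incident to $x$, so $|\Theta\cap\cH|$ can be bounded by $|B_1|\cdot\max_e d_\cH(e)\le kn^{1/b}\cdot\e(1)^{2ab+1}k^{ab-1}n^{1-1/b}$, a local quantity; comparing this with $|\Theta|\ge\e(1)^{2ab}k^{ab}n$ produces a safe copy outside $\cH$. The key lemmas supporting this are \cref{lemma:linkbound} (bounding the link of any edge set) and properties \ref{P:manypaths}--\ref{lemma:bal-paths-cont-edges} of the refined $t$-neighbourhood, none of which is a worst-case extension bound over the whole of $G$.

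To repair your argument you would essentially need to replace the global extension lemma with the local machinery (refined $t$-neighbourhoods plus the link bound), at which point you are reproducing the paper's proof rather than simplifying it.
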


The rest of this section is devoted to the proof of \cref{prop:addtheta}.

\subsection{The setup}\label{section:setup}
We define all constants here and fix the important parameters.
Let $a,b \geq 2$ and set $ K = 5ab$, $\e(b) = 1/K^3$, $\e(t-1) = \e(t)^t $ for each $ 2 \le t \le b $, $\de = \e(1)^{2ab+2} $ and $ k_0 = 1/\de $.
Let $n,k \in \N$ with $ k \ge k_0$, and fix a graph $G$ with $n$ vertices and $kn^{1+1/b} $ edges.
Also fix a good collection $ \cH$ of copies of $ \theta_{a,b} $ in $ G $ with $\card{\cH} \le \de k^{ab}n^2$.

We will make the following further assumptions on $G$.
Since $ \de = \e(1)^{2ab+2} $, there are at most 
\[ 
\frac{ab \cdot |\cH|}{\e(1)^{2ab+1} k^{ab-1} n^{1-1/b}} \le ab\cdot \e(1)\cdot e(G)  \ll e(G) 
\] 
edges $ e \in G $ with $d_{\cH}(e) \ge \e(1)^{2ab+1} k ^{ab-1} n^{1-1/b}$.
By deleting all such edges we may assume
\begin{equation}
d_{\cH}(e) < \e(1)^{2ab+1} k ^{ab-1} n^{1-1/b} \text{ for every } e \in E(G) \label{eq:degreebound1}
\end{equation}
(at the cost of slightly weaker constants). In particular, we have
\begin{equation}
d_{\cH}(e) < \De^{(1)}(\de,k,n) \text{ for every } e \in E(G).\label{eq:degreebound2}
\end{equation}

Similarly, since there are at most $K\e(b)kn^{1+1/b} \ll e(G) $ edges incident to vertices of degree at most $K\e(b)kn^{1/b}$, we may assume that
\begin{equation}
	\de(G) \ge K\e(b)kn^{1/b}.
\end{equation}
Finally, we define \emph{saturated} sets of edges.
\begin{defi}[Saturated sets of edges]
Given a non-empty forest $\si \subset E(G) $, we say that $ \si $ is \emph{saturated} if $ d_{\cH}(\si ) \ge \left\lfloor \De^{(\card{\si})}(\de,k,n) \right\rfloor $. Let 
\[ \cF = \left\{ \si \subset E(G) : \si \text{ is saturated} \right\}
\]
denote the collection of all saturated sets of edges.
\end{defi}
We emphasize that in all further results $G$, $\cH$, $\cF$ and all parameters are fixed as above.

\subsection{Preliminaries}

For $ S \subset E(G) $ and $ j \in \N $, define the \emph{$j$-link} of $S$ as \[L_\cF^{(j)}(S) := \left\{ \si \subset E(G) \setminus S : |\si| = j \text{ and } \si \cup \tau \in \cF \text{ for some non-empty } \tau \subset S \right\},\] and let $ L_\cF(S) = \bigcup_{j\ge 1} L_\cF^{(j)}(S)$. We have the following important bound on its size.

\begin{lemma}\label{lemma:linkbound}
For every $ j \in \N $ and every $ S \subset E(G) $, we have 
\[
|L_\cF^{(j)} (S)| \le 2^{ab +\card{S}+1} \cdot \left( \de k^{b/(b-1)} \right)^j .
\]
\end{lemma}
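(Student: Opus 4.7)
The plan is to decompose the link according to the non-empty witness $\tau \subset S$ and then run a standard double-counting argument on each piece. For a non-empty forest $\tau \subset S$ of size $t$, set
\[
L_\tau := \{\si \subset E(G) \setminus S : |\si| = j \text{ and } \si \cup \tau \in \cF\}.
\]
By definition $L_\cF^{(j)}(S) \subset \bigcup_\tau L_\tau$, and the union can be restricted to $\tau$ that are themselves forests, since otherwise $\si \cup \tau$ is not a forest and hence cannot lie in $\cF$. As there are at most $2^{\card S}$ such $\tau$, it suffices to show that $\card{L_\tau} \le 2^{ab+1}\bigl(\de k^{b/(b-1)}\bigr)^j$ for each fixed non-empty forest $\tau \subset S$.

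Fix such a $\tau$ of size $t$ and double-count the pairs $(\si, H) \in L_\tau \times \cH$ with $\si \cup \tau \subset E(H)$. On the one hand, saturation of $\si \cup \tau$ yields
\[
\sum_{\si \in L_\tau} d_\cH(\si \cup \tau) \ge \card{L_\tau} \cdot \bigl\lfloor \De^{(j+t)}(\de, k, n) \bigr\rfloor.
\]
On the other hand, regrouping the same sum by $H$, each copy $H \in \cH$ containing $\tau$ contributes at most $\binom{ab - t}{j} \le 2^{ab}$ choices of $\si \subset E(H) \setminus \tau$, so the sum is bounded above by $d_\cH(\tau)\binom{ab}{j}$, and the goodness of $\cH$ applied to the forest $\tau$ gives $d_\cH(\tau) \le \De^{(t)}(\de, k, n)$.

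Combining the two estimates, using $\lfloor x \rfloor \ge x/2$ for $x \ge 1$, and invoking the algebraic identity $\De^{(t)}(\de,k,n)/\De^{(j+t)}(\de,k,n) = (\de k^{b/(b-1)})^j$ that follows directly from the definition of $\De^{(\cdot)}$, one obtains
\[
\card{L_\tau} \le 2\binom{ab}{j}\bigl(\de k^{b/(b-1)}\bigr)^j \le 2^{ab+1}\bigl(\de k^{b/(b-1)}\bigr)^j,
\]
and summing over the at most $2^{\card S}$ admissible $\tau$ yields the claim. The only delicate point I anticipate is the degenerate regime $\De^{(j+t)} < 1$, where the floor vanishes and the saturation bound becomes vacuous; but there $(\de k^{b/(b-1)})^j = \De^{(t)}/\De^{(j+t)}$ is already so large, and $j+t$ necessarily exceeds $ab$ (so that $d_\cH(\si \cup \tau) = 0$), that $\card{L_\tau}$ can be controlled by a trivial direct count of forest extensions of $\tau$. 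I do not expect any conceptual obstacle in this boundary case.
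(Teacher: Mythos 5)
Your argument is the same as the paper's: partition the link by the non-empty forest witness $\tau\subset S$, double-count members of $\cH$ against $d_\cH(\tau)\le\De^{(\card\tau)}$ via the handshaking lemma (losing a factor $2^{ab}$), use saturation of $\si\cup\tau$ to get the $\lfloor\De^{(\card\tau+j)}\rfloor$ lower bound, and divide. One caveat about your closing aside: the claim that $\De^{(j+t)}<1$ forces $j+t>ab$ does not follow from the definitions (for $k$ near its maximum $\Theta(n^{(b-1)/b})$ and $n$ large, $\De^{(ab)}$ itself can already drop below $1$), so the proposed ``trivial direct count'' fallback is not substantiated; but since the paper's own proof silently divides by $\lfloor\De^{(\card\tau+j)}\rfloor$ without addressing this regime either, this does not put you behind the paper's argument.
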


\begin{proof}
		For each non-empty forest $\tau \subset S$, set
		\[
		\cJ(\tau)=\{ \si \subset  E(G) \setminus S: \card \si = j \ \text{and} \ \si \cup \tau \in \cF \}.
		\]
		By the handshaking lemma and the definition of goodness, we obtain
		\[
		\frac{1}{2^{ab}}\cdot\sum_{\si \in \cJ(\tau)}d_{\cH}(\si \cup \tau) \le d_\cH(\tau) \le \De^{(\card \tau)}(\de,k,n),
		\]
		as each edge of $\cH$ is counted at most $2^{ab}$ times in the sum.
		Moreover,
		\[
		\sum_{\si \in \cJ(\tau)}d_{\cH}(\si \cup \tau) \ge \card{\cJ(\tau)}\cdot \lfloor \De^{(\card \tau +j)}(\de,k,n) \rfloor,
		\]
		by the definition of $\cJ(\tau)$ and $\cF$. Hence
		\[
		\card{\cJ(\tau)} \le 
		2^{ab} \cdot \frac{\De^{(\card \tau)}(\de,k,n)}{\lfloor \De^{(\card \tau +j)}(\de,k,n) \rfloor} \le 2^{ab+1}\cdot (\de k^{b/(b-1)})^j.
		\]
		Finally, since the sets $\cJ(\tau)$ cover $L_\cF^{(j)} (S)$, we find that
		\[
		|L_\cF^{(j)} (S)|\le \sum_{\tau}\card{\cJ(\tau)} \le 2^{ab+S+1} \cdot( \de k^{b/(b-1)} )^j, 
		\]
		as desired.
\end{proof}

The following definition and theorem summarise a series of results of Morris and Saxton (see \cite[Section 3]{MorrisSaxton}) which we will use in a similar way to build copies of $\theta_{a,b}$.

\begin{defi}
  Let $ x \in V(G) $ and $2 \le t \in \N$. A \emph{$t$-neighbourhood} of $x$ is a pair $(\cA,\cP)$, in which 
  \begin{itemize}
  \item $\cA=(A_0,A_1,\ldots,A_t)$ is a collection of (not necessarily disjoint) sets of vertices of $G$ with $ A_0 = \{x\} $,
  \item $\cP$ is a collection of paths in $G$ of the form $(x,u_1,\ldots,u_t)$, with $u_i\in A_i$ for each $i\in [t]$.
  \end{itemize}
\end{defi}

For any collection $\cP$ of paths in $G$ and any two vertices $u,v \in V(G)$, let
\[\cP[u \to v]:=\{(x_1,\ldots,x_s) : x_1 = u, x_s = v \} \] 
denote the set of paths in $\cP$ which begin at $ u $ and end at $v$.

\begin{thm}[Morris--Saxton \cite{MorrisSaxton}]\label{thm:refined}
    Given $G,\cH,\cF $ and all constants as in \cref{section:setup},
    there exist $t \in \{2,\ldots,b\}$ and some vertex $x \in V(G)$, for which there is a $t$-neighbourhood $(\cB=\left(B_0,\ldots,B_t),\cQ\right)$ of $x$ with the following seven properties:
    \begin{enumerate}[label=\normalfont{(P\arabic*)}]
    \item \label{P:size} $ \card{B_1} \le kn^{1/b}$ and $ \card{B_t} \le k^{(b-t)/(b-1)}n^{t/b}$.
    \item \label{P:fwneighb} For every $i \in \{0,1,\ldots,t-1\}$ and every $ u \in B_i$, 
    \[
    \card{N(u) \cap B_{i+1}} \ge \e(t)kn^{1/b}.
    \]
    \item \label{P:bwneighb} For every $ v \in B_t $, 
    \[
    \card{N(v) \cap B_{t-1}} \ge \e(t)^2 k^{b/(b-1)}.
    \]
    \item \label{P:manypaths} For every $ v \in B_t $, 
    \[
    \card{\cQ[x \to v]} \ge \e(t)^t k^{(t-1)b/(b-1)}.
    \]
    \item \label{P:avoidF} $\cQ$ avoids $\cF$, i.e.\ $ \sigma \not \subset Q$ for every $\sigma \in \cF$ and every $Q \in \cQ$.
    \item \label{lemma:bal-paths-cont-vertex} For every $w \in B_t$ and $v \in V(G) \setminus \{x,w\} $, there are at most $ b k^{(t-2)b/(b-1)} $ paths $ Q \in \cQ[x \to w] $ containing $v$.
    \item \label{lemma:bal-paths-cont-edges} For every $ \si \subset E(G)$ with $ |\si| \le t-1 $ and every $ w \in B_t$, there are at most $ t^t \cdot k^{(t - |\si| - 1) b/(b-1)} $ paths $ Q \in \cQ[x \to w] $ with $\si \subset E(P)$. 
    \end{enumerate}
\end{thm}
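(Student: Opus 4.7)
The plan is to adapt the construction of Morris--Saxton \cite{MorrisSaxton} (developed there for $C_{2k}$) to the present setting: grow a carefully pruned breadth-first-search structure of depth $t$ from some chosen vertex $x$, and then refine the collection of $x$-rooted length-$t$ walks to obtain $\cQ$.

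First I would select the pair $(t,x)$. Since the minimum degree of $G$ is at least $K\e(b)kn^{1/b}$ after the cleanup in \cref{section:setup}, a BFS from any vertex expands by roughly a factor of $kn^{1/b}$ at each step, unless the expansion is blocked by collisions. I would define $t$ as the largest integer in $\{2,\ldots,b\}$ for which there is a positive fraction of vertices $x$ whose pruned $t$-th BFS layer still has size at most $k^{(b-t)/(b-1)}n^{t/b}$. Such a $t$ exists: if no $t \le b$ worked, iterating the lower bound on BFS expansion would force $G$ to contain a vertex set larger than $n$, a contradiction. This is the core counting step from \cite{MorrisSaxton}.

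Now I would build $\cB = (B_0, \ldots, B_t)$ with $B_0 = \{x\}$ and successively apply two rounds of pruning. The \emph{forward pruning} iteratively deletes from $B_i$ any $u$ with $|N(u) \cap B_{i+1}| < \e(t)kn^{1/b}$; since each round removes only a small fraction of the current layer, the process stabilises and yields (P2). The \emph{backward pruning} deletes from $B_t$ any $v$ with $|N(v) \cap B_{t-1}| < \e(t)^2 k^{b/(b-1)}$, giving (P3). A preliminary cleanup (discarding the $o(e(G))$ edges incident to abnormally high-degree vertices) ensures $|B_1| \le kn^{1/b}$, completing (P1). Let $\cQ^{(0)}$ be the set of length-$t$ paths from $x$ through $B_1,\ldots,B_{t-1}$ to $B_t$; iterating (P2) along such a path and using (P3) at the last step yields the endpoint-wise lower bound $|\cQ^{(0)}[x \to v]| \ge \e(t)^t k^{(t-1)b/(b-1)}$, with enough slack to absorb the losses incurred when enforcing (P5)--(P7).

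To obtain (P5) I would remove from $\cQ^{(0)}$ every path that contains some $\sigma \in \cF$. \cref{lemma:linkbound} with $S = \emptyset$ bounds the number of saturated sets; multiplying by a trivial upper bound on the number of paths through a fixed $\sigma$ and summing over $j$ and over $\sigma$ destroys only an $O(\e(t))$-fraction of $\cQ^{(0)}$, because $\delta$ was chosen sufficiently small in \cref{section:setup}. Properties (P6) and (P7) then follow from one more averaging step: for each candidate $v$ (respectively edge set $\sigma$), the average of $|\{Q \in \cQ[x\to w] : v \in V(Q)\}|$ (respectively $|\{Q \in \cQ[x\to w] : \sigma \subset E(Q)\}|$) over $w \in B_t$ is at most the required threshold by double counting, and discarding the exceptional endpoints costs only an $\e(t)$-fraction of $B_t$, leaving (P1)--(P4) intact. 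The main obstacle is the delicate bookkeeping by which the losses at each stage---small-degree pruning, saturated-set removal, concentration removal---each consume only an $\e$-sized fraction, so that the final lower bound in (P4) matches the definitions in \cref{section:setup}; the cascade $\e(b) \gg \e(b-1) \gg \ldots \gg \e(1)$ is engineered precisely to absorb these losses, but verifying the cascade layer by layer is the technical heart of the argument.
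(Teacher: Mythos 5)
The paper does not actually reprove this statement: it is quoted from Section~3 of \cite{MorrisSaxton}, the only modification being that in the proof of their Lemma~3.6 one invokes \cref{lemma:linkbound} in place of the corresponding lemma there. So your task was to reconstruct the Morris--Saxton argument, and your sketch misses its key mechanism. The central problem is your choice of $t$: you take the \emph{largest} $t\in\{2,\ldots,b\}$ for which the $t$-th layer has size at most $k^{(b-t)/(b-1)}n^{t/b}$, but at $t=b$ this bound reads $\card{B_b}\le n$ and holds trivially, so your rule always returns $t=b$ and carries no information. What the argument needs is essentially the \emph{first} $t$ at which the growth drops below this threshold, because then the previous layer still satisfies $\card{B_{t-1}}> k^{(b-t+1)/(b-1)}n^{(t-1)/b}$, and it is exactly the ratio $\card{B_{t-1}}\cdot \e(t)kn^{1/b}/\card{B_t}\gtrsim k^{b/(b-1)}$ that makes the backward degrees in (P3), and hence the path counts in (P4), large. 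Your sketch never uses the largeness of $B_{t-1}$: as written, the backward pruning could delete all of $B_t$, and (P4) does not follow from ``iterating (P2) and using (P3) at the last step'' --- iterating (P2) counts paths into the whole of $B_t$, not to a fixed endpoint; one must divide the total path count by the upper bound on $\card{B_t}$ from (P1) and then prune the endpoints receiving few paths, while checking the earlier properties survive.

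The route to (P5)--(P7) is also not viable as stated. \cref{lemma:linkbound} with $S=\emptyset$ is vacuous, since $L^{(j)}_\cF(\emptyset)=\emptyset$ (it requires a non-empty $\tau\subset S$), so it does not bound the number of saturated sets; in \cite{MorrisSaxton}, and in the modification the paper indicates, $\cF$ is avoided because $\cQ$ is built greedily path by path and each extension step forbids edges lying in the link of the partial path --- that is where \cref{lemma:linkbound} enters. Likewise (P6) and (P7) cannot be obtained by averaging over $w\in B_t$ and discarding exceptional endpoints: the exceptional set of $w$'s depends on the vertex $v$ (respectively on $\si$), and a union bound over all $n$ choices of $v$ and all $\Theta\bigl(e(G)^{\card{\si}}\bigr)$ choices of $\si$ costs far more than an $\e(t)$-fraction of $B_t$. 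In \cite{MorrisSaxton} these bounds are maintained as invariants of the greedy construction of $\cQ[x\to w]$ (a candidate path is accepted only if it does not overload any vertex or edge set), and one shows the constraints exclude few candidates. The overall shape of your plan (BFS layers, pruning, refinement of the path family) is right, but the extremal choice of $t$, the endpoint-wise path count, and the balancedness properties all require the actual Morris--Saxton mechanisms, which your sketch replaces by steps that fail.
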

 
 We shall call $(\cB,\cQ)$ a {\em refined $t$-neighbourhood} of $x$. Property \cref{P:avoidF} is slightly different here but completely analogous (in the proof of Lemma 3.6 in \cite{MorrisSaxton}, we need to use \cref{lemma:linkbound} instead of the corresponding lemma in \cite{MorrisSaxton}). 

\subsection{Finding \texorpdfstring{$\theta_{a,b}$}{theta} in refined \texorpdfstring{$t$}{t}-neighbourhoods}
Let $G,\cH, \cF$ and all constants be as in \cref{section:setup} and let $(\cB,\cQ)$ be the refined $t$-neighbourhood for some $x \in V(G)$ and $t \in \{2,\ldots,b\}$ guaranteed by \cref{thm:refined}.

For technical reasons fix \[ X_i(u) \subset N(u) \cap B_{i+1} \text{ of size } |X_i(u)|=\e(t)kn^{1/b} \] for each $i \in [t-1]$ and $u \in B_i$, and \[ X_t(u) \subset N(u) \cap B_{t-1} \text{ of size } |X_t(u)|=\e(t)^2k^{b/(b-1)} \] for each $ u \in B_t$. Furthermore, fix a subset \[ \cQ(z) \subset \cQ (x \to z)  \text{ of size } |\cQ(z) | = \e(t)^t k^{(t-1)b/(b-1)} \] for every $ z \in B_t $.

Using the following algorithm, we shall create many copies $ H $ of $ \theta_{a,b}$ in $ G $ such that $ \cH \cup \{H\} $ is good and deduce that one of them must not be contained in $ \cH$ already. 

\begin{alg}\label{alg}
Initially, let $ \Theta := \emptyset $. As long as possible generate new copies of $ \theta_{a,b}$ and add them to $ \Theta $ via the following process. To create a copy of $\theta_{a,b}$ we shall add edges and denote the subgraph of $G$ induced by the currently selected edges by $H=(V,E)$. (Note that $H,V$ and $E$ are constantly changing.)
\begin{enumerate}
    \item Generate a path $ P_1 = (x=p^{1}_{0}, p^{1}_{1}, \ldots, p^{1}_{t}) $ as follows. For $i=0,1,\ldots,t-1$, choose $p^{1}_{i+1}$ from $X_{i}(p^{1}_{i}) \subset N(p^{1}_{i})\cap B_{i+1}$ such that 
    \[
    p^{1}_{i+1} \notin V \quad \text{and} \quad \{p^{1}_{i},p^{1}_{i+1}\}\not \in L^{(1)}_\cF(E).
    \]
    \item Create a path $Z_1 = (p^{1}_{t}=z^{1}_{0}, z^{1}_{1}, \ldots, z^{1}_{b-t}=:y)$ as follows. Define
    \begin{equation*}
    r(i)=
    \begin{cases}
    t-1 & \text{if } 0 \le i \le b-t \ \text{and $i$ is even},\\
    t & \text{if }  0 \le i \le b-t \ \text{and $i$ is odd}.
    \end{cases}
    \end{equation*}
    For $i = 0, \ldots, b-t-1$, select $ z^{1}_{i+1}$ from $X_{r(i)}(z^{1}_{i})\subset N(z^{1}_{i})\cap B_{r(i+1)}$ such that 
    \[
    z^{1}_{i+1}\notin V \quad \text{and} \quad \{z^{1}_{i},z^{1}_{i+1}\} \not \in L^{(1)}_{\cF}(E).
    \]
    \item For $j = 2, \ldots, a$, create a path $ Z_j = (y= z^{j}_{0}, z^{j}_{1}, \ldots, z^{j}_{b-t}=:z_j) $ as follows. Let
    \begin{equation*}
    	s(i)=
    	\begin{cases}
    		t-1 & \text{if } 0 \le i \le b-t \ \text{and $i+(b-t)$ is even},\\
    		t  & \text{if } 0 \le i \le b-t \ \text{and $i+(b-t)$ is odd}.
    	\end{cases}
    \end{equation*}
    Now, for $i=0,\ldots,b-t-1$, choose $ z^{j}_{i+1}$ from $X_{s(i)}(z^{j}_{i}) \subset N(z^{j}_{i})\cap B_{s(i+1)}$ with 
    \[
    z^{j}_{i+1} \notin V \quad \text{and} \quad \{z^{j}_{i},z^{j}_{i+1}\} \notin L^{(1)}_{\cF}(E).
    \]	
    \item For $j=2, \ldots,a $, pick a path $ P_j \in \cQ(z_j)$ which uses no vertex of $V \setminus \{z_j\}$ and avoids $L_\cF(E)$.\\
    Join the paths $P_1,Z_1,\ldots,P_a,Z_a$ to form a copy of $ \theta_{a,b}$, and add this to $\Theta$. 
\end{enumerate}
\end{alg}

\begin{figure}[ht]
	\centering
	%%%%%%%%%%%%%%%%%%%%%%%tikz%%%%%%%
	\begin{tikzpicture}
	\edef \height {2cm}
	\edef \width  {0.4 * \height}
	\edef \incr {0.5* \height}
	\edef \vxdiam {0.1cm}
	\tikzset{vxstyle/.style={draw=black, fill = black!50, circle, inner sep=0cm, minimum width = \vxdiam}}
	\tikzset{potatostyle/.style={draw=black, ellipse, outer sep = 0cm, inner sep = 0cm, minimum width = \width, minimum height = \height}}
	\tikzset{->-/.style={decoration={
				markings,
				mark=at position #1 with {\arrow{>}}},postaction={decorate}}}
	
	\foreach \x in {1,2}{\coordinate (P\x) at ({(\x-1)*3*\width+ \width},0);}
	\foreach \x in {3,4}{\coordinate (P\x) at ({(\x+1)*3*\width+\width},0);}
	
	\node[vxstyle,label=below:$x$] (x) at (0,0) {};
	\node[vxstyle,label=below:$y$] (y) at (P3) {};
	
	\node[potatostyle, label=below:$B_1$] (B1) at (P1) {};
	\node[potatostyle, label=below:$B_2$, minimum height = {\height+\incr} ] (B1) at (P2) {};
	\node[potatostyle, label=below:$B_{t-1}$, minimum height = {\height+3*\incr}] (B1) at (P3) {};
	\node[potatostyle, label=below:$B_t$, minimum height = {\height+4*\incr}] (B1) at (P4) {};
	
	\coordinate (M) at ($(P2)!0.5!(P3)$);
	\foreach \x in {-1,0,1}{\draw[fill=black, draw=black] ($(M)+(\x*0.3cm,0)$) circle (0.05cm);}
	
	\if11
	\foreach \i in {-1,0,1}{
		\node[vxstyle] (v1\i) at ($(P1)+(0,\i*\height*0.4)$) {};
		\node[vxstyle] (v2\i) at ($(P2)+(0,{\i*(\height+\incr)*0.4})$) {};}
	
	\foreach \i in {-5,-3,2,3,4,5}{\node[vxstyle] (v3\i) at ($(P3)+(0,{\i*(\height+3*\incr)*0.09})$) {};}
	\foreach \i in {-5,-2,1,2,3,5}{\node[vxstyle] (v4\i) at ($(P4)+(0,{\i*(\height+3*\incr)*0.09})$) {};}

	\draw[blue,thick,decoration={markings,
		mark=at position 0.07 with {\arrow{>}},
		mark=at position 0.2 with {\arrow{>}},
		mark=at position 0.67 with {\arrow{>}},
		mark=at position 0.95 with {\arrow{>}}
	}, postaction={decorate}
	] (x) -- (v1-1) -- (v2-1) -- (v3-5) -- (v4-5);
	
	\draw[blue,thick,dotted,decoration={markings,
		mark=at position 0.25 with {\arrow{>}},
		mark=at position 0.55 with {\arrow{>}},
		mark=at position 0.88 with {\arrow{>}}
	}, postaction={decorate}
	] (v4-5) -- (v3-3) -- (v4-2) -- (y);
	
	\draw[red, thick,
	decoration={markings,
		mark=at position 0.02 with {\arrow{<}},
		mark=at position 0.15 with {\arrow{<}},
		mark=at position 0.4 with {\arrow{<}},
		mark=at position 0.88 with {\arrow{<}}
	}, postaction={decorate}
	] (x) -- (v10) -- (v20) -- (v33) -- (v43);
	
	\draw[red,thick,dotted,decoration={markings,
		mark=at position 0.15 with {\arrow{<}},
		mark=at position 0.44 with {\arrow{<}},
		mark=at position 0.82 with {\arrow{<}}
	}, postaction={decorate}
	] (v43) -- (v32) -- (v41) -- (y);
	
	\draw[green,thick,decoration={markings,
		mark=at position 0.025 with {\arrow{<}},
		mark=at position 0.17 with {\arrow{<}},
		mark=at position 0.43 with {\arrow{<}},
		mark=at position 0.88 with {\arrow{<}}
	}, postaction={decorate}
	] (x) -- (v11) -- (v21) -- (v35) -- (v45);
	
	\draw[green,thick,dotted,decoration={markings,
		mark=at position 0.1 with {\arrow{<}},
		mark=at position 0.4 with {\arrow{<}},
		mark=at position 0.81 with {\arrow{<}}
	}, postaction={decorate}
	] (v45) -- (v34) -- (v42) -- (y);
	
	\coordinate (M) at ($(P2)!0.5!(P3)$);
	\foreach \x in {-1,0,1}{\draw[fill=black, draw=black] ($(M)+(\x*0.3cm,0)$) circle (0.05cm);}
	\fi

	\node[draw=black,thick,rounded corners=2pt,above right=2mm] at ($(current bounding box.south west)-(0cm,4mm)$) {%
		\begin{tabular}{llll}
		\raisebox{2pt}{\tikz{\draw[blue, thick] (0,0) -- (5mm,0);}}&$P_1$&\raisebox{2pt}{\tikz{\draw[blue, thick, dotted] (0,0) -- (5mm,0);}}&$Z_1$\\
		\raisebox{2pt}{\tikz{\draw[red, thick] (0,0) -- (5mm,0);}}&$P_2$&\raisebox{2pt}{\tikz{\draw[red, thick, dotted] (0,0) -- (5mm,0);}}&$Z_2$\\
		\raisebox{2pt}{\tikz{\draw[green, thick] (0,0) -- (5mm,0);}}&$P_3$&\raisebox{2pt}{\tikz{\draw[green, thick, dotted] (0,0) -- (5mm,0);}}&$Z_3$\\
		\end{tabular}};
	
	\end{tikzpicture}
	%%%%%%%%%%%%%%%%%%%%%%%%%%%%%%%%%%%%%%%%%%
	\caption{A copy of $\theta_{3,b}$ produced by \cref{alg}.}
	\label{fig:algorithm}
\end{figure}

See \cref{fig:algorithm} for an illustration of \cref{alg}. We shall show later that $\card{\Theta}$ is quite large.

\begin{claim} \label{claim:R0}
$\card{\Theta} \ge \e(1)^{2ab}k^{ab}n$.
\end{claim}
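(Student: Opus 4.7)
The plan is to lower-bound $|\Theta|$ by computing, at each step of \cref{alg}, the number of valid greedy choices, multiplying them, and then dividing by the multiplicity at most $a!$ with which a single labelled copy of $\theta_{a,b}$ can be produced (this multiplicity comes from the $a!$ orderings of the $a$ constituent paths $P_j\cup Z_j$).

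At every greedy extension in Parts (1)--(3), the relevant candidate set $X_{r(\cdot)}$ or $X_{s(\cdot)}$ has size at least $\e(t)^2 k^{b/(b-1)}$ by properties \ref{P:fwneighb} and \ref{P:bwneighb} of \cref{thm:refined}. The restriction $z \notin V$ discards at most $|V| \le 2+a(b-1)$ candidates, which is negligible for $k\ge k_0$. The restriction $\{z',z\} \notin L^{(1)}_\cF(E)$ discards at most $|L^{(1)}_\cF(E)|$ candidates, which by \cref{lemma:linkbound} (applied with $|E| \le ab$) is at most $2^{2ab+2}\de k^{b/(b-1)}$. Because $\de=\e(1)^{2ab+2}$ is much smaller than $\e(t)^2$ (using $\e(1)\le\e(t)$) and $k$ is large, at least half of the candidates remain valid at every such step.

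For Part (4), I would use the stronger properties of $\cQ$. Property \ref{lemma:bal-paths-cont-vertex} bounds the number of paths in $\cQ[x\to z_j]$ through any single used vertex by $bk^{(t-2)b/(b-1)}$, and property \ref{lemma:bal-paths-cont-edges} together with \cref{lemma:linkbound} summed over link levels $j\ge 1$ bounds the number of paths meeting some element of $L_\cF(E)$ by roughly $2^{O(ab)}\de\e(t)^{-t}|\cQ(z_j)|$. Both losses are dominated by $|\cQ(z_j)|=\e(t)^tk^{(t-1)b/(b-1)}$ by our choice of $\de$, so again at least half the paths in $\cQ(z_j)$ remain valid.

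Multiplying the surviving halves over the $t$ edges of $P_1$, the $a(b-t)$ edges of $Z_1,\ldots,Z_a$ (with alternation dictated by $r$ and $s$), and the $(a-1)$ choices of $P_j$ for $j\ge 2$, and then dividing by $a!$, gives the claimed lower bound $\e(1)^{2ab}k^{ab}n$. The main obstacle will be the exponent arithmetic: each forward edge contributes $kn^{1/b}$, each backward edge $k^{b/(b-1)}$, and each Part-(4) path $k^{(t-1)b/(b-1)}$; the parities in $r$ and $s$ are engineered precisely so that the combined $k$-exponent across $P_1,Z_1,\ldots,Z_a$ and the $(a-1)$ paths is at least $ab$ and the combined $n$-exponent is at least $1$. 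When $b-t$ is odd, any resulting deficit in one exponent is absorbed by a surplus in the other using the regime constraint $kn^{1+1/b}\le\binom{n}{2}$, which forces $k\le n^{1-1/b}$.
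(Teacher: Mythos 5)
Your outline for Steps 1--3 of the algorithm matches the paper's \cref{claim:R1} (the count of $\cR_1$), and the final exponent arithmetic together with the division by $a!$ is also as in the paper. The gap is in your treatment of Step 4. You claim that property \ref{lemma:bal-paths-cont-edges} together with \cref{lemma:linkbound}, ``summed over link levels $j\ge 1$'', bounds the number of paths in $\cQ(z_j)$ meeting some element of $L_\cF(E)$ by a small fraction of $\card{\cQ(z_j)}$. But property \ref{lemma:bal-paths-cont-edges} only applies to $\si$ with $\card{\si}\le t-1$, and the level $j=t$ is genuinely problematic: a path $P\in\cQ(z_{i+1})$ has exactly $t$ edges, so $P$ contains some $\si\in L^{(t)}_\cF(E(\bR_i))$ precisely when $E(P)$ itself lies in $L^{(t)}_\cF(E(\bR_i))$, i.e.\ when $E(P)\cup\tau$ is saturated for some $\tau\subset E(\bR_i)$. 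The only bound your tools give on the number of such paths is $\card{L^{(t)}_\cF(E(\bR_i))}\le 2^{2ab}(\de k^{b/(b-1)})^t$, and comparing this with $\card{\cQ(z_{i+1})}=\e(t)^tk^{(t-1)b/(b-1)}$ the ratio is of order $(\de/\e(t))^t\, k^{b/(b-1)}$, which is unbounded in $k$ (recall $k$ may be as large as $n^{(b-1)/b}$). So the union bound does not show that at least half of the paths in $\cQ(z_j)$ survive.

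This is exactly the difficulty the paper's argument is built around: it defines $\cD_i$ as the set of prefixes $\bR_i$ for which at least $\frac14\e(t)^tk^{(t-1)b/(b-1)}$ paths of $\cQ(z_{i+1})$ have their full edge set in $L^{(t)}_\cF(E(\bR_i))$, proves your desired ``half the paths survive'' statement only for $\bR_i\notin\cD_i$ (\cref{claim:R2}, where the levels $1\le j\le t-1$ are handled exactly as you propose), and then separately shows $\card{\cD_i}\le\frac12\card{\cR_i}$ (\cref{claim:R3}). That last step is the technical heart of the section: it needs the upper bound $m(j)$ on the number of prefixes containing a given forest $J$ (\cref{step:1}), a pigeonhole extraction of many distinct saturated pairs $(J,E(P))$ (\cref{step:2}), and a contradiction with the maximum-degree assumption \eqref{eq:degreebound1} on edges incident to $x$ (\cref{step:3}). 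None of this is present in, or recoverable from, your proposal; without it the claim is not proved.
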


Before we proceed with the proof of Claim \ref{claim:R0}, we show how it implies \cref{prop:addtheta}.

\begin{proof}[Proof of \cref{prop:addtheta}]
Since $\card{B_1} \le kn^{1/b}$ by property \ref{P:size} of \cref{thm:refined}, we have  
\[ 
\card{\Theta \cap \cH} \le kn^{1/b}\cdot \max_{e\in E(G)}d_{\cH}(e) \overset{\cref{eq:degreebound1}}{\le} kn^{1/b} \cdot \e(1)^{2ab+1} k ^{ab-1} n^{1-1/b} = \e(1)^{2ab+1} k^{ab} n. 
\] 
Hence, by \cref{claim:R0}, there exists some $ H \in \Theta \setminus \cH $. By the construction of $ \Theta $, the collection $\cH \cup \{H\} $ is good. This finishes our proof.
\end{proof}

The rest of this section is devoted to the proof of \cref{claim:R0}. To make the counting of $\card{\Theta}$ easier to follow, we introduce some notation here. For $i \in [a]$, let $\cR_i$ be the set of all possible choices for the paths $P_1, Z_1,\ldots, Z_a, P_2, \ldots, P_i$ in \cref{alg}. For $\bR_i=(P_1, Z_1,\ldots, Z_a, P_2, \ldots, P_i) \in \cR_i$, let $\cR_{i+1}(\bR_i) := \left\{ P_{i+1} \in Q(z_{i+1}) : (\bR_i,P_{i+1}) \in \cR_{i+1} \right\}$.
We call a vertex $v\in V(\bR_1)\setminus \{x\}$ {\em forward} if either $v\in V(P_1)$ or $v\in B_t$, {\em backward} if $v\in V(Z_2\cup \ldots \cup Z_a)\cap B_{t-1}$.
Hence we have partitioned $V(\bR_1)\setminus\{x\}$ into forward and backward vertices. Let $r_{fw}$ and $r_{bw}$ denote the number of forward and backward vertices respectively of some $\bR_1 \in \cR_1$, and let $ r = r_{fw}+r_{bw}$. It is not difficult to see that $r = ab - (a-1)t$, and
\begin{equation}\label{eq:r}
\begin{cases}
r_{fw}=t +a(b-t)/2, \ r_{bw}=a(b-t)/2 & \text{if } b-t \text{ is even},\\
r_{fw}=t + a(b-t+1)/2 -1, \ r_{bw}=a(b-t-1)/2+1& \text{if } b-t \text{ is odd}.
\end{cases}
\end{equation}

To prove \cref{claim:R0}, we first bound the number of graphs $(P_1,Z_1,\ldots,Z_a)$, chosen in Steps 1--3, in terms of $r_{fw}$ and $r_{bw}$.

\begin{claim}\label{claim:R1}
$\card{\cR_1}\ge \frac12 \left( \e(t)kn^{1/b} \right)^{r_{fw}} \cdot \left( \e(t)^2k^{b/(b-1)} \right)^{r_{bw}}$.
\end{claim}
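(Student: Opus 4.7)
The plan is to count $\card{\cR_1}$ greedily, tracking the number of valid extensions at each vertex-selection step of Steps 1--3 of \cref{alg}. At every such step we extend the current partial path from a vertex $u$ by choosing some $v$ in a prescribed set $X_{\cdot}(u)$ whose cardinality is $\e(t) k n^{1/b}$ on a \emph{forward} step (one using some $X_i$ with $i \in [t-1]$) and $\e(t)^2 k^{b/(b-1)}$ on a \emph{backward} step (one using $X_t$). By construction of \cref{alg} and \eqref{eq:r} there are exactly $r_{fw}$ forward and $r_{bw}$ backward steps. A choice $v$ is invalid if either $v \in V$, of which there are at most $\card V \le r + 1 = O_{a,b}(1)$, or $\{u,v\} \in L_{\cF}^{(1)}(E)$, of which there are at most $\card{L_{\cF}^{(1)}(E)}$; by \cref{lemma:linkbound} applied with $j = 1$ and $S = E$ (using $\card E \le r$) this last quantity is bounded by $2^{ab+r+1}\de k^{b/(b-1)}$.

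Next I would use simplicity of $G$ to extract the key inequality $k^{b/(b-1)} \lesssim k n^{1/b}$: indeed $k n^{1+1/b} \le \binom{n}{2}$ forces $k \le n^{(b-1)/b}/2$, hence
\[
k^{b/(b-1)} = k \cdot k^{1/(b-1)} \le 2^{-1/(b-1)} \cdot k n^{1/b}.
\]
Consequently, on a forward step the link-invalid count is at most $C_{a,b,t} \, \de \, k n^{1/b}$, which is well below $\tfrac{1}{4}\e(t) k n^{1/b}$ because $\de = \e(1)^{2ab+2}$ and the cascading $\e(t-1) = \e(t)^t$ force $\de \ll \e(t)$. On a backward step the same argument bounds the link-invalid count by $\tfrac{1}{4}\e(t)^2 k^{b/(b-1)}$. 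The vertex-reuse term $\card V \le r+1$ is negligible compared to either neighborhood size because $k \ge k_0 = 1/\de$ is large. Hence every step admits at least $(1-o(1))\card{X_{\cdot}(u)}$ valid choices, and multiplying over all $r = r_{fw} + r_{bw} \le ab$ steps, absorbing the cumulative $(1-o(1))$ factors into an overall constant $\tfrac{1}{2}$, yields the claimed inequality.

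The main obstacle I anticipate is precisely the inequality $k^{b/(b-1)} \lesssim k n^{1/b}$ coming from simplicity of $G$: without it, the link-invalid count on a forward step (scaling as $k^{b/(b-1)}$) could dwarf the forward neighborhood size (scaling as $k n^{1/b}$) and the estimate would collapse. A secondary bookkeeping care is verifying that the tower of $\e$'s together with $k \ge 1/\de$ is small enough to absorb both the multiplicative $(1-o(1))$ losses over up to $ab$ steps and the additive $\card V$ term into the clean final factor $\tfrac{1}{2}$.
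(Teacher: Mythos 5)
Your proof is correct and follows essentially the same route as the paper: bound the number of invalid choices per vertex by the $\card{V}$-term plus $\card{L^{(1)}_{\cF}(E)}$ via \cref{lemma:linkbound}, use the simplicity bound $k \le n^{(b-1)/b}$ to compare $k^{b/(b-1)}$ with $kn^{1/b}$ on forward steps, invoke $\de \ll \e(t)^2$ so the loss per step is a $(1-o(1))$ factor, and multiply across $r_{fw}$ forward and $r_{bw}$ backward steps, absorbing all losses into the leading $\tfrac{1}{2}$.
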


\begin{proof}
We first show that at most $ ab + 2^{2ab}\de k^{b/(b-1)} $ choices are excluded for each vertex.  
Recall that $H=(V,E)$ is the graph induced by the currently selected edges. Note that at most $ab$ choices are excluded by the condition that the new vertex is not in $V$.  
Moreover, by \cref{lemma:linkbound} we have
\[
|L^{(1)}_{\cF}(E)| \le 2^{ab+\card{E}+1}\cdot \de k^{b/(b-1)} \le 2^{2ab} \de k^{b/(b-1)},
\]
as required.
Therefore, there are at least 
\[
\e(t)kn^{1/b} - \left(ab + 2^{2ab}\de k^{b/(b-1)} \right) \ge 2^{-1/r} \e(t)kn^{1/b}
\] 
choices for each forward vertex, where the last inequality holds since $k \le n^{(b-1)/b}$ and $ \de \ll \e(t)^2 $. Similarly, using the fact that $ \de \ll \e(t)^2 $, we find that there are at most
\[ 
\e(t)^2k^{b/(b-1)} - \left(ab + 2^{2ab}\de k^{b/(b-1)} \right) \ge 2^{-1/r} \e(t)^2 k^{b/(b-1)}
\] 
choices for each backward vertex. The claim now follows, as we choose $r_{fw}$ forward vertices and $r_{bw}$ backward vertices.
\end{proof}

For each $i \in [a-1]$, define $\cD_i$ to be the set of all $\bR_i \in \cR_i$ for which there are at least 
\[
\frac14 \e(t)^{t} k^{(t-1)b/(b-1)}
\]
paths $P \in \cQ(z_{i+1})$ with $E(P) \in L_\cF^{(t)}(E(\bR_i))$. (Here we view $\bR_i$ as a graph.)
We now deduce \cref{claim:R0} from the following two claims. The first shows that if the graph $\bR_i \in \cR_i$ satisfies $\bR_i \notin \cD_i$, then we have many choices for the path $P_{i+1}$ in Step 4.

\begin{claim}\label{claim:R2}
If $\bR_i \in \cR_i\setminus \cD_i$ for some $i\in [a-1]$, then 
\[
\card{\cR_{i+1}(\bR_i)} \ge \frac12 \cdot \e(t)^{t} k^{(t-1)b/(b-1)}.
\]
\end{claim}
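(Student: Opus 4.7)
Since $|\cQ(z_{i+1})|=\e(t)^t k^{(t-1)b/(b-1)}$, the plan is to show that at most half of these paths are excluded by the conditions of Step~4. Call a path $P\in \cQ(z_{i+1})$ \emph{bad} if either (a)~$P$ meets $V(\bR_i)\setminus \{x,z_{i+1}\}$, or (b)~there exist $j\in [t]$ and $\si \in L_\cF^{(j)}(E(\bR_i))$ with $\si \subset E(P)$; I would bound each type separately and add the contributions.

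For type~(a), since a straightforward count gives $|V(\bR_i)|\le ab$, applying property~\ref{lemma:bal-paths-cont-vertex} to each such vertex bounds the contribution by $ab\cdot b\cdot k^{(t-2)b/(b-1)}$, which is negligible compared to $\e(t)^t k^{(t-1)b/(b-1)}$ for $k\ge k_0$ large.

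For type~(b) I would split on $j$. The case $j=t$ is directly covered by the hypothesis $\bR_i\notin \cD_i$, yielding at most $\tfrac14 \e(t)^t k^{(t-1)b/(b-1)}$ bad paths. For $1\le j\le t-1$, I would combine \cref{lemma:linkbound}, which (using $|E(\bR_i)|\le ab$) gives $|L_\cF^{(j)}(E(\bR_i))|\le 2^{2ab+1}(\de k^{b/(b-1)})^j$, with property~\ref{lemma:bal-paths-cont-edges}, which bounds by $t^t k^{(t-j-1)b/(b-1)}$ the number of paths in $\cQ[x\to z_{i+1}]$ containing any fixed $\si$ of size $j$. Their product telescopes to $2^{2ab+1}t^t \de^j k^{(t-1)b/(b-1)}$; summing over $j$ and exploiting $\de\ll \e(t)^t$ (a consequence of the tower $\e(s-1)=\e(s)^s$ and $\de=\e(1)^{2ab+2}$) bounds the total contribution by $\tfrac14 \e(t)^t k^{(t-1)b/(b-1)}$.

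The main obstacle is the boundary case $j=t$: property~\ref{lemma:bal-paths-cont-edges} is stated only for $|\si|\le t-1$, so at $j=t$ we would only have the trivial bound of $1$ path per saturated $t$-subset, much too weak to multiply against the $O\bigl((\de k^{b/(b-1)})^t\bigr)$ saturated $t$-subsets supplied by \cref{lemma:linkbound}. This is exactly why $\cD_i$ was introduced --- to quarantine in advance those configurations $\bR_i$ for which this top case could fail. Once these are excluded by hypothesis, the lower-order cases $j<t$ are controlled by the two-sided estimate above, and combining the three contributions yields $|\cR_{i+1}(\bR_i)|\ge \tfrac12 \e(t)^t k^{(t-1)b/(b-1)}$ as required.
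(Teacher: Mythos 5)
Your proposal is correct and follows essentially the same route as the paper's proof: you bound the complement $\cQ(z_{i+1})\setminus\cR_{i+1}(\bR_i)$ by splitting into paths meeting $V(\bR_i)\setminus\{x,z_{i+1}\}$ (controlled via property~\ref{lemma:bal-paths-cont-vertex}), paths containing a $j$-edge member of $L_\cF(E(\bR_i))$ with $j\le t-1$ (controlled via \cref{lemma:linkbound} together with property~\ref{lemma:bal-paths-cont-edges}), and paths containing a $t$-edge member of $L_\cF(E(\bR_i))$ (controlled by the hypothesis $\bR_i\notin\cD_i$), which is exactly the paper's decomposition and uses the same tools with the same constants. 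Your remark on why property~\ref{lemma:bal-paths-cont-edges} cannot cover the case $j=t$ — forcing the quarantine set $\cD_i$ — is a correct and useful gloss, but not a deviation from the paper's argument.
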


The second states that $\card{\cD_i}$ is not too large.

\begin{claim}\label{claim:R3}
$\card{\cD_i} \le \tfrac12\card{\cR_i}$ for every $i\in [a-1]$. 
\end{claim}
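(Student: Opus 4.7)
The plan is to prove \cref{claim:R3} by contradiction via a double-counting argument. Assuming $|\cD_i| > \tfrac12|\cR_i|$, the definition of $\cD_i$ immediately gives the lower bound
\[
\sum_{\bR_i \in \cR_i} f(\bR_i) > \tfrac18 |\cR_i|\,\e(t)^t k^{(t-1)b/(b-1)},
\]
where $f(\bR_i)$ denotes the number of bad $P \in \cQ(z_{i+1})$. For the matching upper bound I would reorganise the sum over bad triples $(\bR_i, P, \tau)$, where $\tau \subset E(\bR_i)$ is non-empty with $\tau \cup E(P) \in \cF$, by fixing $P$ first and applying \cref{lemma:linkbound} with $S = E(P)$, obtaining
\[
\sum_{\bR_i} f(\bR_i) \le \sum_{P \in \bigcup_z \cQ(z)}\sum_{s \ge 1} |L_\cF^{(s)}(E(P))|\cdot\max_{|\tau|=s}|\{\bR_i : z_{i+1}=z(P),\, \tau \subset E(\bR_i)\}|,
\]
with $|L_\cF^{(s)}(E(P))| \le 2^{ab+t+1}(\de k^{b/(b-1)})^s$.

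The crux of the proof is then the estimate, for any $z \in B_t$ and forest $\tau$ of size $s$,
\[
|\{\bR_i : z_{i+1}=z,\,\tau \subset E(\bR_i)\}| \;\le\; \frac{O_s(1)\cdot|\cR_i|}{|B_t|\cdot(\e(t)^2 k^{b/(b-1)})^s},
\]
which I would prove by analysing the construction of $\bR_i$ in \cref{alg}. Fixing $z_{i+1}=z$ saves (essentially) a factor of $|B_t|$, and each edge of $\tau$ freezes a further vertex- or path-choice of the algorithm, saving a factor of at least $\e(t)^2 k^{b/(b-1)}$ per edge: for edges lying in $P_1$ or one of the $Z_j$, by \cref{P:fwneighb,P:bwneighb}; for edges in a path $P_j$ with $j \ge 2$, by \cref{lemma:bal-paths-cont-edges}. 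Combined with $\sum_z|\cQ(z)|=|B_t|\e(t)^tk^{(t-1)b/(b-1)}$ and the geometric series $\sum_{s\ge 1}O_s(1)(\de/\e(t)^2)^s = O(\de/\e(t)^2)$ (valid since $\de\ll\e(t)$), this yields
\[
\sum_{\bR_i}f(\bR_i) \le O(\de/\e(t)^2)\cdot|\cR_i|\,\e(t)^tk^{(t-1)b/(b-1)},
\]
which contradicts the lower bound once $\de$ is chosen small enough.

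The main obstacle will be establishing the displayed bound on $|\{\bR_i : z_{i+1}=z,\,\tau\subset E(\bR_i)\}|$, in particular obtaining the full $|B_t|$ savings from fixing $z_{i+1}$: a naive per-vertex analysis in \cref{alg} only yields the weaker factor $\e(t)kn^{1/b}$, which by a direct computation is insufficient to beat the $k^{b/(b-1)}$-loss coming from the link bound. Recovering the correct $|B_t|$ factor will require an averaging argument showing that $z_{i+1}$ is well spread over $B_t$ across $\cR_i$, presumably via bounds on the multiplicity of walks in the zig-zag paths $Z_{i+1}$ analogous to \cref{lemma:bal-paths-cont-vertex,lemma:bal-paths-cont-edges}. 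Careful bookkeeping will also be needed to handle overlaps when edges of $\tau$ share vertices among themselves or with the constraint $z_{i+1}=z$, so that the $O_s(1)$ combinatorial constant truly absorbs all placement considerations.
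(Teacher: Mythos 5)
Your reorganisation of the double-count (fixing $P$ and $z=z_{i+1}$ first, then applying \cref{lemma:linkbound} with $S=E(P)$) is a natural idea, and you correctly diagnose where it gets stuck: a naive count of $\{\bR_i : z_{i+1}=z,\ \tau\subset E(\bR_i)\}$ only saves a factor of $\e(t)kn^{1/b}$ for the fixed forward vertex $z_{i+1}$, whereas you would need the full $|B_t|$, and by \cref{P:size} together with $k\le n^{(b-1)/b}$ one typically has $|B_t|\gg kn^{1/b}$. The problem is that the "averaging argument" you hope for is not available: \cref{thm:refined} supplies only \emph{lower} bounds on $\card{\cQ[x\to v]}$ and on the neighbourhoods $N(u)\cap B_{i+1}$; there is no upper bound controlling how often a given $z\in B_t$ can occur as the endpoint of $Z_{i+1}$ (or, in the degenerate case $t=b$, as the endpoint of $P_1$). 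Properties \cref{lemma:bal-paths-cont-vertex,lemma:bal-paths-cont-edges} speak only to paths inside a fixed $\cQ[x\to w]$ and give no equidistribution of zig-zag endpoints, so the claimed bound $\lesssim |\cR_i|/\bigl(|B_t|(\e(t)^2k^{b/(b-1)})^s\bigr)$ does not follow from anything at hand and I do not believe it is provable with the given machinery.

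The deeper issue is that a bare double-count of the pairs $(\bR_i,P)$, as you propose, discards the crucial \emph{amplification} that the paper exploits. In the paper's argument the saturated forest $J\cup E(P)\in\cF$ (your $\tau\cup E(P)$) is not merely counted once: by definition of $\cF$, it is contained in $d_\cH(J\cup E(P))\ge\bigl\lfloor\De^{(j+t)}(\de,k,n)\bigr\rfloor\approx\De^{(1)}/(\de k^{b/(b-1)})^{j+t-1}$ copies of $\theta_{a,b}$ in $\cH$, all of which pass through $x$ and hence through one of the $\le kn^{1/b}$ edges between $x$ and $B_1$. This factor of $\De^{(1)}(\de,k,n)=k^{ab-1}n^{1-1/b}$ is exactly what compensates for having only a $kn^{1/b}$ saving (rather than $|B_t|$) in the bound $m(j)$ of \cref{step:1}: one then pigeonholes onto a single edge $e$ between $x$ and $B_1$ and invokes the per-edge cap \eqref{eq:degreebound2} to force $\card{\cD_i}\le\tfrac12\card{\cR_i}$. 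In short, the paper's \cref{step:2,step:3} route the count through $d_\cH$ and the maximum edge-degree in $\cH$, which your direct $(\bR_i,P)$-count cannot reproduce; as written, your proposal has a genuine gap at the step where it requires $z_{i+1}$ to be well spread over $B_t$.
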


\begin{proof}[Proof of \cref{claim:R0}]
From Claims \ref{claim:R2} and \ref{claim:R3}, we find
\[
\card{\cR_{i+1}} \ge \tfrac12 \cdot \e(t)^{t} k^{(t-1)b/(b-1)}\cdot \card{\cR_i\setminus \cD_i} \ge \tfrac14 \cdot \e(t)^{t} k^{(t-1)b/(b-1)}\cdot \card{\cR_i}
\]
for every $i\in [a-1]$. Combined with \cref{claim:R1}, we obtain
\begin{align*}
\card{\cR_a} &\ge \left(\tfrac14\e(t)^{t}k^{(t-1)b/(b-1)}\right)^{a-1}\cdot\tfrac12 ( \e(t)kn^{1/b})^{r_{fw}} \cdot (\e(t)^2k^{b/(b-1)})^{r_{bw}}\\
                   &=2^{-2a+1}\e(t)^{(a-1)t+r_{fw}+2r_{bw}}\cdot k^{(a-1)(t-1)b/(b-1)+r_{fw}+br_{bw}/(b-1)}n^{r_{fw}/b},
\end{align*}
By \eqref{eq:r}, one has
\begin{align*}
	(a-1)t+r_{fw}+2r_{bw} & \le (a-1)t+\left\{t+a(b-t)/2\right\}+2a(b-t)/2\\
	&=2ab-at-a(b-t)/2\le 2ab-at.
\end{align*}
Again from \eqref{eq:r} we find 
$r_{fw} \ge t+a(b-t)/2=b+(a-2)(b-t)/2 \ge b$. Together with the fact that $n\ge k^{b/(b-1)}$, this yields
\begin{align*}
k^{(a-1)(t-1)b/(b-1)+r_{fw}+br_{bw}/(b-1)}n^{r_{fw}/b} &\ge k^{(a-1)(t-1)b/(b-1)+r_{fw}+br_{bw}/(b-1)+(r_{fw}/b-1)\cdot b/(b-1)}n\\
&=k^{\left\{(a-1)(t-1)+r_{fw}+r_{bw}-1\right\}b/(b-1)}n=k^{ab}n,
\end{align*}
where the last equality follows from the formula $r_{fw}+r_{bw}=ab-(a-1)t$. Therefore, we get
\[
\card{\cR_a} \ge 2^{-2a+1}\e(t)^{2ab-at}k^{ab}n.
\]
As each copy of $\theta_{a,b}$ appears at most $a!$ times in $\cR_a$, we conclude 
\[
\card{\Theta} \ge  \frac{1}{a!}\card{\cR_a} \ge  \frac{1}{a!}2^{-2a+1}\e(t)^{2ab-at}k^{ab}n \ge \e(1)^{2ab}k^{ab}n
\]
for $\e(1)<\e(t) \ll 1$, 
as required.
\end{proof}

We end this section with the proofs of \cref{claim:R2,claim:R3}.

\begin{proof}[Proof of \cref{claim:R2}]
As $\card{\cQ(z_{i+1})}=\e(t)^tk^{(t-1)b/(b-1)}$, in order to prove the claim, it suffices to show $\card{\cQ(z_{i+1})\setminus \cR_{i+1}(\bR_i)} \le \frac12 \e(t)^tk^{(t-1)b/(b-1)}$. In other words, we wish to bound the number of paths in $\cQ(z_{i+1})$ which either contain a vertex of $V(\bR_i)\setminus \{x,z_{i+1}\}$, or fail to avoid $L_{\cF}(E(\bR_i))$.

By property \cref{lemma:bal-paths-cont-vertex} of \cref{thm:refined}, the number of paths in $ \cQ(z_{i+1})$ which contain a vertex of $V(\bR_i)\setminus \{x,z_{i+1}\}$ is at most
\begin{equation*}
ab \cdot b k^{(t-2)b/(b-1)} \le \e(t)^{t+1} k^{(t-1)b/(b-1)},
\end{equation*}
as $ k \ge k_0 \gg \e(t)^{-(t+1)} $. 

Now, let $\si \in L_\cF(E(\bR_i))$, and consider the paths in $\cQ(z_{i+1})$ that contain $\si$. We first deal with the case $1\le \card{\si} \le t-1$. According to \cref{lemma:bal-paths-cont-edges}, the number of paths in $\cQ(z_{i+1})$ containing $\si$ is at most 
\[
t^t k^{(t-\card{\si}-1)b/(b-1)}.
\]
Moreover, by \cref{lemma:linkbound}, we have 
\[
|L_\cF^{(|\si|)} (E(\bR_i))| \le 2^{2ab}( \de k^{b/(b-1)})^{\card{\si}}.
\]
Therefore, the number of paths in $\cQ(z_{i+1})$ which contain some $ \si \in L_\cF(E(\bR_i))$ with $1 \le \card \si \le t-1$ is at most
\begin{equation*}
(2b)^{2ab}\cdot \de k^{(t-1)b/(b-1)} \le \e(t)^{t+1}k^{(t-1)b/(b-1)},
\end{equation*}
as $\de \ll \e(t)^{t+1}$.

On the other hand, since $\bR_i \notin \cD_i$, there are at most \begin{equation*}%\label{eq:count3}
    \frac14 \e(t)^{t}k^{(t-1)b/(b-1)}
\end{equation*}
paths in $\cQ(z_{i+1})$ that contain some $\si \in L_{\cF}(E(\bR_i))$ with $\card{\si}=t$.

Summing these estimates gives the desired inequality
\[
\card{\cQ(z_{i+1})\setminus \cR_{i+1}(\bR_i)} \le \frac12 \cdot \e(t)^{t}k^{(t-1)b/(b-1)}. \qedhere
\]
\end{proof} 
 
\begin{proof}[Proof of \cref{claim:R3}.]
We proceed by induction on $i$. Let $i \in [a-1]$ and assume that the claim holds up to $i-1$ (no assumption is needed in case $i=1$). Thus, we have
\[
\card{\cR_j} \overset{\text{\cref{claim:R2}}}{\ge} \tfrac12 \cdot \e(t)^{t}k^{(t-1)b/(b-1)}\cdot\card{\cR_{j-1}\setminus\cD_{j-1}}\ge \tfrac14 \cdot \e(t)^{t}k^{(t-1)b/(b-1)}\cdot \card{\cR_{j-1}} 
\]
for every $2 \le j \le i$. Combined with \cref{claim:R1}, this gives
\begin{equation}\label{eq:claim4-R}
\card{\cR_i} \ge \left(\tfrac14 \e(t)^{t}k^{(t-1)b/(b-1)}\right)^{i-1}\cdot \tfrac12 \left( \e(t)kn^{1/b} \right)^{r_{fw}} \cdot \left( \e(t)^2k^{b/(b-1)} \right)^{r_{bw}}.
\end{equation}

We proceed in three steps. We first give an upper bound for the number of members of $\cD_i$ containing a given set of edges. This will be used in conjunction with \eqref{eq:claim4-R}. 

\begin{step}\label{step:1}
	Let $z\in B_t$, and let $J \subset E(G)$ be a forest of size $\card{J}= j \in [r+(i-1)(t-1)-1]$ which does not contain an $x$-$z$-path. Then there are at most
	\[m(j):=
	\begin{cases}
	(ab)^{2ab}\left(kn^{1/b}\right)^{r_{fw}-1} \left(k^{b/(b-1)}\right)^{r_{bw}+(i-1)(t-1)-j} & \text{if } 1 \le j \le r_{bw} +(i-1)(t-1)\\
	(ab)^{2ab}\left(kn^{1/b}\right)^{r+(i-1)(t-1)-j-1}  & \text{otherwise}
	\end{cases}
	\]
	$\bR_i \in \cR_i $ with $z_{i+1} = z$ and $J \subset E(\bR_i)$.
\end{step}

\noindent \textbf{Remark:} The key property about $m(j)$ is that it satisfies
\begin{equation}\label{eq:m(j)}
	m(j) \cdot ( \de k^{b/(b-1)} )^{j} \le \frac{\card{\cR_i}}{kn^{1/b}} \quad \text{for every} \enskip j \in [r+(i-1)(t-1)-1],
\end{equation}
due to \eqref{eq:claim4-R} and $ \de \ll \e(t)^{3ab}$. 

\begin{proof}[Proof of \cref{step:1}]
Note that we have at most $(ab)^{ab}$ choices for the positions of the edges of $J$ in $\bR_i$. Let's fix such a choice and count the corresponding $\bR_i$. More precisely, given a partition $J=J_1 \cup \ldots \cup J_i$, we shall bound from above the number of $\bR_i=(\bR_1,P_2, \ldots,P_i) \in \cR_i$ such that $J_1 \subset \bR_1$, $J_\ell \subset P_\ell$ for all $2 \le \ell \le i$, and $z_{i+1}=z$. 

We may assume $\card{J_\ell} \le t$ for every $2 \le \ell \le t$ (otherwise there is no such $\bR_i$). Let $I$ denote the set of all $\ell \in \{2,\ldots,i\}$ with $\card{J_{\ell}}=t$.
Note that $P_{\ell}=J_{\ell}$ for all $\ell \in I$, and so $\{z_{\ell}:\ell\in I \}$ is fixed. As $J$ contains neither an $x$-$z$-path nor a cycle, we may assume further that the subgraph induced by $J_1$ together with the fixed vertices $\{x,z\}\cup \{z_{\ell}:\ell\in I \}$ is a forest, in which these $\card{I}+2$ fixed vertices are in different components. It follows that at least $\card{J_1}+\card I + 2$ vertices of $\bR_1$ are fixed, and hence there are at most $r-\card{J_1}-\card{I}-1$ not-yet-chosen vertices in $\bR_1$ ($x$ is excluded). This shows
\[
r_1+r_2 \le r-\card{J_1}-\card{I}-1,
\]
where $r_1$ and $r_2$ denote the number of free forward vertices and free backward vertices respectively. In addition, as $z\in B_t$ is fixed, we must have
\[
r_1 \le r_{fw}-1.
\]

Note that we have at most $\e(t)kn^{1/b} \le kn^{1/b}$ choices for each forward vertex and at most $ \e (t)^2 k^{b/(b-1)} \le k^{b/(b-1)}$ choices for each backward vertex. Moreover, $P_{\ell}=J_{\ell}$ for all $\ell \in I$, and for each $\ell \in \{2,\ldots,i\}\setminus I$, there are at most $t^t k^{(t - \card{J_\ell} - 1) b/(b-1)}$ choices for $P_\ell$ by \cref{lemma:bal-paths-cont-edges}. Hence the number of $\bR_i=(\bR_1,P_2,\ldots,P_i)$ such that $(J_1,J_2,\ldots,J_i) \subset \bR_i$ is at most
\[
(kn^{1/b})^{r_1}\cdot (k^{b/(b-1)})^{r_2} \cdot\prod_{\ell \in \{2,\ldots,i\}\setminus I} t^{t} k^{(t-\card{J_{\ell}}-1)b/(b-1)} \le b^{ab}\cdot (kn^{1/b})^{r_1}\cdot (k^{b/(b-1)})^{r_3},
\]
where $r_3:=r_2+\sum_{\ell \in \{2,\ldots,i\}\setminus I}(t-\card{J_{\ell}}-1)$. 
To estimate the above expression, we note that
\begin{align*}
	r_1+r_3&=(r_1+r_2)+\sum_{\ell \in \{2,\ldots,i\}\setminus I}(t-\card{J_{\ell}}-1)\\
	&\le (r-\card{J_1}-\card{I}-1)+ \Big(|I|+\sum_{\ell \in \{2,\ldots,i\}}(t-\card{J_{\ell}}-1)\Big)\\
	&=r+(i-1)(t-1)-j-1,
\end{align*}
where the second line follows from the estimate $r_1+r_2 \le r-\card{J_1}-\card{I}-1$ and the definition of $I$, and the last equality holds since $|J_1|+\ldots+|J_{\ell}|=|J|=j$. 
Together with the inequalities $r_1 \le r_{fw}-1$ and $kn^{1/b} \ge k^{b/(b-1)}$, this implies that
the number of $\bR_i=(\bR_1,P_2,\ldots,P_i)$ with $(J_1,J_2,\ldots,J_i) \subset \bR_i$ is bounded from above by $b^{ab}\cdot (kn^{1/b})^{r_{fw}-1}\cdot (k^{b/(b-1)})^{r_{bw}+(i-1)(t-1)-j}$ in case $r_{fw}-1 \le r+(i-1)(t-1)-j-1$, and by $b^{ab}\cdot (kn^{1/b})^{r+(i-1)(t-1)-j-1}$ otherwise.

Putting everything together, we conclude that there are at most $m(j)$ choices for $\bR_i \in \cR_{i+1}$ with $z_{i+1}=z$ and $J \subset E(\bR_i)$.
\end{proof}

We shall use the inequality \eqref{eq:m(j)} in the proof of \cref{step:3} below.

\begin{step}\label{step:2}
	There exist $j \in [r+(i-1)(t-1)-1]$
	for which there are at least
	\[
	2^{-ab}\e(t)^{t}k^{(t-1)b/(b-1)} \cdot \frac{\card{\cD_i}}{ab\cdot m(j)}
	\]
	distinct pairs $(J,P)$ with the following properties:
	\begin{enumerate}[(a)]
		\item[\normalfont (a)] $P \in \cQ(z)$ for some $z \in B_t$,
		\item[\normalfont (b)] $J$ is a set of $j$ edges of $G$ disjoint from $E(P)$,
		\item[\normalfont (c)] $J \cup E(P) \in \cF$.
	\end{enumerate}
\end{step}

\begin{proof}
Recall that for each $\bR_i \in \cD_i$, there are at least $\frac14 \e(t)^{t}k^{(t-1)b/(b-1)}$ paths $P$ in $\cQ(z_{i+1})$ with $E(P)\in L^{(t)}_{\cF}(E(\bR_i))$. By the pigeonhole principle, it follows that for each $\bR_i \in \cD_i$, there exists a set $\emptyset \ne f(\bR_i) \subset E(\bR_i)$ such that there are at least 
\begin{equation}\label{eq:badpaths}
2^{-ab}\e(t)^{t}k^{(t-1)b/(b-1)}
\end{equation}
paths $P \in \cQ(z_{i+1})$, each of which is disjoint from $f(\bR_i)$ and with $f(\bR_i)\cup E(P) \in \cF$.
Note that $f(\bR_i)$ is a forest and does not contain an $x$-$z_{i+1}$-path (otherwise for every path $P \in \cQ(z_{i+1})$, $f(\bR_i) \cup E(P)$ contains a cycle and thus $f(\bR_i) \cup E(P) \not \in \cF$). In particular, it follows that $ \card{f(\bR_i)} \in [r+(i-1)(t-1)-1]$. By another application of the pigeonhole principle, there exists some $j \in   [r+(i-1)(t-1)-1]$ such that $\card{f(\bR_i)} = j$ for at least $\card{\cD_i}/ab$ choices of $\bR_i \in \cD_i$.

Now, define $\cJ$ to be the set of all pairs $(J,z)$ with $z \in B_t$, $\card J = j$ and $J = f(\bR_i)$ for some
$\bR_i \in \cD_i$ with $z_{i+1} = z$. We claim that $\card{\cJ} \ge \frac{\card{\cD_i}}{ab\cdot m(j)}$.
Indeed, there is such a pair $(f(\bR_i),z_{i+1})$ for each $\bR_i \in \cD_i$ with $ \card{f(\bR_i)} = j$, and we may have counted each pair $m(j)$ times, by the above discussion and \cref{step:1}.

Finally, for each $(J,z) \in \cJ$ choose some $\bR_i \in \cD_i$ with $f(\bR_i) = J$ and $z_{i+1} = z$. Recall that there are at least \cref{eq:badpaths} paths $P \in \cQ(z_{i+1})$ with 
$J\cup E(P) \in \cF$, each of which is disjoint from $J$. 
Since $P$ determines $z$, all such generated pairs $(J,P)$ are distinct, and hence the claim follows.
\end{proof}

We are now ready to show that $\card{\cD_i}$ is not too large.
\begin{step}\label{step:3}
$\card{\cD_i} \le \frac12 \card{\cR_i}$.
\end{step}
\begin{proof}
Let $N$ be the number of copies of $\theta_{a,b}$ in $G$ which contain an edge between $x$ and $B_1$. For each pair $(J,P)$ as in \cref{step:2}, we have $\card{J \cup E(P)} = j + t$ and $J\cup E(P) \in \cF$, giving
\[
d_\cH(J\cup E(P)) \ge \lfloor \De^{(j + t)}(\de,k,n) \rfloor \ge \frac12 \cdot\frac{\De^{(1)}(\de,k,n)}{\left( \de k^{b/(b-1)} \right)^{j+t-1}}.
\]
Thus, noting that each member of $\cH$ contains $J\cup E(P)$ for at most $2^{2ab}$ pairs $(J,P)$, it follows from \cref{step:2} that
\begin{align*}
	N&\ge  2^{-ab}\e(t)^{t}k^{(t-1)b/(b-1)} \cdot \frac{\card{\cD_i}}{ab\cdot m(j)} \cdot
	\frac{\De^{(1)}(\de,k,n)}{2^{2ab+1}\left( \de k^{b/(b-1)} \right)^{j+t-1}} \\
	&\ge \frac{\e(t)^{t}}{2^{4ab}\de^{t-1}} \cdot \frac{\card{\cD_i} \cdot \De^{(1)}(\de,k,n)}{m(j) \cdot \left( \de k^{b/(b-1)} \right)^{j}}\\
	&\ge 2kn^{1/b} \cdot \frac{\card{\cD_i} \cdot \De^{(1)}(\de,k,n)}{\card{\cR_i}},
\end{align*}
where the last inequality follows from \eqref{eq:m(j)}, and since $t\ge 2$ and $\de \ll \e(t)^{t}$.
Now, as $\card{B_1}\le kn^{1/b}$, there exists an edge $e \in E(G)$ with
\[
d_\cH(e) \ge 2 \cdot \frac{\card{\cD_i}\cdot \De^{(1)}(\de,k,n)}{\card{\cR_i}}.
\] 
Combined with \eqref{eq:degreebound2}, we get the desired inequality.
\end{proof}
This finishes the proof of \cref{claim:R3}.
\end{proof}

\section{Complete degenerate hypergraphs}
In this section we will prove \cref{thm:supersat-compl}.
We shall record the vertex partition of each copy of $K^{(r)}_{a_1,\ldots,a_r}$. Therefore, a collection $\cH$ of copies of $K^{(r)}_{a_1,\ldots,a_r}$ in an $r$-graph $G$ will be understood as a collection of ordered $r$-tuples $(A_1,\ldots,A_r)$ with $A_i \in \binom{V(G)}{a_i}$ for every $i\in [r]$, and with $G[A_1,\ldots,A_r] = K^{(r)}_{a_1,\ldots,a_r}$.

Given a tuple $(S_1,\ldots,S_r)$ of vertex sets such that $1\le \card{S_i} \le a_i$ for every $i\in [r]$ and $G[S_1,\ldots,S_r]$ is a complete $r$-partite $r$-graph, we define $d_{\cH}(S_1,\ldots,S_r)$ to be the number of members of $\cH$ containing $(S_1,\ldots,S_r)$, that is, 
\[
d_{\cH}(S_1,\ldots,S_r)=\card{\{(A_1,\ldots,A_r)\in \cH:(S_1,\ldots,S_r)\subset (A_1,\ldots,A_r)\}}.
\]
Let $\de>0$. For each $(b_1,\ldots,b_r)\in \N^r$ with $1\le b_i \le a_i$ for all $i\in [r]$, define
\begin{equation}\label{eq:D}
D^{(b_1,\ldots,b_r)}(\de,k,n)=\prod_{i=1}^{r}\left(\de k^{a_1\cdots a_{i-1}}n^{1-1/a_i\cdots a_{r-1}}\right)^{a_i-b_i},
\end{equation}
where $a_1\cdots a_{i-1}:=1$ if $i=1$ and $a_i\cdots a_{r-1}:=1$ if $i=r$. In particular, we have
\begin{equation} \label{eq:D(1,...,1)}
D^{(1,\ldots,1)}(\de,k,n)=\de^{a_1+\ldots+a_r-r}k^{a_1\cdots a_r-1}n^{a_1+\ldots + a_{r-1}-r+1/a_1\cdots a_{r-1}}.
\end{equation}

One can show that (see  \cref{sec:appendix-B}) \cref{thm:supersat-compl} is a consequence of the following result.
  
\begin{thm}\label{thm:supersat-r-partite}
	For every $2\le a_1\le \ldots \le a_r$, there exist constants $\de>0$ and $k_0\in\N$ such that the following holds for every $k\ge k_0$ and every $n\in \N$. Given an $r$-graph $G$ with $n$ vertices and $kn^{r-1/a_1\cdots a_{r-1}}$ edges, there exists a collection $\cH$ of copies of $K^{(r)}_{a_1,\ldots,a_r}$ in $G$, satisfying:
 \begin{enumerate}[\normalfont (a)]
		\item $\card{\cH}\ge \de^{a_1+\ldots+a_r} k^{a_1\cdots a_r}n^{a_1+\ldots + a_{r-1}}$, and
		\item $d_{\cH}(S_1,\ldots,S_r) \le D^{(\card{S_1},\ldots,\card{S_r})}(\de,k,n)$ for every $S_1,\ldots,S_r \subset V(G)$.
	\end{enumerate}	
\end{thm}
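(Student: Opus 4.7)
The plan is to mirror the strategy of Section~2. Call a collection $\cH$ of copies of $K^{(r)}_{a_1,\ldots,a_r}$ in $G$ \emph{good} if $d_\cH(S_1,\ldots,S_r)\le D^{(|S_1|,\ldots,|S_r|)}(\de,k,n)$ for every tuple $(S_1,\ldots,S_r)$ with $1\le |S_i|\le a_i$ and $G[S_1,\ldots,S_r]$ complete $r$-partite. Then \cref{thm:supersat-r-partite} reduces to the following analogue of \cref{prop:addtheta}: if $\cH$ is good with $|\cH|\le \de k^{a_1\cdots a_r}n^{a_1+\ldots+a_{r-1}}$, then there is a copy $H\notin \cH$ for which $\cH\cup\{H\}$ remains good. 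Iterating from $\cH=\emptyset$ delivers the desired family.

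To prove the augmenting step, fix constants $\de\ll \e\ll 1$ depending only on $a_1,\ldots,a_r$, and first preprocess $G$ by deleting every edge $e$ with $d_\cH(e)\ge \e\cdot D^{(1,\ldots,1)}(\de,k,n)$. A handshaking computation, using the bound on $|\cH|$, shows that only an $O(\de/\e)$-fraction of the edges is removed, so at the cost of slightly weaker constants we may assume no such edge remains. Declare $(S_1,\ldots,S_r)$ to be \emph{saturated} when $d_\cH(S_1,\ldots,S_r)\ge \lfloor D^{(|S_1|,\ldots,|S_r|)}(\de,k,n)\rfloor$, and let $\cF$ be the family of saturated tuples. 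A double-counting argument exactly analogous to \cref{lemma:linkbound} then yields a \emph{link lemma}: for every fixed $(T_1,\ldots,T_r)$ and every admissible vector $(s_1,\ldots,s_r)$, the number of $(S_1,\ldots,S_r)$ with $|S_i|=s_i$, $S_i\cap T_i=\emptyset$ for all $i$, and $(S_1\cup T_1,\ldots,S_r\cup T_r)\in \cF$ is at most
\[
2^{O(a_1\cdots a_r)}\cdot \prod_{i=1}^{r}\bigl(\de k^{a_1\cdots a_{i-1}} n^{1-1/(a_i\cdots a_{r-1})}\bigr)^{s_i}.
\]
The crucial feature is that the $i$-th factor is a $\de$-fraction of the typical codegree at step $i$ of the construction below, so the forbidden set is always absorbable.

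Candidate copies are constructed greedily, part by part. Having chosen sets $A_1,\ldots,A_{i-1}$, select the $a_i$ vertices of $A_i$ one at a time from the common link of the already-fixed vertices. A standard iterated Kov\'ari--S\'os--Tur\'an convexity argument shows that, averaged over the earlier choices, this common link has size at least $c\,k^{a_1\cdots a_{i-1}}n^{1-1/(a_i\cdots a_{r-1})}$ for some $c=c(a_1,\ldots,a_r)>0$. Two kinds of vertices are forbidden at each step: those already appearing in the partial copy (at most $a_1+\ldots+a_r$ many), and those whose inclusion would complete, together with some subset of the chosen vertices, a tuple in $\cF$. By the link lemma the latter count is $O(\de)$ times the available codegree, so a positive proportion of the common link remains usable at every step. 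Multiplying the per-step lower bounds and accounting for the $(a_1!\cdots a_r!)$-fold overcount of each copy produces at least $2\de k^{a_1\cdots a_r}n^{a_1+\ldots+a_{r-1}}$ distinct candidate copies.

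Finally, the preprocessing bound $d_\cH(e)<\e D^{(1,\ldots,1)}(\de,k,n)$ combined with the fact that every candidate uses at least one edge shows that the number of candidates lying in $\cH$ is bounded by a small constant multiple of $\e\cdot \de k^{a_1\cdots a_r}n^{a_1+\ldots+a_{r-1}}$; taking $\e$ small enough guarantees a new candidate, and by construction adding it preserves goodness. The main obstacle, as in Section~2, is calibrating the link--versus--codegree balance: the exponents appearing in $D^{(b_1,\ldots,b_r)}$ have been chosen so that, at step $i$ of the greedy construction, the forbidden-vertex count from saturated extensions is dominated by the iterated codegree bound, and the $\de$-factor leaves the slack needed for the greedy choice. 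Once this balance is verified, the counting step parallels \cref{claim:R0} very closely, and no new structural idea beyond those developed in Section~2 for theta graphs is required.
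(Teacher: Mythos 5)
Your proposal follows the paper's own proof almost step for step: reduce \cref{thm:supersat-r-partite} to an augmenting proposition (the paper's \cref{prop:supersat-complete}), preprocess so that no single edge is saturated (the paper's \cref{eq:complete-good-edge}), prove a double-counting ``link lemma'' bounding forbidden extensions (the paper's \cref{claim:complete-saturated}), and run a greedy part-by-part construction with a Jensen/convexity step to produce $\geq 2\de k^{a_1\cdots a_r}n^{a_1+\cdots+a_{r-1}}$ good candidates (the paper's \cref{claim:good-tuples}, Subclaims 1 and 2). One remark: your final sentence attributes the existence of a new candidate to the preprocessing bound on $d_\cH(e)$, arguing as in the theta case of Section~2, but here there is no analogue of the ``all constructed copies pass through $B_1$'' bottleneck, so that edge-degree argument does not directly control the number of candidates already in $\cH$; fortunately it is also unnecessary, since the count $\geq 2\de k^{a_1\cdots a_r}n^{a_1+\cdots+a_{r-1}}$ already exceeds $\card{\cH}\le \de k^{a_1\cdots a_r}n^{a_1+\cdots+a_{r-1}}$, and this simple pigeonhole is exactly how the paper closes the argument.
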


Fix now $2\le a_1\le \ldots \le a_r$. We shall need various constants in the proof of \cref{prop:supersat-complete} below, which we will define here for convenience. Informally, they will satisfy
	\[
	k_0 \gg K \gg 1\gg \e(1) \gg \e(2) \gg \ldots \gg \e(r)\gg \e(r+1)= \de >0.
	\]
	More precisely, we can set $\e(1)=1/2$, $\e(i+1)=\e(i)^{a_i}/(2^{2a_i+a_1\cdots a_i}a_i!)$ for each $1\le i \le r$, $\de=\e(r+1)$, $K=a_1\cdots a_{r}2^{a_1+\ldots+a_r+1}$ and $k_0=1/\de$.

Let $G$ be an $n$-vertex $r$-graph with $kn^{r-1/a_1\cdots a_{r-1}}$ edges, where $k\ge k_0$. Let $(S_1,\ldots,S_r)$ be an ordered $r$-tuple of vertex sets that satisfies $1\le \card{S_i} \le a_i$ for every $i\in [r]$, and with $G[S_1,\ldots,S_r]=K^{(r)}_{\card{S_1},\ldots,\card{S_r}}$. We say that $(S_1,\ldots,S_r)$ is {\em saturated} if 
\[
d_{\cH}(S_1,\ldots,S_r) \ge \lfloor D^{(\card{S_1},\ldots,\card{S_r})}(\de,k,n)\rfloor,
\]
and that $(S_1,\ldots,S_r)$ is {\em good} if it contains no saturated $r$-tuple. We say that $\cH$ is {\em good} if every $(A_1,\ldots,A_r) \in \cH$ is good.

\begin{prop}\label{prop:supersat-complete}
  	Suppose that $\cH$ is a good collection of copies of $K^{(r)}_{a_1,\ldots,a_r}$ in $G$ of size ${\card{\cH}\le \de^{a_1+\ldots+a_r} k^{a_1\cdots a_r}n^{a_1+\ldots + a_{r-1}}}$. Then, there exists a copy $(A_1,\ldots,A_r) \notin \cH$ of $K^{(r)}_{a_1,\ldots,a_r}$ in $G$ such that $\cH\cup \{(A_1,\ldots,A_r)\}$ is good. 	
\end{prop}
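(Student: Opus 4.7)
The plan is to mimic the strategy of Section 2: construct a family $\Theta$ of good copies of $K^{(r)}_{a_1,\ldots,a_r}$ in $G$ by a greedy algorithm and verify that $\card{\Theta}>\card{\cH}$, which forces some copy in $\Theta\setminus\cH$ to witness the proposition. Structurally this should be somewhat cleaner than the theta case, since $K^{(r)}_{a_1,\ldots,a_r}$ has no path or back-edge structure---we simply need to choose $a_i$ vertices in each part forming all required cross-edges.

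First, I would establish a link bound analogous to \cref{lemma:linkbound}: for any partial complete $r$-partite tuple $(T_1,\ldots,T_r)$ with $G[T_1,\ldots,T_r]$ complete, the number of $r$-tuples $(U_1,\ldots,U_r)$ with $U_i\cap T_i=\emptyset$ for all $i$, $\sum_i\card{U_i}=j$, and $(T_1\cup U_1,\ldots,T_r\cup U_r)$ saturated is at most $C\prod_{i=1}^r\bigl(\delta k^{a_1\cdots a_{i-1}}n^{1-1/(a_i\cdots a_{r-1})}\bigr)^{\card{U_i}}$. The proof parallels \cref{lemma:linkbound}: summing $d_\cH$ over all such saturated extensions is bounded by a combinatorial constant times $d_\cH(T_1,\ldots,T_r)\le D^{(\card{T_1},\ldots,\card{T_r})}(\de,k,n)$, while each saturated extension contributes at least $\lfloor D^{(\text{extended type})}\rfloor$; dividing yields the stated bound, since adjacent values of $D^{(\cdot)}$ along any of the $r$ directions differ exactly by the corresponding factor in the product.

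Next, after preprocessing $G$ (discarding edges $e$ with unusually large $d_\cH(e)$ and vertices of low degree, which costs only a small fraction of $e(G)$ thanks to the hypothesis $\card{\cH}\le\de k^{a_1\cdots a_r}n^{a_1+\ldots+a_{r-1}}$), I would apply an iterated Kővári--Sós--Turán / convexity argument on the codegrees of $G$ to obtain a structured substructure: an anchor $x\in V(G)$ together with nested common-neighborhood sets of controlled sizes, analogous to \cref{thm:refined} but considerably simpler since only one ``forward'' direction is required. Within this substructure, the algorithm builds labelled copies $(A_1,\ldots,A_r)$ by selecting vertices one at a time in a prescribed order such as $v_{1,1},v_{2,1},\ldots,v_{r,1},v_{1,2},\ldots$; at each step the number of valid vertices (those satisfying the cross-edge constraints with previously chosen vertices) is at least the guaranteed common codegree, while the number of forbidden vertices (those that would make some sub-tuple saturated) is bounded from above by summing the new link bound over the constantly many extending partial sub-tuples. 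With $\delta\ll\e(r+1)\ll\cdots\ll\e(1)$ as declared in the setup of \cref{prop:supersat-complete}, the forbidden count is an $O(\delta)$-fraction of the valid count at every step, and a counting analogous to \cref{claim:R0} yields $\card{\Theta}\ge c\cdot k^{a_1\cdots a_r}n^{a_1+\ldots+a_{r-1}}$ for some $c\gg\de$, so that $\card{\Theta}>\card{\cH}$ and $\Theta\setminus\cH\ne\emptyset$.

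The main obstacle will be the bookkeeping across the many types $(b_1,\ldots,b_r)$ of sub-tuples that could saturate during the algorithm, and choosing the order of vertex selection so that at every single step the ratio of forbidden to valid choices remains small uniformly in the type. Balancing the constants $\e(1),\ldots,\e(r+1),\delta$ so that each step's count goes through is the technical heart of the proof, but closely mirrors the corresponding analysis in Section 2 and does not appear to require qualitatively new ideas.
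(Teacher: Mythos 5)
Your proposal correctly identifies the two crucial ingredients: a link-type bound for saturated extensions (the paper's \cref{claim:complete-saturated}), and the preprocessing step to discard saturated edges (\cref{eq:complete-good-edge}). Your intuition that the complete $r$-partite case is structurally simpler than the theta case is also right. However, the route you sketch diverges from the paper's in one substantive way: you propose to first extract a fixed anchor $x\in V(G)$ together with a refined nested-neighbourhood substructure analogous to \cref{thm:refined}, and then run an explicit greedy algorithm inside it. The paper never does this. There is no anchor vertex, no extraction of a $\cQ$-like family, and no \cref{alg}-style construction. Instead, \cref{claim:good-tuples} is a direct inductive count over types: for each $i$ it counts good tuples $(A_1,\ldots,A_{i-1},\{v_i\},\ldots,\{v_r\})$ subject only to a codegree lower bound on $(v_i,\ldots,v_r)$, and the step from $i$ to $i+1$ expands the singleton $\{v_i\}$ to a full $A_i$. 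Crucially, the paper does not need a \emph{uniform} codegree lower bound on any substructure: Subclaim~1 uses Jensen's inequality to show the count is large on \emph{average}, and the argument then restricts attention to those $(i-1)$-tuples $\bA$ whose neighbourhood $\cM(\bA)$ is at least half the average; Subclaim~2 applies \cref{claim:complete-saturated} only to those. This ``average plus typicality'' pattern replaces your proposed global extraction step entirely, which is exactly why no analogue of \cref{thm:refined} is needed. What your approach buys is a more uniform mental picture carrying over from Section~2, at the cost of having to actually prove a hypergraph refined-neighbourhood theorem (nontrivial in its own right, since the graph version cited as \cref{thm:refined} already summarises a whole section of Morris--Saxton); what the paper's approach buys is avoiding all of that. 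Your greedy step and link-bound reasoning are sound and match Subclaim~2 in spirit, so a careful execution of your plan would likely close the proof, but it would be doing redundant work. If you revise, drop the anchor/substructure extraction, prove the link bound in the form of \cref{claim:complete-saturated}, and run the part-by-part induction directly with a Jensen step at each stage.
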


\begin{proof}
	Let $\cF$ denote the collection of saturated sets, i.e.\
	\[
	\cF=\{(S_1,\ldots,S_r):\emptyset \ne S_1,\ldots,S_r \subset V(G) \ \text{and} \ d_{\cH}(S_1,\ldots,S_r)=\lfloor D^{(\card{S_1},\ldots,\card{S_r})}(\de,k,n)\rfloor\}.
	\]
	 A simple double-counting argument shows that there are at most
	\[
	\frac{a_1\cdots a_r\cdot \card{\cH}}{\lfloor D^{(1,\ldots,1)}(\de,k,n) \rfloor} \ll kn^{r-1/a_1\cdots a_{r-1}}=e(G)
	\]
	saturated edges of $G$. Here we use the assumption that $\card{\cH} \le \de^{a_1+\ldots+a_r} k^{a_1\cdots a_r}n^{a_1+\ldots+a_{r-1}}$ and \eqref{eq:D(1,...,1)}. Thus by choosing a non-empty subhypergraph of $G$ if necessary (and weakening the bound on $e(G)$ slightly), we may assume that 
	\begin{equation}\label{eq:complete-good-edge}
		(\{v_1\},\ldots,\{v_r\}) \notin \cF \ \text{for every} \ \{v_1,\ldots,v_r\}\in E(G).
	\end{equation}
	
	For $S_1,\ldots,S_r \subset V(G)$ and $i\in [r]$, define
	$X_i(S_1,\ldots,S_r)$ to be the set consisting of all vertices $v\in V(G)\setminus (S_1\cup \ldots \cup S_r)$ so that $(S'_1,\ldots,S'_{i-1},S'_i\cup\{v\},S'_{i+1},\ldots,S'_r)\in \cF$ for some non-empty $S'_1\subset S_1, \ldots, S'_r\subset S_r$.
	
	\begin{claim}\label{claim:complete-saturated} Provided that $\card{S_i} \le a_i$ for each $i\in [r]$, we have
		\[
		\card{X_i(S_1,\ldots,S_r)}\le K\de k^{a_1\cdots a_{i-1}}n^{1-1/a_i\cdots a_{r-1}}.
		\]
	\end{claim}
	\begin{proof}
		For each tuple $(S'_1,\ldots,S'_r)$ with $\emptyset \ne S'_i \subset S_i$ for every $i\in [r]$, set
		\[
		\cJ(S'_1,\ldots,S'_r)=\{v \in V(G)\setminus (S_1\cup \ldots \cup S_r):(S'_1,\ldots,S'_i\cup\{v\},\ldots,S'_r)\in \cF\}.
		\]
		By the handshaking lemma and the definition of goodness, we obtain
		\[
		\frac{1}{a_1\cdots a_r}\cdot\sum_{v \in \cJ(S'_1,\ldots,S'_r)}d_{\cH}(S'_1,\ldots,S'_i\cup\{v\},\ldots,S'_r) \le d_{\cH}(S'_1,\ldots,S'_r) \le D^{(\card{S'_1},\ldots,\card{S'_r})}(\de,k,n),
		\]
		as each edge of $\cH$ is counted at most $a_1\cdots a_r$ times in the sum.
		Moreover,
		\[
		\sum_{v \in \cJ(S'_1,\ldots,S'_r)}d_{\cH}(S'_1,\ldots,S'_i\cup\{v\},\ldots,S'_r) \ge \card{\cJ(S'_1,\ldots,S'_r)}\cdot \lfloor D^{(\card{S'_1},\ldots,\card{S'_i}+1,\ldots,\card{S'_r})}(\de,k,n) \rfloor,
		\]
		by the definition of $\cJ$ and $\cF$. Hence $\card{\cJ(S'_1,\ldots,S'_r)}$ is bounded from above by
		\[
		(a_1\cdots a_r)\cdot \frac{D^{(\card{S'_1},\ldots,\card{S'_r})}(\de,k,n)}{\lfloor D^{(\card{S'_1},\ldots,\card{S'_i}+1,\ldots,\card{S'_r})}(\de,k,n) \rfloor} \le (2a_1\cdots a_r)\cdot\de k^{a_1\cdots a_{i-1}}n^{1-1/a_i\cdots a_{r-1}}.
		\]
		Finally, since the sets $\cJ(S'_1,\ldots,S'_r)$ cover $X_i(S_1,\ldots,S_r)$, we find that
		\[
		\card{X_i(S_1,\ldots,S_r)}\le \sum\card{\cJ(S'_1,\ldots,S'_r)} \le a_1\cdots a_r2^{\card{S_1}+\ldots+\card{S_r}+1}\cdot \de k^{a_1\cdots a_{i-1}}n^{1-1/a_i\cdots a_{r-1}}, 
		\]
		as desired.
	\end{proof}		
	
	We now show that there are at least $\de k^{a_1\cdots a_r}n^{a_1+\cdots+a_{r-1}}$ good $r$-tuples $(A_1,\ldots,A_r)$ with $\card{A_1}=a_1,\ldots,\card{A_r}=a_r$. From this, the proposition follows immediately, since at least one of these is not in $\cH$.
	
	\begin{claim}\label{claim:good-tuples}
		There are at least $\e(r+1)k^{a_1\cdots a_r}n^{a_1+\cdots+a_{r-1}}$ good $r$-tuples $(A_1,\ldots,A_r)$ with $\card{A_1}=a_1,\ldots,\card{A_r}=a_r$.
	\end{claim}
	\begin{proof}
		Let $i\in [r+1]$, and let $v_i,v_{i+1},\ldots,v_r$ be $r+1-i$ vertices of $G$ such that 
		\[
		d_G(v_i,\ldots,v_r) \ge 2^{i-r-1}\cdot kn^{i-1-1/a_1\cdots a_{r-1}}.
		\]
		We prove by induction on $i$ that there are at least 
		\[
		\e(i)n^{a_1+\ldots+a_{i-1}-(i-1)a_1\cdots a_{i-1}}\cdot d_G(v_i,\ldots,v_r)^{a_1\cdots a_{i-1}}
		\]
		good $r$-tuples $(A_1,\ldots,A_{i-1},\{v_i\},\ldots,\{v_r\})$ with $\card{A_1}=a_1,\ldots,\card{A_{i-1}}=a_{i-1}$. Here we set $a_1+\ldots +a_{i-1}:=0$ when $i=1$, and $d_G(v_i,\ldots,v_r):=kn^{r-1/a_1\cdots a_{r-1}}$ if $i=r+1$. It is easy to see that \cref{claim:good-tuples} follows from the case $i=r+1$.
		
		The base case $i=1$ is an immediate consequence of \eqref{eq:complete-good-edge}. Suppose, then, that the result holds for some $i \in [r+1]$. Fix $v_{i+1},\ldots,v_r\in V(G)$ with
		\begin{equation}\label{eq:largedegree}
			d_G(v_{i+1},\ldots,v_r)\ge 2^{i-r}\cdot kn^{i-1/a_1\cdots a_{r-1}}.
		\end{equation}
		Let $\cM$ denote the collection consisting of all $i$-tuples $(A_1,\ldots,A_{i-1},\{v\})$ with $v\in V(G)$, $\card{A_1}=a_1,\ldots,\card{A_{i-1}}=a_{i-1}$, and such that $(A_1,\ldots,A_{i-1},\{v\},\{v_{i+1}\},\ldots,\{v_r\})$ is good.
		
		\vspace{.3cm}\noindent \textbf{Subclaim 1:} $\card{\cM}\ge 2^{-a_1\cdots a_{i-1}}\e(i)\cdot n^{1+a_1+\ldots+a_{i-1}-ia_1\cdots a_{i-1}}\cdot d_G(v_{i+1},\ldots,v_r)^{a_1\cdots a_{i-1}}$.
		\begin{proof}[Proof of Subclaim 1]
			Set $X=\{v\in V(G):d_G(v,v_{i+1},\ldots,v_r)\ge \frac{1}{2n}\cdot d_G(v_{i+1},\ldots,v_r)\}$. Then
			\begin{equation*}
				\sum_{v\in X} d_G(v,v_{i+1},\ldots,v_r) \ge \frac12 d_G(v_{i+1},\ldots,v_r).
			\end{equation*}
			Fix a vertex $v\in X$. It follows from the definition of $X$ and the assumption \eqref{eq:largedegree} that
			$d_G(v,v_{i+1},\ldots,v_r) \ge 2^{i-r-1}\cdot kn^{i-1-1/a_1\cdots a_{r-1}}$.
		   Hence, by the induction hypothesis, $\cM$ contains at least
			$\e(i)n^{a_1+\ldots+a_{i-1}-(i-1)a_1\cdots a_{i-1}}d_G(v,v_{i+1},\ldots,v_r)^{a_1\cdots a_{i-1}}$ $i$-tuples of the form $(A_1,\ldots,A_{i-1},\{v\})$. \\
			Summing over all $v\in X$, and using Jensen's inequality give
			\begin{align*}
				\card{\cM} &\ge \e(i)n^{a_1+\ldots+a_{i-1}-(i-1)a_1\cdots a_{i-1}}\sum_{v\in X}d_G(v,v_{i+1},\ldots,v_r)^{a_1\cdots a_{i-1}}\\
				& \ge \e(i)n^{a_1+\ldots+a_{i-1}-(i-1)a_1\cdots a_{i-1}}\cdot |X|^{1-a_1\cdots a_{i-1}} \Big(\sum_{v\in X}d_G(v_{i+1},\ldots,v_r)\Big)^{a_1\cdots a_{i-1}}\\
				& \ge 2^{-a_1\cdots a_{i-1}}\e(i)\cdot n^{1+a_1+\ldots+a_{i-1}-ia_1\cdots a_{i-1}}\cdot d_G(v_{i+1},\ldots,v_r)^{a_1\cdots a_{i-1}}. \qedhere
				\end{align*}
		\end{proof}
		
		We now use Subclaim 1 to bound the number of good tuples $(A_1,\ldots,A_i,\{v_{i+1}\},\ldots,\{v_r\})$ with $\card{A_1}=a_1,\ldots,\card{A_i}=a_i$. For each $(i-1)$-tuple $\bA=(A_1,\ldots,A_{i-1})$, define
		\[
		\cM(\bA):=\{v\in V(G):(\bA,\{v\})\in \cM\}.
		\]
		Consider $(i-1)$-tuples 
		$\bA$ for which
		\begin{equation}\label{eq:typical-M}
		\card{\cM(\bA)} \ge  \frac12n^{-(a_1+\ldots+a_{i-1})}\card{\cM}.	
		\end{equation}
		
		\noindent \textbf{Subclaim 2:} Suppose $\bA$ satisfies \eqref{eq:typical-M}. Then there are $\frac{1}{2^{a_i}a_i!}\card{\cM(\bA)}^{a_i}$ sets $A_i\in \binom{\cM(\bA)}{a_i}$ so that $(\bA,A_i,\{v_{i+1}\},\ldots,\{v_r\})$ is good.
		\begin{proof}[Proof of Subclaim 2]
		From \eqref{eq:typical-M} and Subclaim 1, we see that
		\begin{equation}\label{eq:typical-M-cor}
		\card{\cM(\bA)} \ge 2^{(i-r-1)a_1\cdots a_{i-1}-1}\e(i)\cdot k^{a_1\cdots a_{i-1}}n^{1-1/a_i\cdots a_{r-1}}.
		\end{equation}
			For $j=1,\ldots,a_i$, we can pick an arbitrary vertex
			\[
			u_j\in \cM(\bA)\setminus\left(\{u_1,\ldots,u_{i-1}\}\cup X_i(\bA,\{u_1,\ldots,u_{j-1}\},\{v_i\},\ldots,\{v_r\})\right),
			\]	
			and let $A_i=\{u_1,\ldots,u_{a_i}\}$. By choice of $u_j$, the tuple $(A,\{u_1,\ldots,u_j\},\{v_{i+1}\},\ldots,\{v_r\})$ is good for every $j \in [a_i]$, and hence the $r$-tuple $(\bA,A_i,\{v_{i+1}\},\ldots,\{v_r\})$ is good. From \cref{claim:complete-saturated}, we deduce that the number of choices for each $u_j$ is  at least
			\[
			\card{\cM(\bA)}-\left(a_i+K\de k^{a_1\cdots a_{i-1}}n^{1-1/a_i\cdots a_{r-1}}\right) \overset{\eqref{eq:typical-M-cor}}{\ge} \card{\cM(\bA)}/2.
			\]
			Thus the total number of choices for $A_i$ is at least $\frac{1}{2^{a_i}a_i!}\card{\cM(\bA)}^{a_i}$.
		\end{proof}
		
		Finally, observe that 
		\[
		\sum_{\bA \enskip \text{satisfies} \enskip \eqref{eq:typical-M}}\card{\cM(\bA)}\ge \card{\cM}/2
		\]
		 and that there are at most $n^{a_1+\ldots+a_{i-1}}$ choices for $\bA=(A_1,\ldots,A_{i-1})$. Hence, by Subclaim 2 and convexity, the number of good $r$-tuples $(\bA,A_i,\{v_{i+1}\},\ldots,\{v_r\})$ is at least
		\begin{align*}
			\frac{1}{2^{a_i}a_i!}\sum_{\bA \enskip \text{satisfies} \enskip \eqref{eq:typical-M}}\card{\cM(\bA)}^{a_i} &\ge \frac{1}{2^{a_i}a_i!} \cdot \left(n^{a_1+\ldots+a_{i-1}}\right)^{1-a_i}(\card \cM/2)^{a_i}\\
			&\overset{\textrm{Subclaim 1}}{\ge} \e(i+1)n^{a_1+\ldots+a_i-ia_1\cdots a_i}\cdot d_G(v_{i+1},\ldots,v_r)^{a_1\cdots a_i}, 	
		\end{align*}
		completing the proof of \cref{claim:good-tuples}.	
	\end{proof}	
	
	This finishes our proof of Proposition \ref{prop:supersat-complete}.
\end{proof}

%%%%%%%%%%%%%%%%%%%%%%%%%%%%%%%%%%%%%%%%%%%%%%%%%%%%%%%%%%%%%%%%%%%%
%%%%%%%%%%%%%%%%%%%%%%%%%%%%%%%%%%%%%%%%%%%%%%%%%%%%%%%%%%%%%%%%%%%%
\section{The Tur\'an problem in random hypergraphs}

Balanced supersaturation theorems can be used to obtain results for the corresponding random hypergraph Tur\'an problem. This was done by Morris and Saxton in \cite{MorrisSaxton} for even cycles and complete bipartite graphs. For theta graphs and complete $r$-partite $r$-graphs, certain difficulties arise which we shall explain in this section.

Let $G^{(r)}(n,p)$ denote the random $r$-graph on $n$ vertices where each edge is present independently with probability $p$. For some fixed $r$-graph $H$, we denote by $\exn_r \left( G^{(r)}(n,p), H \right)$ the maximum number of edges of an $H$-free subgraph of $G^{(r)}(n,p)$. If $r=2$, we will drop the subscripts and superscripts. The following result provides lower bounds for all $p$. 

\begin{prop}\hfill
\label{prop:random-lowerbound}
\begin{enumerate}[(a)]
	\item Suppose that $\exn(n,\{C_3,C_4, \ldots, C_{2b}\}) = \Theta(n^{1+1/b})$. Then
	there is some positive constant $c = c(b)$ such that w.h.p.
	$\exn(G(n,p),C_{2b}) \geq cp^{1/b}n^{1+1/b}.$
	In particular, w.h.p. we have
	$\exn(G(n,p),\theta_{a,b}) \geq cp^{1/b}n^{1+1/b}$
	for every $a \geq 2$.
	\item Let $r \geq 2$ and $2 \leq a_1 \leq \ldots \leq a_r$, and suppose that $\exn_r(n,K_{a_1, \ldots, a_r}^{(r)})=\Theta \left(n^{r - 1/a_1 \cdots a_{r-1}}\right)$. Then, there is some positive constant $c=c(a_1,\ldots,a_r)$ such that w.h.p.\ 
	\[
	\exn_r(G^{(r)}(n,p),K_{a_1, \ldots, a_r}^{(r)}) \geq cp^{1 - 1/(a_1 \cdots a_{r-1})}n^{r - 1/a_1 \cdots a_{r-1}}.
	\]
\end{enumerate}	
	
\end{prop}

Morris and Saxton established Part (a) in \cite[Section 2.3]{MorrisSaxton}, while Part (b) can be obtained by adapting their construction.
In order to get good upper bound on the Tur\'an number $\exn_r \left( G^{(r)}(n,p), H \right)$, it is necessary to find the largest $\al >0$ for which $H$ is Erd\H os--Simonovits $\al$-good. To do so, one usually has to restrict the range of $k$ in \cref{def:Erdos--Simonovits-good}. Say that $H$ is Erd\H os--Simonovits $\al$-good for $m(n)$ up to $f(n)$, if the condition in \cref{def:Erdos--Simonovits-good} holds for every
$1\ll k \le f(n)$. Considering integers $a,b \ge 2$, one can easily deduce from \cref{thm:supersatcyclefree} that $\theta_{a,b}$ is  $\al$-Erd\H os--Simonovits good for $m(n) = n^{1+1/b}$ and $\al = 1/((a-1)b-1)$ up to $n^{(b-1)((a-1)b-1)/b(ab-1)}$. A standard application of the hypergraph container method (see \cite[Section 6]{MorrisSaxton}) then shows that,
for every $p \ge n^{-((a-1)(b-1)/(ab-1))}(\log n)^{2(a-1)b}$, we have w.h.p.\ $\exn(G(n,p),\theta_{a,b}) = O(p^{1/(a-1)b}n^{1+1/b})$. This matches the lower bound from \cref{prop:random-lowerbound} (a) only when $a=2$. Note that, in the case $a=2$, this recovers a result of Morris and Saxton \cite[Theorem 1.8]{MorrisSaxton}.

For complete $r$-partite $r$-graphs, the situation is very different. It is likely that \cref{thm:supersat-r-partite} is best possible and provides a matching upper bound for \cref{prop:random-lowerbound} (b) for large enough $p$. However, for $r >2$, finding the largest $\al$ so that $K_{a_1, \ldots, a_r}^{(r)}$ is $\al$-Erd\H os--Simonovits good up to some $f(n)$ from \cref{thm:supersat-r-partite} turns into a difficult optimisation problem we could not solve.

\noindent \textbf{Remark.} Recently, we have learned that Spiro and Verstra{\"e}te \cite[Theorem 1.3]{SpiroVerstraete} used our \cref{thm:supersat-r-partite} to essentially determine $\exn_r(G^{(r)}(n,p),K_{a_1, \ldots, a_r}^{(r)})$ in the regime $p \ge n^{-r/2}\log n$.

\section*{Acknowledgements}
This research was motivated by a series of lectures of Rob Morris at the Ramsey DocCourse programme in Prague 2016. The authors would like to thank him and the organisers of the course, Jaroslav Ne\v{s}et\v{r}il and Jan Hubi\v{c}ka. Furthermore, the authors would like to thank Peter Allen for his helpful comments on this text, and Joonkyung Lee for pointing out the reference \cite{Sidorenko93}. The authors appreciate suggestions from two anonymous referees which helped improve the exposition of the paper.

%%%%%%%%%%%%%%%%%%%%%%%%%%%%%%%%%%%%%%%%%%%%%%%%%%%%%
%%%%%%%%%%%%%%%%%%%%%%%%%%%%%%%%%%%%%%%%%%%%%%%%%%%%%

\setlength{\emergencystretch}{2em}
\providecommand{\bysame}{\leavevmode\hbox to3em{\hrulefill}\thinspace}
\providecommand{\MR}{\relax\ifhmode\unskip\space\fi MR }
% \MRhref is called by the amsart/book/proc definition of \MR.
\providecommand{\MRhref}[2]{%
	\href{http://www.ams.org/mathscinet-getitem?mr=#1}{#2}
}
\providecommand{\href}[2]{#2}

\appendix
\section{Proof of Proposition \ref{prop:supersat-count}}\label{sec:appendix-A}
The following proof is very similar to that of comparable statements given in \cite{MorrisSaxton,linear-cycles}.
We will make use of the hypergraph container method, developed in \cite{BaloghMorrisSamotij_Container,SaxtonThomason_Container}.
For an $s$-uniform hypergraph $\cH$, we define the \emph{maximum $j$-degree} $\De_j(\cH)$ of $\cH$ by
\[\De_j(\cH) = \max \{d_\cH(\sigma): \si \subset V(\cH) \text{ and } \card \si = j \}
\]
and the \emph{average degree} $d(\cH)$ of $\cH$ by $d(\cH)=s\card{E(\cH)}/\card{V(\cH)}$. Furthermore, for $\tau \in (0,1)$, the \emph{co-degree} function $\de(\cH,\tau)$ of $\cH$ is given by
\[
\de(\cH,\tau)=\frac{1}{d(\cH)}\sum_{j=2}^{s}\frac{\De_j(\cH)}{\tau^{j-1}}.
\]

\begin{thm}[see \cite{BaloghMorrisSamotij_Container,SaxtonThomason_Container}]\label{thm:containers}
	For each $s\in \N$, there exist positive constants $c_1=c_1(s)$ and $c_2=c_2(s)$ such that the following holds for all $N \in \N$. For each $0<\e<c_1$ and each $N$-vertex $s$-graph $\cH$, if $\tau \in (0,c_2)$ is such that $\de(\cH,\tau) \le \e$, then there exists a family $\cC$ of at most
	\begin{equation}\label{eq:number-containers}
		\exp\left(\frac{\tau\log(1/\tau)N}{\e}\right)
	\end{equation}
	subsets of $V(\cH)$ such that:
	\begin{itemize}
		\item[\normalfont (1)] for each independent set $I\subset V(\cH)$, there exists some $U\in\cC$ with $I\subset U$,
		\item[\normalfont (2)] $e(\cH[U]) \le \e e(\cH)$ for each container $U\in \cC$.
	\end{itemize}
\end{thm}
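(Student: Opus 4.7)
The plan is to reproduce the Saxton--Thomason construction, which for every independent set $I\subset V(\cH)$ produces a small ``fingerprint'' $S(I)\subset I$ together with a deterministic rule $S\mapsto U(S)$ for reading off a container. I would define $S$ and $U$ simultaneously via an algorithm that processes the vertices of $\cH$ in a carefully chosen order (for instance by iteratively updated degree in the residual hypergraph). At each step with current vertex $v$: if $v\in I$ and the residual degree of $v$ is still above a threshold $\mu=\Theta(\varepsilon\,d(\cH))$, then add $v$ to $S$, remove $v$ from the live vertex set, and also remove from the live set every vertex whose link-degree relative to $S$ has crossed a precomputed threshold; otherwise $v$ is declared ``safe'' and simply removed from further consideration. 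Since the decision at step $i$ depends only on $S\cap\{v_1,\ldots,v_i\}$ and on the fixed data of $\cH$, the map $S\mapsto U(S)$ is well-defined, and $I\subset U(S(I))$ by construction, which gives property (1).

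The two remaining things to verify are then (a) $|S(I)|\le\tau N$ for every independent set $I$, and (b) $e(\cH[U(S)])\le\varepsilon\,e(\cH)$ for every $S$ that the algorithm can output. For both I would set up a single potential, namely a weighted count of the currently surviving edges of $\cH$, and prove that each addition of a vertex to $S$ decreases the potential by at least $\mu$, while the total initial potential is bounded by $\mu\tau N$. This forces $|S(I)|\le\tau N$ automatically and simultaneously guarantees (b) once the algorithm terminates. The co-degree hypothesis
\[
\delta(\cH,\tau)=\frac{1}{d(\cH)}\sum_{j=2}^{s}\frac{\Delta_j(\cH)}{\tau^{j-1}}\le\varepsilon
\]
is used exactly here: the weighting $\tau^{-(j-1)}$ is calibrated so that, each time a vertex is added to $S$ and the algorithm forbids configurations via $j$-wise co-degrees, the resulting ``collateral'' loss in the potential is absorbed by the $j$-th summand of $\delta(\cH,\tau)$, so the potential inequality closes for all $j\in\{2,\ldots,s\}$ at once.

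Given (a), counting containers is elementary: the number of possible fingerprints $S\subset V(\cH)$ with $|S|\le\tau N$ is
\[
\sum_{j=0}^{\lfloor \tau N\rfloor}\binom{N}{j}\le \exp\!\bigl(\tau N\log(e/\tau)\bigr)\le\exp\!\left(\frac{\tau\log(1/\tau)N}{\varepsilon}\right),
\]
where the second inequality uses that $\varepsilon<c_1$ is sufficiently small; this matches \eqref{eq:number-containers}. The main obstacle is the simultaneous balancing in the algorithm: the rule for ``how many extra vertices to forbid when a new vertex enters $S$'' has to be delicate enough that each iteration decreases the potential by at least $\mu$ (needed for (a)) but does not shrink $U$ so quickly that the inclusion $I\subset U(S(I))$ is violated or that (b) is destroyed. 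Getting this rule right, and carrying out the potential bookkeeping separately for each $j\in\{2,\ldots,s\}$ so that the weight $\Delta_j(\cH)/\tau^{j-1}$ exactly accounts for the $j$-th order damage, is the technical heart of the container theorem and is where one either invokes the Saxton--Thomason ``scoring'' algorithm or the Balogh--Morris--Samotij random-scrambling variant.
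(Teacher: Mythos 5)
There is nothing in the paper to compare your argument against: \cref{thm:containers} is not proved in this paper at all, it is imported verbatim from the container papers of Balogh--Morris--Samotij and Saxton--Thomason, and the appendix only \emph{applies} it (via \cref{prop:supersat-count-base}). So the only question is whether your sketch would stand as a self-contained proof of the container theorem, and as written it would not. The two claims that carry all the weight --- (a) $|S(I)|\le\tau N$ for every independent $I$, and (b) $e(\cH[U(S)])\le\e\, e(\cH)$ for every reachable fingerprint $S$ --- are asserted via ``a single potential that drops by at least $\mu$ per fingerprint vertex,'' but you never exhibit the potential, the removal rule, or the inequality, and you explicitly concede that getting the rule right ``is the technical heart'' and is where one ``invokes'' Saxton--Thomason or Balogh--Morris--Samotij. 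That is circular: the technical heart is precisely the theorem being proved. In the actual proofs this step is not a one-pass, single-potential argument; BMS proceed by induction on the uniformity $s$ (building containers for the link hypergraphs of lower uniformity), and ST run a multi-round online algorithm with separate degree measures for each $j\in\{2,\dots,s\}$, exactly because the naive bookkeeping double-counts: when a vertex enters $S$, the edges through it that you want to ``charge'' may already have lost other vertices to $S$, and controlling that multiplicity is where the terms $\De_j(\cH)/\tau^{j-1}$ enter. Your sketch names this calibration but does not perform it.

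There is also a concrete gap on property (2): your algorithm adds $v$ to $S$ only when $v\in I$ \emph{and} its residual degree exceeds $\mu$, and otherwise just marks vertices safe. If $I$ happens to contain few high-degree vertices, the algorithm terminates with a small $S$ and a container $U$ from which nothing forces $e(\cH[U])\le\e\, e(\cH)$; in the real arguments the container is trimmed to consist (apart from the fingerprint) of vertices whose degree into a suitable residual structure is small, and even then (2) in this strong form needs a further argument or iteration. Finally, the counting step you do carry out is fine as far as it goes ($\sum_{j\le\tau N}\binom{N}{j}\le\exp(\tau N\log(e/\tau))\le\exp(\tau\log(1/\tau)N/\e)$ for $\tau<c_2$ and $\e\le 1/2$), but it is conditional on the unproven bound $|S(I)|\le\tau N$, and in the cited theorems the fingerprint bound comes with $s$-dependent constants that your exponent would then have to absorb. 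In short: the outline points at the right construction, but every step that distinguishes the container theorem from a routine compression argument is left to the very references the paper cites, so this cannot replace the citation.
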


We shall establish \cref{prop:supersat-count} through iterated applications of the following consequence of Theorem \ref{thm:containers}.

\begin{prop}\label{prop:supersat-count-base}
	Suppose that an $r$-graph $H$ is Erd\H{o}s-Simonovits $\al$-good for $m=m(n)$. Then there exist positive constants $\e$ and $k_0$ such that the following holds for all $n,k \in \N$ with $k \geq k_0$. Given an $r$-graph $G$ on $[n]$ with $e(G)=k\cdot m(n)$, there exists a collection $\cC(G)$ of at most
	\[
	\exp\left(O(k^{-\al} \log k \cdot m(n))\right)
	\]
	subgraphs of $G$ satisfying:
	\begin{itemize}
		\item[\normalfont (1)] Every $H$-free subgraph of $G$ is a subgraph of some $U\in \cC$,
		\item[\normalfont (2)] $e(U) \le (1-\e)e(G)$ for every $U\in \cC$.
	\end{itemize}  
\end{prop}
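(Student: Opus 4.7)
The plan is to apply the hypergraph container theorem (\cref{thm:containers}) to an auxiliary $s$-uniform hypergraph $\Gamma$ built from the copies of $H$ guaranteed by the Erd\H{o}s--Simonovits $\al$-good property. Concretely, given $G$ on $[n]$ with $e(G) = km(n)$, invoke the good property (with $k_0$ large enough to absorb the threshold in \cref{def:Erdos-Simonovits-good}) to produce a non-empty family $\widetilde{\cH}$ of copies of $H$ in $G$ satisfying $d_{\widetilde{\cH}}(\si) \le C\card{\widetilde{\cH}}/\bigl(k^{(1+\al)(\card\si-1)} e(G)\bigr)$. Let $\Gamma$ be the $s$-uniform hypergraph with $s = e(H)$, vertex set $V(\Gamma) = E(G)$, and edge set $\widetilde{\cH}$.

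First I would translate the good-family bounds into degree information for $\Gamma$. From $\De_j(\Gamma) \le C\card{\widetilde{\cH}}/\bigl(k^{(1+\al)(j-1)} e(G)\bigr)$ and $d(\Gamma) = s\card{\widetilde{\cH}}/e(G)$, one obtains
\[
\de(\Gamma,\tau) \le \frac{C}{s}\sum_{j=2}^{s}\frac{1}{(k^{1+\al}\tau)^{j-1}}.
\]
Fix $\e_0 \in (0,c_1)$ small, and set $\tau := C_0/k^{1+\al}$ where $C_0 = C_0(\e_0, s, C)$ is large enough that the above sum is at most $\e_0$. For $k \ge k_0$ sufficiently large, $\tau \in (0,c_2)$ as well, so \cref{thm:containers} supplies a family $\cC(G)$ of subsets of $E(G)$ with $e(\Gamma[U]) \le \e_0\card{\widetilde{\cH}}$ for each $U \in \cC(G)$, of cardinality
\[
\card{\cC(G)} \le \exp\!\left(\frac{\tau\log(1/\tau)\, e(G)}{\e_0}\right) = \exp\!\bigl(O(k^{-\al}\log k \cdot m(n))\bigr),
\]
using $\tau\log(1/\tau) = O(k^{-(1+\al)}\log k)$ and $e(G) = km(n)$. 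Property~(1) is then immediate: every $H$-free subgraph $G' \subseteq G$ has $E(G')$ independent in $\Gamma$ (since no member of $\widetilde{\cH}$ lies inside $G'$) and so is contained in some $U \in \cC(G)$.

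For property~(2) I would run a short double-counting argument that turns the container bound $e(\Gamma[U]) \le \e_0 \card{\widetilde{\cH}}$ into an edge bound on $U$. Set $\e := (1-\e_0)/(2C)$ and suppose, toward a contradiction, that some $U \in \cC(G)$ satisfies $e(U) > (1-\e)e(G)$. Then $E(G)\setminus U$ has size at most $\e e(G)$, and the single-edge bound $\De_1(\Gamma) \le C\card{\widetilde{\cH}}/e(G)$ implies that at most $C\e \card{\widetilde{\cH}} = (1-\e_0)\card{\widetilde{\cH}}/2$ members of $\widetilde{\cH}$ contain an edge of $E(G)\setminus U$. Hence at least $(1+\e_0)\card{\widetilde{\cH}}/2 > \e_0\card{\widetilde{\cH}}$ members of $\widetilde{\cH}$ lie entirely in $U$, contradicting the container property. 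The main obstacle I expect is choosing $\tau$ to balance two competing needs: it must be small enough that the container count comes out to $\exp(O(k^{-\al}\log k \cdot m))$, yet large enough that $k^{1+\al}\tau$ is bounded away from zero so $\de(\Gamma,\tau) \le \e_0$. The scaling $\tau \sim k^{-(1+\al)}$ is exactly the one that makes both constraints meet.
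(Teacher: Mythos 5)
Your proposal is correct and mirrors the paper's own proof almost exactly: both view the good family of copies of $H$ as an $e(H)$-uniform hypergraph on $E(G)$, pick $\tau$ of order $k^{-(1+\al)}$ to make the co-degree function small, invoke \cref{thm:containers}, and then use the $j=1$ degree bound $\De_1(\cH)\le C\card{\cH}/e(G)$ in a double-count to convert the container condition $e(\cH[U])\le\e_0 e(\cH)$ into $e(U)\le(1-\e)e(G)$. The only cosmetic difference is that the paper uses a single $\e$ for both roles (which forces $\e\lesssim 1/(C+1)$), whereas you separate $\e_0$ from $\e=(1-\e_0)/(2C)$, which is equally valid since the proposition only asks for some positive $\e$.
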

\begin{proof}
	Since $H$ is Erd\H{o}s-Simonovits $\al$-good for $m=m(n)$, there exists a constant $C>0$ and a (non-empty) collection $\cH$ of copies of $H$ in $G$ such that
	\begin{equation}\label{eq:containers-1}
		d_{\cH}(\si) \le \frac{C\cdot \card{\cH}}{k^{(1+\al)(\card{\si}-1)}e(G)} \quad \text{for every $\si\subset E(G)$ with $1\le \card{\si}\le e(H)$}.
	\end{equation}
	We will now think of $\cH$ as a hypergraph whose vertex set is $E(G)$ and whose edges are the copies of $H$ in $\cH$.
	Set $1/\tau=\e^2k^{1+\al}$ and observe that, if $\e$ is sufficiently small,
	\begin{align*}
		\de(\cH,\tau)=\frac{1}{d(\cH)}\sum_{j=2}^{e(H)}\frac{\De_j(\cH)}{\tau^{j-1}}& \overset{\eqref{eq:containers-1}}{\le} \frac{1}{d(\cH)}\sum_{j=2}^{e(H)}\e^{2(j-1)}k^{(1+\al)(j-1)}\cdot \frac{C\cdot \card{\cH}}{k^{(1+\al)(j-1)}e(G)}\\
		&= \frac{C}{e(H)}\sum_{j=2}^{e(H)}\e^{2(j-1)} \le 2C\e^2/e(H) \le \e.
	\end{align*}	
	Using Theorem \ref{thm:containers}, we thus obtain a collection $\cC(G)$ of at most
	\[
	\exp\left(\frac{\tau\log(1/\tau)e(G)}{\e}\right) \le \exp\left(O(k^{-\al} \log k \cdot m(n))\right)
	\]
	subsets of $V(\cH)=E(G)$ such that:
	\begin{itemize}
		\item[\normalfont (1')] Every $H$-free subgraph of $G$ is a subgraph of some $U\in \cC(G)$, and
		\item[\normalfont (2')] $e(\cH[U]) \le \e e(\cH)$ for all $U\in \cC(G)$.
	\end{itemize}
	The only thing that remains to prove is that $e(U)\le (1-\e)e(G)$ for every $U\in \cC$. Consider an arbitrary container $U\in \cC$. From \eqref{eq:containers-1} we find
	\[
	\card{E(\cH)\setminus E(\cH[U])} \le \card{V(\cH)\setminus U}\cdot \frac{C\cdot e(\cH)}{e(G)}.
	\]
	On the other hand, it follows from condition (2') that $\card{E(\cH)\setminus E(\cH[U])} \ge (1-\e)e(\cH)$. Hence $\card{V(\cH)\setminus U} \ge (1-\e)e(G)/C \ge \e e(G)$, as desired.
\end{proof}	

We are now ready to prove Proposition \ref{prop:supersat-count}.

\begin{proof}[Proof of Proposition \ref{prop:supersat-count}]	
	We wish to estimate the number of $H$-free subgraphs of $K^{(r)}_n$. We define a sequence $\{k(i)\}_{i=1}^{t}$ of positive reals, and a sequence $\{\cC_i\}_{i=1}^{t}$ of families of $r$-graphs as follows. Let $\e$ and $k_0$ be positive constants given by Proposition \ref{prop:supersat-count-base}. We set $k(1)=\binom{n}{r}/m(n)$ and define $k(i)=(1-\e)k(i-1)$, with $k(t)$ being the first term of this sequence to satisfy $k(t) \le k_0$. We take $\cC_0=\{K^{(r)}_n\}$, and for $1 \le i \le t$, we obtain $\cF_i$ from $\cC_{i-1}$ by replacing each $r$-graph $G\in \cC_{i-1}$ for which $e(G) \ge k(i)\cdot m(n)$ by the collection $\cC(G)$ of its subgraphs guaranteed by Proposition \ref{prop:supersat-count-base}.
	
	Let $\cC=\cC_t$. Clearly, every $H$-free $r$-graph on $[n]$ is contained in some $G\in \cC$. Moreover, $e(G) \le k_0\cdot m(n)$ for every $G\in \cC$. Finally, from \eqref{eq:number-containers} we see that
	\[
	\card{\cF} \le \exp\left(\sum_{i=1}^{t} O(1) \cdot k(i)^{-\al} \log k(i) \cdot m(n)\right) \le \exp\left(O(1) \cdot k_0^{-\al} \log k_0 \cdot m(n)\right).
	\]
	
	Therefore, the number of $H$-free $r$-graphs on $[n]$ is at most
	\[
	\sum_{G\in \cC}2^{e(G)} \le \card{\cC}2^{k_0\cdot m(n)} \le \exp\left(O(1) \cdot k_0^{-\al} \log k_0 \cdot m(n) + k_0\cdot m(n)\right)=2^{O(m(n))}.
	\]   
	Finally, given any $ \de >0 $, the number of $H$-free $r$-graphs with $n$ vertices and less than $m(n)/k_0^3$ edges is bounded from above by
	\[
	\sum_{G\in \cC}\sum_{i=1}^{m(n)/k_0^3}\binom{e(G)}{i} \le \card{\cC}2^{m(n)/k_0} \le \exp\left(k_0^{-\al/2}m(n)+m(n)/k_0\right) \leq 2^{\de\cdot m(n)}
	\]
	if $k_0$ is large enough.
\end{proof}

\section{\texorpdfstring{Deriving \cref{thm:supersat-compl} from \cref{thm:supersat-r-partite}}{Deriving the main supersaturation theorem}}
\label{sec:appendix-B}
In this section we will deduce \cref{thm:supersat-compl} from \cref{thm:supersat-r-partite}.
Obviously, property (i) in \cref{thm:supersat-compl} follows from property (a) in \cref{thm:supersat-r-partite}. It remains to verify property (ii) in \cref{thm:supersat-r-partite}. 
Without loss of generality we can assume that $\sigma$ is a complete $r$-partite $r$-graph. In light of property (b), our task becomes to justify that
\begin{equation}\label{eq:est-D}	
D^{(b_1,\ldots,b_r)}(\de,k,n) \le \frac{C\cdot |\cH|}{k^{(1+\al)(b_1\cdots b_r-1)}e(G)}
\end{equation}
for every $(b_1,\ldots, b_r) \in \mathbb{N}^{r}$ satisfying $b_1 \le a_1, \ldots, b_r \le a_r$. From \eqref{eq:D}, we see that
\begin{equation*}
D^{(b_1,\ldots,b_r)}(\de,k,n) =O(1) \cdot k^{\be}n^{\ga},
\end{equation*}
where $\be=\sum\limits_{i=1}^{r}(a_i-b_i)a_1\cdots a_{i-1}$ and $\ga=\sum\limits_{i=1}^{r-1}(a_i-b_i)(1-1/a_i\cdots a_{r-1})$.
On the other hand, by the assumptions on $G$ and $\cH$, we obtain
\begin{equation*}
\frac{|\cH|}{k^{(1+\al)(b_1\cdots b_r-1)}e(G)}=\Omega(1)\cdot k^{\be'}n^{\ga'}
\end{equation*}
where $\be'=a_1\cdots a_r-(1+\al)(b_1\cdots b_r-1)-1$ and $\ga'=a_1+\ldots +a_{r-1}-r+1/a_1\cdots a_{r-1}$. As $\ga'=\sum\limits_{i=1}^{r-1}(a_i-1)(1-1/a_i\cdots a_{r-1})$, we get
\begin{equation}\label{eq:ga}
	\ga'-\ga=\sum\limits_{i=1}^{r-1}(b_i-1)(1-1/a_i\cdots a_{r-1}) \ge 0.
\end{equation}
We next show that
\begin{equation}\label{eq:be+ga-1}	
	(\be'-\be)+a_1\cdots a_{r-1} \cdot (\ga'-\ga) \ge 0.
\end{equation}
From \eqref{eq:ga}, \eqref{eq:be+ga-1} and the fact that $n \ge k^{a_1\cdots a_{r-1}}$, we find
\begin{align*}
	k^{\be'}n^{\ga'} &= k^{\be}n^{\ga} \cdot (k^{-a_1\cdots a_{r-1}}n)^{\ga'-\ga} \cdot k^{(\be'-\be)+a_1\cdots a_{r-1} \cdot (\ga'-\ga) } \ge k^{\be} n^{\ga},
\end{align*}
implying \eqref{eq:est-D} for $C$ sufficiently large.

In the remainder of this section, we shall justify \eqref{eq:be+ga-1}.
Observe that
\begin{align*}
	\be'&=(a_1\cdots a_r-1)-(b_1\cdots b_r-1)-(b_1\cdots b_r-1)\al\\
	&=\sum_{i=1}^{r}(a_i-1)a_1\cdots a_{i-1}-\sum_{i=1}^{r}(b_i-1)b_1\cdots b_{i-1}-(b_1\cdots b_r-1)\al.
\end{align*}
From this it follows that
\begin{align*}
	\be'-\be &=\sum_{i=1}^{r}(b_i-1)(a_1\cdots a_{i-1}-b_1\cdots b_{i-1})-(b_1\cdots b_r-1)\al.
\end{align*}
Combined with \eqref{eq:ga}, this yields
\begin{align}\label{eq:be+ga-2}
\notag	(\be'-\be)+a_1\cdots a_{r-1} \cdot (\ga'-\ga) &=
	\sum_{i=1}^{r}(b_i-1)(a_1\cdots a_{i-1}-b_1\cdots b_{i-1})-(b_1\cdots b_r-1)\al \\
	& \quad + a_1\cdots a_{r-1} \cdot \sum_{i=1}^{r-1}(b_i-1)(1-1/a_i\cdots a_{r-1}).
\end{align}
If $b_1=\ldots =b_{r-1}=1$, then the RHS of \eqref{eq:be+ga-2} is at least $(b_r-1)(a_1\cdots a_{r-1}-1)-(b_r-1)\al \ge 0$. So \eqref{eq:be+ga-1} is valid in this case.

Now suppose $(b_1,\ldots,b_{r-1})\ne (1,\ldots,1)$. Since $1\le b_i \le a_i$ for every $i \in [r]$, we can bound the RHS of \eqref{eq:be+ga-2} from below by
\begin{align*}
	-(b_1\cdots b_r-1)\al + a_1\cdots a_{r-1} \cdot \sum_{i=1}^{r-1}(b_i-1)(1-1/a_i\cdots a_{r-1}) &\ge -(b_1\cdots b_r-1)\al +1\\
	&\ge 0,
\end{align*}
where in the second inequality we used the fact that $a_1\cdots a_{r-1} \cdot \sum\limits_{i=1}^{r-1}(b_i-1)(1-1/a_i\cdots a_{r-1})$ is a positive integer when $(b_1,\ldots,b_{r-1})\ne (1,\ldots,1)$, and in the last inequality we estimated $\al=\frac{1}{a_1\cdots a_r-1} \le \frac{1}{b_1\cdot b_r-1}$. Hence \eqref{eq:be+ga-1} is true in this case as well. This completes our proof.

\begin{thebibliography}{10}
	
	\bibitem{proj_norm_graphs}
	N.~Alon, L.~R\'onyai, and T.~Szab\'o, \emph{Norm-graphs: variations and
		applications}, J. Combin. Theory Ser. B \textbf{76} (1999), no.~2, 280--290.
	
	\bibitem{bal-bol-sim_conj}
	J.~Balogh, B.~Bollob\'as, and M.~Simonovits, \emph{The fine structure of
		octahedron-free graphs}, J. Combin. Theory Ser. B \textbf{101} (2011), no.~2,
	67--84.
	
	\bibitem{BaloghMorrisSamotij_Container}
	J.~Balogh, R.~Morris, and W.~Samotij, \emph{Independent sets in hypergraphs},
	J. Amer. Math. Soc. \textbf{28} (2015), no.~3, 669--709.
	
	\bibitem{linear-cycles}
	J.~Balogh, B.~Narayanan, and J.~Skokan, \emph{The number of hypergraphs without
		linear cycles},  J.  Combin.  Theory Ser. B \textbf{134} (2019),  309--321.
	
	\bibitem{BaloghSamotijKmm}
	J.~Balogh and W.~Samotij, \emph{The number of {$K_{m,m}$}-free graphs},
	Combinatorica \textbf{31} (2011), no.~2, 131--150.
	
	\bibitem{BaloghSamotijKst}
	\bysame, \emph{The number of {$K_{s,t}$}-free graphs}, J. Lond. Math. Soc. (2)
	\textbf{83} (2011), no.~2, 368--388.
	
	\bibitem{Benson}
	C.~T. Benson, \emph{Minimal regular graphs of girths eight and twelve}, Canad.
	J. Math. \textbf{18} (1966), 1091--1094.
	
	\bibitem{BondySimonovits}
	J.~A. Bondy and M.~Simonovits, \emph{Cycles of even length in graphs}, J.
	Combin. Theory Ser. B \textbf{16} (1974), 97--105.
	
	\bibitem{conlon-theta}
	D.~Conlon, \emph{Graphs with few paths of prescribed length between any two
		vertices}, Bull. Lond. Math. Soc. \textbf{51} (2019), 1015--1021.
	
	\bibitem{Erdos-compl-upperbound}
	P.~Erd\H{o}s, \emph{On extremal problems of graphs and generalized graphs},
	Israel J. Math. \textbf{2} (1964), 183--190.
	
	\bibitem{ErdosFranklRodl86}
	P.~Erd\H{o}s, P.~Frankl, and V.~R\"odl, \emph{The asymptotic number of graphs
		not containing a fixed subgraph and a problem for hypergraphs having no
		exponent}, Graphs Combin. \textbf{2} (1986), no.~1, 113--121.
	
	\bibitem{Erdos1965}
	P.~Erd\H{o}s and M.~Simonovits, \emph{A limit theorem in graph theory}, Studia Sci. Math. Hung, Citeseer.  \textbf{1} (1966), 51--57.
	
	\bibitem{Erdos1983}
	\bysame, \emph{Supersaturated graphs and hypergraphs}, Combinatorica \textbf{3}
	(1983), no.~2, 181--192.
	
	\bibitem{ErdosSimonovits}
	\bysame, \emph{Cube-supersaturated graphs and related problems}, Progress in
	graph theory ({W}aterloo, {O}nt., 1982), Academic Press, Toronto, ON, 1984,
	pp.~203--218.
	
	\bibitem{Erdos1946}
	P.~Erd\H{o}s and A.~H. Stone, \emph{On the structure of linear graphs}, Bull.
	Amer. Math. Soc. \textbf{52} (1946), no.~12, 1087--1092.
	
	\bibitem{theta_up-bound}
	R.~J. Faudree and M.~Simonovits, \emph{On a class of degenerate extremal graph
		problems}, Combinatorica \textbf{3} (1983), no.~1, 83--93.
	
	\bibitem{Ferber2017}
	A.~Ferber, G.~McKinley, and W.~Samotij, \emph{Supersaturated sparse graphs and
		hypergraphs}, Int. Math. Res. Not. \textbf{2} (2020), 378--402.
	
	\bibitem{Jiang2017}
	T.~Jiang and L.~Yepremyan, \emph{Supersaturation of even linear cycles in
		linear hypergraphs}, Comb. Probab. Comput. \textbf{29} (2020), 698--721.
	
	\bibitem{KleitmanWinston}
	D.~J. Kleitman and K.~J. Winston, \emph{On the number of graphs without
		{$4$}-cycles}, Discrete Math. \textbf{41} (1982), no.~2, 167--172.
	
	\bibitem{norm_graphs}
	J.~Koll\'ar, L.~R\'onyai, and T.~Szab\'o, \emph{Norm-graphs and bipartite
		{T}ur\'an numbers}, Combinatorica \textbf{16} (1996), no.~3, 399--406.
	
	\bibitem{KST}
	T.~K\"ovari, V.~T. S\'os, and P.~Tur\'an, \emph{On a problem of {K}.
		{Z}arankiewicz}, Colloquium Math. \textbf{3} (1954), 50--57.
	
	\bibitem{exnumber-compl-hyper}
	J.~Ma, X.~Yuan, and M.~Zhang, \emph{Some extremal results on complete
		degenerate hypergraphs}, J. Combin. Theory Ser. A \textbf{154} (2018),
	598--609.
	
	\bibitem{MorrisSaxton}
	R.~Morris and D.~Saxton, \emph{The number of ${C}_{2l}$-free graphs}, Adv.
	Math. \textbf{298} (2016), 534--580.
	
	\bibitem{MubayiWang}
	D.~Mubayi and L.~Wang, \emph{The number of triple systems without even cycles},
	Combinatorica \textbf{39} (2019), 679--704.
	
	\bibitem{hypergraph_non-deg}
	B.~Nagle, V.~R\"odl, and M.~Schacht, \emph{Extremal hypergraph problems and the
		regularity method, special edition for J. Nešetril}, Topics in Discrete Mathematics (M. Klazar, J. Kratochvíl, M. Loebl, J. Matoušek, R. Thomas, and P. Valtr, eds.), Algorithms Combin., vol. 26, Springer, 2006, pp. 247– 278.
	
	\bibitem{SaxtonThomason_Container}
	D.~Saxton and A.~Thomason, \emph{Hypergraph containers}, Invent. Math.
	\textbf{201} (2015), no.~3, 925--992.
	
     \bibitem{Sidorenko93}
     A.~Sidorenko, \emph{A correlation inequality for bipartite graphs}, Graphs and Combinatorics \textbf{9} (1993), 201--204.	
	
	\bibitem{Singleton}
	R.~Singleton, \emph{On minimal graphs of maximum even girth}, J. Combin. Theory
	\textbf{1} (1966), 306--332.
	
	\bibitem{SpiroVerstraete} 
	S.~Spiro and J.~Verstra{\"e}te, \emph{Relative Tur\'{a}n Problems for Uniform Hypergraphs}, submitted. arXiv preprint arXiv:   2009.02416.
	
	\bibitem{Wenger}
	R.~Wenger, \emph{Extremal graphs with no {$C^4$}'s, {$C^6$}'s, or
		{$C^{10}$}'s}, J. Combin. Theory Ser. B \textbf{52} (1991), no.~1, 113--116.
	
\end{thebibliography}
\end{document}